\theoremstyle{plain}
\newtheorem*{thm*}{Theorem}
\newtheorem{thm}{Theorem}
\newtheorem{lem}[thm]{Lemma}
\newtheorem{cor}[thm]{Corollary}
\newtheorem{prob}{Problem}
\newtheorem{conj}{Conjecture}
\theoremstyle{definition}
\newtheorem{defn}[thm]{Definition}
\newtheorem*{notation}{Notation}
\newtheorem{claim}{Claim}
\newtheorem{claiminproof}{Claim}[thm]
\newcommand{\real}{\mathbb{R}}
\newcommand{\imag}{\mathbb{I}}
\newcommand{\complex}{\mathbb{C}}
\newcommand{\disk}{\mathbb{D}}
\newcommand{\id}{\mathrm{id}}
\newcommand{\diam}{\mathrm{diam}}
\newcommand{\proj}{\mathrm{proj}}
\newcommand{\sm}{\setminus}
\newcommand{\0}{\emptyset}
\newcommand{\e}{\varepsilon}
\newcommand{\len}{\mathsf{len}}
\newcommand{\midpt}{\mathsf{m}}
\newcommand{\lam}{\mathcal{L}}
\newcommand{\g}{\mathfrak{g}}
\newcommand{\h}[1]{\widehat{#1}}
\newcommand{\til}[1]{\widetilde{#1}}
\newcommand{\bol}[1]{\mathbf{#1}}
\newcommand{\hull}{\mathrm{Hull}}
\newcommand{\Equi}{\mathrm{Equi}}
\newcommand{\arcends}{\mathrm{Ends}}
\newcommand{\wE}{\widetilde{\mathrm{exp}}}
\begin{document}

\title{Extension of isotopies in the plane}
\author[Hoehn, Oversteegen, and Tymchatyn]{L. C. Hoehn \and L. G. Oversteegen \and E. D. Tymchatyn}

\address[L.\ C.\ Hoehn]{Nipissing University, Department of Computer Science \& Mathematics, 100 College Drive, Box 5002, North Bay, Ontario, Canada, P1B 8L7}
\email{loganh@nipissingu.ca}

\address[L.\ G.\ Oversteegen]{University of Alabama at Birmingham, Department of Mathematics, Birmingham, AL 35294, USA}
\email{overstee@uab.edu}

\address[E.\ D.\ Tymchatyn]{University of Saskatchewan, Department of Mathematics and Statistics, 106 Wiggins road, Saskatoon, Canada, S7N 5E6}
\email{tymchat@math.usask.ca}

\thanks{The first named author was partially supported by NSERC grant RGPIN 435518}
\thanks{The third named author was partially supported by NSERC grant OGP-0005616}

\subjclass[2010]{Primary 57N37, 54C20; Secondary 57N05, 54F15}
\keywords{isotopy, extension, plane, holomorphic motion}

\begin{abstract}
Let $A$ be any plane set. It is known that a holomorphic motion $h: A \times \disk \to \complex$ always extends to a holomorphic motion of the entire plane.  It was recently shown that any isotopy $h: X \times [0,1] \to \complex$, starting at the identity, of a plane continuum $X$ also extends to an isotopy of the entire plane.  Easy examples show that this result does not generalize to all plane compacta.  In this paper we will provide a characterization of isotopies of uniformly perfect plane compacta $X$ which extend to an isotopy of the entire plane.  Using this characterization, we prove that such an extension is always possible provided the diameters of all components of $X$ are uniformly bounded away from zero.
\end{abstract}

\maketitle

\section{Introduction} 
Denote the complex plane by $\complex$ and the open unit disk by $\disk$.  An \emph{isotopy} of a set $X \subset \complex$ is a homotopy $h: X \times [0,1] \to \complex$ such that for each $t \in [0,1]$, the function $h^t: X \to \complex$ defined by $h^t(x) = h(x,t)$ is an embedding (i.e.\ a homeomorphism of $X$ onto the range of $h^t$).

Suppose that $h: X \times [0,1] \to \complex$ is an isotopy of a compactum $X \subset \complex$ such that $h^0 = \id_X$.  We consider the old problem of when the isotopy $h$ can be extended to an isotopy of the entire plane \footnote{We are indebted to Professor R.\ D.\ Edwards who communicated a related problem to us.}.

A more restrictive variant of the notion of an isotopy is a holomorphic motion (see e.g.\ \cite{astamart01}).  The remarkable $\lambda$-Lemma \cite{slod91} states that any holomorphic motion of any plane set can be extended to a holomorphic motion of the entire plane.  See \cite{astamart01} for a different proof and some history of that problem.

Although the $\lambda$-Lemma holds for arbitrary plane  sets, some additional restrictions are needed for the existence of an extension of an isotopy to the entire plane $\complex$.  First, it is reasonable to restrict to isotopies of plane compacta.  This by itself is not enough since Fabel has shown that there exists an isotopy of a convergent sequence which cannot be extended over the plane (see \cite[p.~991]{fabe05}).  On the other hand, it was shown recently in \cite{ot10} that any isotopy of an arbitrary plane continuum $X$ can be extended over the plane.  In this case each complementary domain $U$ of $X$ is simply connected and, hence, there exists a conformal isomorphism $\varphi_U: \disk \to U$.  The proof made use of two key analytic results for these conformal isomorphisms: the Carath\'{e}odory kernel convergence theorem, and the Gehring-Hayman inequality for the diameters of hyperbolic geodesics in $U$.

Let us now consider the case when $X$ is a plane compactum.  Since we may assume that $X$ contains at least three points, the boundary of every complementary component $U$ of $X$ contains at least three points, so $U$ is hyperbolic, i.e.\ there exists an analytic covering map $\varphi_U: \disk \to U$ (see \cite{Ahlfors1973}).

There is an analogue of the Carath\'{e}odory kernel convergence theorem which holds for families of analytic covering maps (see \cref{sec:analytic covering maps}).  For an analogue of the Gehring-Hayman inequality, an additional geometric condition will be required:

\begin{defn}
A compact subset $X \subset \complex$ is \emph{uniformly perfect with constant $k$} provided there exists $0 < k < 1$ so that for all $r < \diam(X)$ and all $x \in X$,
\[ \{z \in \complex : kr \leq |z - x| \leq r\} \cap X \neq \0 .\]
\end{defn}

Clearly every uniformly perfect set is perfect and the standard ``middle-third'' Cantor set is uniformly perfect.  It is known that the Gehring-Hayman estimate on the diameter of hyperbolic geodesics still holds for very analytic covering map $\varphi_U: \disk \to U$ to a domain $U$ whose boundary is uniformly perfect (see \cref{sec:analytic covering maps} for details).

The main result in this paper is a characterization of isotopies $h: X \times [0,1] \to \complex$ of uniformly perfect plane compacta $X$ which can be extended over the entire plane (see \cref{main1}).  We use our characterization to prove that any isotopy of a plane compactum such that the diameter of every component is uniformly bounded away from zero can be extended over the plane (see \cref{main2}).  Along the way, we will provide simpler proofs of some of the technical results in \cite{ot10}.

\subsection{Notation}
\label{sec:notation}

By a \emph{map} we mean a continuous function.  For $z \in \complex$, the magnitude of $z$ is denoted $|z|$, so that the Euclidean distance between two points $z,w \in \complex$ is $|z - w|$.  Given $z_0 \in \complex$ and $r > 0$, denote
\[ B(z_0,r) = \{z \in \complex: |z_0 - z| < r\} .\]

By a \emph{domain} we mean a connected, open, non-empty set $U \subset \complex$.  If $X \subset \complex$ is closed, then a \emph{complementary domain} of $X$ is a component of $\complex \sm X$.  A \emph{crosscut} of a domain $U$ is an \emph{open arc} $Q$ (i.e.\ $Q \approx (0,1) \subset \real$) contained in $U$ such that $\overline{Q}$ is a \emph{closed arc} (i.e.\ $\overline{Q} \approx [0,1]$) whose endpoints are in $\partial U$.  Note that the endpoints of $\overline{Q}$ are required to be distinct.  In general, if $A$ is an open arc whose closure $\overline{A}$ is a closed arc, we may refer to the endpoints of $\overline{A}$ as the ``endpoints of $A$''.

A \emph{path} is a map $\gamma: [0,1] \to \complex$.  Given a domain $U$, we say $\gamma$ is a \emph{path in $U$} if $\gamma((0,1)) \subset U$.  Note that \emph{we allow the possibility that $\gamma(0) \in \partial U$ and/or $\gamma(1) \in \partial U$} -- we still call such a path a path in $U$.

We will make frequent use of covering maps in this paper.  Given a covering map $\varphi: V \to U$, where $V$ and $U$ are domains, a \emph{lift} of a point $x \in U$ is a point $\h{x} \in V$ such that $\varphi(\h{x}) = x$.  Similarly, if $\gamma$ is a path with $\gamma([0,1]) \subset U$ then a lift of $\gamma$ is a path $\h{\gamma}$ in $V$ such that $\varphi \circ \h{\gamma} = \gamma$.

The \emph{Hausdorff metric} $d_H$ measures the distance between two compact sets $A_1,A_2 \subset \complex$ as follows:
\[ d_H(A_1,A_2) = \max \{ \max_{z_1 \in A_1} \min_{z_2 \in A_2} |z_1 - z_2|, \; \max_{z_2 \in A_2} \min_{z_1 \in A_1} |z_1 - z_2| \} .\]
Equivalently, $d_H(A_1,A_2)$ is the smallest number $\e \geq 0$ such that $A_1$ is contained in the closed $\e$-neighborhood of $A_2$ and $A_2$ is contained in the closed $\e$-neighborhood of $A_1$.

Given an isotopy $h: X \times [0,1] \to \complex$, we denote $h^t = h|_{X \times \{t\}}$ and, for $x \in X$, we denote $x^t = h^t(x)$.

\section{Preliminaries}
\label{sec:preliminaries}

In this section we collect several tools which we use in this paper.  Many of these are standard analytical results; others are less well-known.

\subsection{Bounded analytic covering maps}
\label{sec:analytic covering maps}

It is a standard classical result (see e.g.\ \cite{Ahlfors1973}) that for any domain $U \subset \complex$ whose complement contains at least two points, and for any $z_0 \in U$, there is a complex analytic covering map $\varphi: \disk \to U$ such that $\varphi(0) = z_0$.  Moreover, this covering map $\varphi$ is uniquely determined by the argument of $\varphi'(0)$.

Many of the results below hold only for analytic covering maps $\varphi: \disk \to U$ to bounded domains $U$.  For the remainder of this subsection, let $U \subset \complex$ be a bounded domain, and let $\varphi_U=\varphi: \disk \to U$ be an analytic covering map.

\begin{thm}[Fatou \cite{Fatou06}]
\label{Fatou}
The radial limits $\lim_{r \to 1^-} \varphi(re^{i\theta})$ exist for all points $e^{i\theta}$ in $\partial \disk$ except possibly for a set of linear measure zero.
\end{thm}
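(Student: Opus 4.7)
The plan is to reduce the statement to the classical Fatou theorem for bounded analytic functions on $\disk$. Since $U$ is a bounded domain, the covering map $\varphi: \disk \to U$ is automatically a bounded analytic function on $\disk$; this is exactly the hypothesis of Fatou's original result, and indeed the only place boundedness of $U$ enters the argument.

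First, I would write $\varphi = u + iv$ with $u = \realpart \varphi$ and $v = \imagpart \varphi$. Both $u$ and $v$ are bounded harmonic functions on $\disk$, and the radial limit of $\varphi(re^{i\theta})$ exists precisely when the radial limits of $u(re^{i\theta})$ and $v(re^{i\theta})$ both exist. So it suffices to prove that for a bounded harmonic function $w$ on $\disk$, the radial limit $\lim_{r\to 1^-} w(re^{i\theta})$ exists for almost every $\theta$.

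For this, I would invoke the Poisson integral representation: any bounded harmonic function $w$ on $\disk$ is the Poisson integral of an $L^\infty(\partial \disk)$ boundary function $w^\ast$, i.e.
\[ w(re^{i\theta}) = \frac{1}{2\pi}\int_0^{2\pi} P_r(\theta - \psi)\, w^\ast(e^{i\psi})\, d\psi, \]
where $P_r$ is the Poisson kernel. Since $\{P_r\}_{r<1}$ is an approximate identity on $\partial \disk$, a standard Hardy-Littlewood maximal function argument yields $w(re^{i\theta}) \to w^\ast(e^{i\theta})$ as $r \to 1^-$ for almost every $\theta$: the set of bad $\theta$ can be controlled by the weak-type $(1,1)$ bound for the maximal operator applied to $w^\ast \in L^\infty \subset L^1$.

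The main technical obstacle is the Poisson integral representation for bounded harmonic functions, which is typically established either by Herglotz's theorem or by a weak-$\ast$ compactness argument: the radial dilates $w_r(e^{i\theta}) = w(re^{i\theta})$ form a bounded family in $L^\infty(\partial \disk)$, and any weak-$\ast$ limit point $w^\ast$ can be shown to recover $w$ via the Poisson formula. Once this is in hand, applying the almost-everywhere convergence result separately to $u$ and $v$, and taking the (countable) union of the two exceptional null sets, completes the proof.
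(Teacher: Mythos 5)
The paper does not prove this statement; it is cited as a classical theorem of Fatou (1906), and your task here was essentially to reconstruct a known proof. Your argument is the standard one: decompose the bounded analytic covering map into its real and imaginary parts, observe that each is a bounded harmonic function, invoke the Poisson integral representation $w = P[w^\ast]$ with $w^\ast \in L^\infty(\partial\disk)$ (obtained, as you say, by weak-$\ast$ compactness of the radial dilates), and then apply the approximate-identity/maximal-function machinery to get almost-everywhere radial convergence of the Poisson integral to its boundary datum. This is correct, and it is exactly where boundedness of $U$ enters, since a covering map onto an unbounded domain need not be a bounded analytic function and Fatou's theorem can fail for it. Two minor remarks: the inclusion $L^\infty(\partial\disk) \subset L^1(\partial\disk)$ that you use is of course valid because $\partial\disk$ has finite measure (worth noting explicitly, since it is false on spaces of infinite measure); and the union of the two exceptional null sets is finite, not merely countable. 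Neither affects the correctness of the argument.
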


From now on, we will always assume that any bounded analytic covering map $\varphi: \disk \to U$ has been extended to be defined over all points $e^{i\theta} \in \partial \disk$ where the radial limit exists by $\varphi(e^{i\theta}) = \lim_{r \to 1^-} \varphi(re^{i\theta})$.  Note that the function $\varphi$ is not necessarily continuous at these points.

For this extended map $\varphi$, we extend the notion of lifts.  If $\gamma$ is a path in $U$ (recall this allows for the possibility that $\gamma(0)$ and/or $\gamma(1)$ belongs to $\partial U$), then a \emph{lift} of $\gamma$ is a path $\h{\gamma}$ in $\disk$ such that $\varphi \circ \h{\gamma} = \gamma$.  This means that if $\gamma(0) \in \partial U$, then $\h{\gamma}(0) \in \partial \disk$ and $\varphi$ is defined at the point $\h{\gamma}(0)$ (and $\varphi(\h{\gamma}(0)) = \gamma(0)$); and likewise for $\gamma(1)$ and $\h{\gamma}(1)$.

\begin{thm}[Riesz \cite{Riesz16, Riesz23}]
\label{Riesz}
For each $x \in \partial U$, the set of points $e^{i\theta}$ for which $\lim_{r\to 1^-} \varphi(re^{i\theta}) = x$ has linear measure zero in $\partial \disk$.
\end{thm}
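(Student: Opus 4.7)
The plan is to deduce this statement from the classical F.\ and M.\ Riesz uniqueness theorem for bounded analytic functions on the disk, which asserts that if $f \in H^\infty(\disk)$ is non-constant, then for every fixed value $c \in \complex$ the set of $e^{i\theta} \in \partial \disk$ at which the radial limit of $f$ exists and equals $c$ has linear measure zero. Since we are assuming the domain $U$ is bounded and $\varphi : \disk \to U$ is analytic, the map $\varphi$ lies in $H^\infty(\disk)$, so the theorem applies directly with $f = \varphi$ and $c = x$.

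Concretely, I would argue by contradiction. Fix $x \in \partial U$ and let
\[ E_x = \bigl\{ e^{i\theta} \in \partial \disk : \lim_{r \to 1^-} \varphi(re^{i\theta}) = x \bigr\}. \]
Assume $E_x$ has positive linear measure. Consider the function $g = \varphi - x$, which is bounded and analytic on $\disk$ and whose radial boundary values (existing a.e.\ by \cref{Fatou}) vanish on $E_x$. Invoking the F.\ and M.\ Riesz uniqueness theorem, $g \equiv 0$ on $\disk$, so $\varphi$ is the constant map to $x$. This contradicts the fact that $\varphi$ is a covering map onto the non-degenerate open set $U$.

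The only step that is not completely bookkeeping is the appeal to the F.\ and M.\ Riesz theorem itself, so the ``main obstacle'' is really just a citation; the proof given here is a direct specialization of that classical statement, with boundedness of $U$ ensuring that $\varphi$ is indeed in $H^\infty(\disk)$. No additional hypotheses on $U$ (such as connectedness of the boundary or uniform perfectness) are needed for this step.
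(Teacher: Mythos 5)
The paper does not prove this statement — it is cited directly to Riesz's original papers as a classical result. Your derivation is correct: reducing to the $H^\infty$ uniqueness theorem (a nontrivial bounded analytic function cannot have radial boundary values vanishing on a set of positive linear measure) by considering $g = \varphi - x$ is the standard argument, and the contradiction with $\varphi$ being a covering map onto a nondegenerate open set is exactly right.
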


The next result about lifts of paths is very similar to classical results for covering maps.  Since our extended map $\varphi_U$ is not a covering map at points in $\partial \disk$, we include a proof for completeness.

\begin{thm}
\label{lift}
Suppose $\gamma$ is a path in $U$ such that $\gamma((0,1]) \subset U$.  Let $\h{z} \in \disk$ be such that $\varphi(\h{z}) = \gamma(1)$.  Then there exists a unique lift $\h{\gamma}$ of $\gamma$ with $\h{\gamma}(1) = \h{z}$.

In particular, if $\gamma(0) \in \partial U$, then $\h{\gamma}(0) \in \partial \disk$, $\varphi$ is defined at $\h{\gamma}(0)$ (i.e.\ the radial limit of $\varphi$ exists there), and $\varphi(\h{\gamma}(0)) = \gamma(0)$.
\end{thm}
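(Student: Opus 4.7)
The plan is to first lift $\gamma|_{(0,1]} : (0,1] \to U$ via standard covering-space path lifting, obtaining a unique continuous $\h{\gamma} : (0,1] \to \disk$ with $\h{\gamma}(1) = \h{z}$, and then to extend $\h{\gamma}$ continuously to $t = 0$. If $\gamma(0) \in U$, standard covering-space theory directly extends $\h{\gamma}$ continuously to $[0,1]$ with $\varphi(\h{\gamma}(0)) = \gamma(0)$. So assume $\gamma(0) \in \partial U$, and let $K = \bigcap_{s > 0} \overline{\h{\gamma}((0,s])}$ be the cluster set of $\h{\gamma}$ at $0^+$. Then $K$ is nonempty, compact, and connected (as an intersection of nested compact connected sets); further, $K \subset \partial \disk$, since any cluster point $\h{w} \in \disk$ would force $\varphi(\h{w}) = \gamma(0) \in \partial U$ by continuity, contradicting $\varphi(\disk) \subseteq U$. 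Hence $K$ is either a single point or a closed arc in $\partial \disk$.

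The main step is to rule out the arc case. Suppose $K = \{e^{i\theta} : \theta \in [\alpha,\beta]\}$ with $\alpha < \beta$. Since $(\alpha,\beta)$ has positive linear measure, combining \cref{Fatou} and \cref{Riesz} yields $\theta^* \in (\alpha,\beta)$ such that $p := \lim_{r \to 1^-} \varphi(re^{i\theta^*})$ exists and $p \neq \gamma(0)$. Since $e^{i\alpha}, e^{i\beta} \in K$, I extract interleaved sequences $s_1 > t_1 > s_2 > t_2 > \cdots > 0$ decreasing to $0$ with $\h{\gamma}(s_n) \to e^{i\alpha}$ and $\h{\gamma}(t_n) \to e^{i\beta}$, and set $A_n = \h{\gamma}([t_n,s_n])$. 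For each $\delta > 0$ and all $n$ sufficiently large, $A_n$ lies in the thin annular sector $S_\delta = \{re^{i\theta} : 1-\delta < r < 1,\; \theta \in [\alpha-\delta, \beta+\delta]\}$: otherwise subsequential limits of points in $A_n \setminus S_\delta$ would produce a cluster point of $\h{\gamma}$ either inside $\disk$ (impossible) or in $\partial \disk \setminus K$ (impossible by the definition of $K$). Because $S_\delta$ has angular width less than $2\pi$, $\arg$ is single-valued on $S_\delta$, and by the intermediate value theorem $A_n$ meets the radial segment $\{re^{i\theta^*} : 0 < r < 1\}$ at a point $w_n = r_n e^{i\theta^*}$. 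A diagonal argument with $\delta \to 0$ then forces $r_n \to 1$, so $\varphi(w_n) \to p$ by the radial limit. However, $w_n = \h{\gamma}(u_n)$ for some $u_n \in [t_n,s_n]$ with $u_n \to 0$, so $\varphi(w_n) = \gamma(u_n) \to \gamma(0)$, forcing $p = \gamma(0)$, a contradiction.

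Hence $K = \{e^{i\theta_0}\}$ and $\h{\gamma}(t) \to e^{i\theta_0}$ as $t \to 0^+$. Lindel\"of's theorem, applied to the bounded analytic function $\varphi$ along the continuous curve $\h{\gamma}$ with $\varphi \circ \h{\gamma} \to \gamma(0)$, then gives that $\varphi$ has angular limit $\gamma(0)$ at $e^{i\theta_0}$; in particular the radial limit of $\varphi$ at $e^{i\theta_0}$ exists and equals $\gamma(0)$. Thus $\h{\gamma}$ extends continuously to $[0,1]$ with $\h{\gamma}(0) = e^{i\theta_0}$, and $\varphi(\h{\gamma}(0)) = \gamma(0)$ in the extended sense. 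Uniqueness of the whole lift follows from the standard covering-space uniqueness on $(0,1]$ together with continuity at $0$. The main obstacle is the middle paragraph, where the measure-theoretic input from Fatou--Riesz must be combined with a careful topological ``thin-sector plus intermediate value'' argument to rule out excursions of the arcs $A_n$ away from the assumed cluster arc $K$.
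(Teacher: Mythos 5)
Your proof is correct, and the core of it — ruling out a nondegenerate cluster continuum $K$ at $t=0^+$ — follows the same strategy as the paper: use Fatou to get a boundary point in the interior of $K$ where the radial limit exists, use the topological fact that the lift must sweep across any radius through the interior of $K$ infinitely often as $t\to 0$, and derive a contradiction with Riesz. You spell out the sweeping step more carefully (interleaved sequences, thin annular sectors, and the intermediate value theorem on the argument), whereas the paper asserts it in one line; your version is a genuine improvement in readability. You also phrase the Fatou--Riesz step as ``pick one $\theta^*$ with radial limit $\neq\gamma(0)$'' rather than the paper's ``every radial limit in $E$ equals $\gamma(0)$, contradicting Riesz,'' but these are the same argument read in opposite directions.

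Where you genuinely diverge is the final step. Once $K=\{e^{i\theta_0}\}$, the theorem still needs the \emph{radial limit} of $\varphi$ to exist at $e^{i\theta_0}$ and equal $\gamma(0)$; you get this in one stroke from Lindel\"of's theorem applied along the curve $\h{\gamma}$. The paper instead gives a hands-on geometric argument: for arbitrarily small $\rho$ it lifts the circular crosscut $C\subset\partial B(\gamma(0),\rho)$ of $U$ to a crosscut $\h{C}$ of $\disk$ whose ``small'' side contains both $e^{i\theta_0}$ and a terminal segment of the radius, forcing $\varphi$ along the radius into $B(\gamma(0),\rho)$. Your route is shorter and uniformly covers the limit-point and isolated-point cases (the paper treats these separately and leaves the isolated case to the reader), at the cost of importing Lindel\"of's theorem as an external black box; the paper's route is more self-contained and reuses the small-crosscut idea that appears again later (e.g.\ in \cref{smallup}). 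Both are valid; your middle paragraph is exactly the content the paper glosses over, and you handle it correctly.
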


\begin{proof}
We may assume that $\gamma(0) \in \partial U$.  Since $\varphi$ is a covering map, $\gamma|_{(0,1]}$ lifts to a path with initial point $\h{z}$ which compactifies on a continuum $K \subset \partial \disk$.  If $K$ is non-degenerate, then there exists by \cref{Fatou} a set $E$ of positive measure in the interior of $K$ so that for each $e^{i\theta} \in E$, the radial limit $\lim_{r \to 1^-} \gamma(r e^{i\theta})$ exists.  Since the graph of $\h{\gamma}$ compactifies on $K$ we can choose a sequence $s_i \to 1$ so that $\h{\gamma}(s_i)= r_i e^{i\theta}$ with $r_i \to 1$.  It follows that the radial limit $\lim_{r \to 1^-} \varphi^t(r e^{i\theta}) = \gamma(1)$ for each $e^{i\theta} \in E$, a contradiction with \cref{Riesz}.  Thus $K$ is a point $e^{i\theta}$.

If $\gamma(0)$ is a limit point of $\partial U$, then we can choose arbitrarily small $\rho > 0$ so that the circle $S(\gamma(0),\rho) = \partial B(\gamma(0),\rho)$ intersects $\partial U$, and $\varphi(0), \gamma(1) \notin B(\gamma(0),2\rho)$.  Let $C$ be the component of $S(\gamma(0),\rho) \sm \partial U$ so that the closure of the component of $\gamma([0,1]) \cap B(\gamma(0),\rho)$, which contains $\gamma(0)$, intersects $C$.  By the above, $C$ lifts to a crosscut $\h{C}$ of $\disk$ such that $e^{i\theta}$ is contained in the component $H$ of $\disk \sm \h{C}$ which does not contain $0$.  Since a terminal segment of the radial segment $\{r e^{i\theta}: 0 \leq r < 1\}$ is contained in $H$, and $\rho$ is arbitrarily small, it follows that $\varphi(\h{\gamma}(0)) = \lim_{r\to 1^-} \varphi(r e^{i\theta}) = \gamma(0)$ as required.

In the case that $\gamma(0)$ is an isolated point of $\partial U$ (this case will not be needed in this paper as all domains we consider will have perfect boundaries), a similar argument can be made by lifting a small circle in $U$ centered at $\gamma(0)$.  We leave this case to the reader.
\end{proof}

The next result is a variant of \cref{lift}, in which the base point of the path to be lifted is in the boundary of $U$.

In the case that the boundary of $U$ is uniformly perfect, we prove below in \cref{Hlift} a stronger result about lifting a homotopy under covering maps to a domain whose boundary is changing under an isotopy.  The present result can be obtained as a Corollary to \cref{Hlift} by using the identity isotopy.  We omit a proof for the non-uniformly perfect case, since we won't need it for this paper.

\begin{thm}
\label{lift from bd}
Suppose $\gamma$ is a path in $U$ such that $\gamma((0,1]) \subset U$ and $\gamma(0) \in \partial U$.  Let $\h{x} \in \partial \disk$ be such that $\varphi_U(\h{x}) = \gamma(0)$ and $\gamma$ is homotopic to the radial path $\varphi_U|_{\{r \h{x} \;:\; 0 \leq r \leq 1\}}$ under a homotopy in $U$ that fixes the point $\gamma(0)$.  Then there exists a lift $\h{\gamma}$ of $\gamma$ with $\h{\gamma}(0) = \h{x}$.  Moreover, if $\partial U$ is perfect, this lift $\h{\gamma}$ is unique.
\end{thm}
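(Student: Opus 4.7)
The strategy is to lift the given homotopy $H$ from $\gamma$ to the (reversed) radial path via $\varphi_U$ and extract $\h{\gamma}$ from one edge.  Parametrize $H:[0,1]_s\times[0,1]_t\to U\cup\{\gamma(0)\}$ so that $H(s,0)=\gamma(s)$, $H(s,1)=\varphi_U((1-s)\h{x})$, $H(0,t)=\gamma(0)$, and $H(s,t)\in U$ whenever $s>0$.  For each $\e>0$, the restriction $H|_{[\e,1]\times[0,1]}$ has its image in the open set $U$, so the classical homotopy lifting property for the covering $\varphi_U:\disk\to U$ gives a unique lift $\til{H}_\e:[\e,1]\times[0,1]\to\disk$ satisfying $\til{H}_\e(s,1)=(1-s)\h{x}$ (the canonical lift of the radial edge).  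Uniqueness of the lifts implies the $\til{H}_\e$ agree on overlaps and glue to $\til{H}:(0,1]\times[0,1]\to\disk$; set $\h{\gamma}(s):=\til{H}(s,0)$, which is a lift of $\gamma|_{(0,1]}$ under $\varphi_U$.

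The crux is to prove $\h{\gamma}(s)\to\h{x}$ as $s\to 0^+$, so that the assignment $\h{\gamma}(0)=\h{x}$ completes a continuous lift of $\gamma$.  Consider
\[
K:=\bigcap_{\e>0}\overline{\til{H}((0,\e]\times[0,1])}\subset\overline{\disk}.
\]
As a nested intersection of non-empty compact connected sets, $K$ is non-empty, compact, and connected.  Since $H(s,t)\to\gamma(0)\in\partial U$ uniformly in $t$ as $s\to 0^+$ (continuity of $H$ on the compact square, with $H(0,\cdot)\equiv\gamma(0)$) and $\varphi_U$ is continuous on $\disk$, no point of $K$ lies in $\disk$; hence $K\subset\partial\disk$, and the right edge gives $\h{x}\in K$.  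If $K$ were non-degenerate it would contain an arc of $\partial\disk$ of positive linear measure; by \cref{Fatou} the radial limits of $\varphi_U$ would exist almost everywhere on that arc, and applying Lindel\"of's theorem along the approach paths provided by $\til{H}$ would force each such limit to equal $\gamma(0)$, contradicting \cref{Riesz}.  Therefore $K=\{\h{x}\}$, and in particular $\h{\gamma}(s)\to\h{x}$.

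For uniqueness under the perfectness hypothesis, suppose $\h{\gamma}_1,\h{\gamma}_2$ are two such lifts.  By the uniqueness clause of \cref{lift}, each $\h{\gamma}_i|_{(0,1]}$ is determined by its value at any single $s_0>0$, so if they differed they would differ by a non-identity deck transformation $\sigma$ of $\varphi_U$, which extends to a M\"obius automorphism of $\overline{\disk}$ with $\sigma(\h{x})=\h{x}$.  Perfectness of $\partial U$ ensures $\gamma(0)$ is non-isolated in $\partial U$, ruling out the parabolic case (since parabolic fixed points of deck transformations correspond to isolated boundary points of $U$).  In the hyperbolic case, apply the crosscut construction from the proof of \cref{lift} to a sequence of small circles around $\gamma(0)$ meeting $\partial U$ (available by perfectness): the lifted crosscuts associated to $\h{\gamma}_1$ and $\h{\gamma}_2=\sigma\circ\h{\gamma}_1$ are two distinct lifts of the same crosscut in $U$, both with closures accumulating at $\h{x}$ and separating the respective approaches $\h{\gamma}_i(s)\to\h{x}$ from the rest of $\disk$; once the circles are small enough these two crosscut components must be disjoint and simultaneously nested about $\h{x}$, which is impossible, forcing $\sigma=\id$.

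The main obstacle is the collapse of $K$ to the single point $\h{x}$.  Connectedness of $K$ is immediate, but narrowing it from a possible non-degenerate arc down to $\{\h{x}\}$ is precisely where the analytic content (\cref{Fatou}, \cref{Riesz}, and Lindel\"of's theorem) enters; the same cluster of ideas ultimately powers the uniqueness argument via the perfectness assumption.
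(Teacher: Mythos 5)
The paper does not supply a direct proof of this theorem: it observes that for uniformly perfect $\partial U$ the existence of the lift follows by applying \cref{Hlift} with the identity isotopy (and hence ultimately rests on \cref{smallup} and the Gehring--Hayman estimate), and it \emph{explicitly omits} the non-uniformly-perfect case.  Your proof takes a genuinely different route: lift the given homotopy directly via classical covering-space theory on $(0,1]\times[0,1]$, and then collapse the boundary cluster set $K$ to a point using \cref{Fatou}, \cref{Riesz}, and Lindel\"of.  This is more elementary and, if the details are filled in, does not use uniform perfectness, so it would close the gap the paper deliberately leaves open.  That is real added value.

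The step that collapses $K$ to $\{\h{x}\}$ is, however, incomplete as written.  To invoke Lindel\"of at a point $e^{i\theta}\in K$ you need a \emph{curve} in $\disk$ converging to $e^{i\theta}$ along which $\varphi_U\to\gamma(0)$, and ``the approach paths provided by $\til{H}$'' do not obviously supply one: each $t$-slice $s\mapsto\til{H}(s,t)$ converges to a single boundary point by \cref{lift}, but these points need not exhaust $K$; and a curve built by concatenating pieces of $\til{H}$ through a sequence $\til{H}(s_n,t_n)\to e^{i\theta}$ merely \emph{accumulates} at $e^{i\theta}$, it does not a priori converge there.  The fix is to first run the radial-crossing argument from the proof of \cref{lift} on that concatenated curve to show its own cluster set on $\partial\disk$ is degenerate (hence equals $\{e^{i\theta}\}$), and only then invoke Lindel\"of.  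The hyperbolic case of your uniqueness argument also does not go through as stated: two distinct lifts of a small crosscut \emph{can} be disjoint and nested about $\h{x}$ (e.g.\ a crosscut and its $\sigma$-image when $\sigma$ is hyperbolic fixing $\h{x}$), so ``simultaneously nested, which is impossible'' needs justification it does not get.  A cleaner route to the same conclusion: a hyperbolic $\sigma$ fixing $\h{x}$ has an axis, a geodesic ending non-tangentially at $\h{x}$ whose $\varphi_U$-image is a closed geodesic lying in a compact subset of $U$; along the axis $\varphi_U$ winds forever and has no limit, which, by the non-tangential form of Lindel\"of's theorem for the bounded analytic map $\varphi_U$, contradicts the hypothesis that the radial limit $\varphi_U(\h{x})=\gamma(0)\in\partial U$ exists.
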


The \emph{hyperbolic metric} on the unit disk $\disk$ is given by the form $\frac{2|dz|}{1 - |z|^2}$, meaning that the length of a smooth path $\gamma: [0,1] \to \disk$ is $\int_0^1 \frac{2 |\gamma'(t)|}{1 - |\gamma(t)|^2} \,dt$.  The important property of the hyperbolic metric for us is that (hyperbolic) geodesics in $\disk$ are pieces of round circles or straight lines which cross the boundary $\partial \disk$ orthogonally.  Via the covering map $\varphi: \disk \to U$, we obtain the \emph{hyperbolic metric on $U$}, in which the length of a smooth path in $U$ is equal to the length of any lift of that path under $\varphi$ -- this length is independent of the choice of lift.  It is a standard result that the hyperbolic metric on $U$ is independent of the choice of covering map $\varphi: \disk \to U$.

\begin{thm}[Gehring-Hayman \cite{PR98,pomm02}]
\label{gehrhay}
Suppose $\partial U$ is uniformly perfect with constant $k$.  There exists a constant $K$ such that if $\h{g}$ is a hyperbolic geodesic in $\disk$ and $\h{\Gamma}$ is a curve with the same endpoints as $\h{g}$, then
\[ \diam(\varphi(\h{g})) \leq K \cdot \diam(\varphi(\h{\Gamma})) .\]

The constant $K$ depends only on $k$, not on the domain $U$ itself or on the choice of analytic covering map $\varphi$.
\end{thm}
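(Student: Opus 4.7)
The plan is to derive the Euclidean diameter inequality from the hyperbolic geodesic property of $\h{g}$, using the uniform perfectness hypothesis to bridge hyperbolic and Euclidean geometry on $U$. The main analytic input is the Beardon--Pommerenke equivalence: when $\partial U$ is uniformly perfect with constant $k$, the hyperbolic density $\lambda_U$ on $U$ satisfies $\lambda_U(w) \asymp 1/\mathrm{dist}(w,\partial U)$ with implicit constants depending only on $k$. Equivalently, the hyperbolic metric on $U$ is bi-Lipschitz to the quasihyperbolic metric $\int|dw|/\mathrm{dist}(w,\partial U)$ with constants depending only on $k$. Since $\varphi$ is a local hyperbolic isometry, $g := \varphi(\h{g})$ and $\Gamma := \varphi(\h{\Gamma})$ have the same hyperbolic lengths as $\h{g}$ and $\h{\Gamma}$, and the geodesic property of $\h{g}$ in $\disk$ gives $\ell_{\mathrm{hyp}}(g) \leq \ell_{\mathrm{hyp}}(\Gamma)$.

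The geometric heart of the argument is a Whitney-scale analysis. Set $R := \diam(\Gamma)$ and, for each integer $j \geq 0$, let $g_j$ denote the portion of $g$ on which $\mathrm{dist}(w,\partial U) \in (2^{-j-1}R,\,2^{-j}R]$. On $g_j$, the Beardon--Pommerenke lower bound converts hyperbolic length into Euclidean length at ratio of order $2^{-j}R$. Combining the geodesic minimality of $\h{g}$ with a quantitative form of uniform perfectness at scale $2^{-j}R$ --- which forbids $\partial U$ from leaving long corridors at that scale free of its own points --- one shows that each $g_j$ may be localized to a Euclidean $O(2^{-j}R)$-neighborhood of $\Gamma$ and that $g$ may revisit any given dyadic scale only boundedly often (otherwise it could be shortened in hyperbolic length, violating the geodesic property). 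Summing the Euclidean length contributions over $j$ yields $\mathrm{length}(g) \leq C(k)\, R$, and hence $\diam(g) \leq \mathrm{length}(g) \leq C(k)\,\diam(\Gamma)$.

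The main obstacle is the scale-by-scale localization. In a simply connected domain, the Koebe distortion theorem provides this scale estimate essentially for free, since $|\varphi'(z)|(1-|z|^2) \asymp \mathrm{dist}(\varphi(z),\partial U)$ automatically. In the multiply connected setting, uniform perfectness is genuinely needed --- both to upgrade hyperbolic density into the reciprocal of Euclidean distance (Beardon--Pommerenke) and to rule out large-Euclidean-diameter, small-hyperbolic-length pockets of $U$ through which $g$ could escape $\Gamma$ cheaply. Tracking the constants through the Whitney sum produces the required uniform dependence of $K$ on $k$ alone.
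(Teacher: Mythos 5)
The paper does not prove this statement --- it is quoted as the Gehring--Hayman theorem for uniformly perfect boundaries with a citation to Pommerenke--Rohde and Pommerenke \cite{PR98,pomm02} --- so there is no internal proof to compare against.

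Your sketch correctly identifies the analytic backbone used in the literature: the Beardon--Pommerenke estimate that, when $\partial U$ is uniformly perfect with constant $k$, the hyperbolic density on $U$ is comparable to $1/\mathrm{dist}(\cdot,\partial U)$ with constants depending only on $k$, followed by a Whitney-scale decomposition of $g := \varphi(\h{g})$ by dyadic distance to $\partial U$. Two problems keep this from being a proof. First, the reduction to $\ell_{\mathrm{hyp}}(g)\le\ell_{\mathrm{hyp}}(\Gamma)$ is vacuous precisely where this paper invokes the theorem: when the common endpoints of $\h{g}$ and $\h{\Gamma}$ lie on $\partial\disk$, as they do in the crosscut applications (for instance in the proof of \cref{gtconverge}, where the comparison geodesic runs to the boundary point $a_n^{t_k}$), both hyperbolic lengths are $+\infty$, so geodesic minimality gives no numerical inequality and the step ``otherwise it could be shortened in hyperbolic length'' has nothing to shorten. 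Second, and more fundamentally, the two claims that drive your Whitney sum --- that each $g_j$ lies in a Euclidean $O(2^{-j}R)$-neighborhood of $\Gamma$, and that $g$ revisits any given dyadic scale only boundedly often --- are essentially the conclusion of the theorem restated scale-by-scale; they do not follow from uniform perfectness together with geodesic minimality by the heuristic given (``uniform perfectness forbids long empty corridors''). The published arguments first establish a pointwise localization of the geodesic relative to the competing curve by a separate geometric input (modulus estimates for separating ring domains, or an equivalent), and only then carry out the length or diameter sum. As written, your sketch assumes at each scale the localization it is trying to prove globally.
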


We end this subsection with a discussion of analytic covering maps of varying domains in the plane.  We will make use of the notion of Carath\'{e}odory kernel convergence, which was introduced by Carath\'{e}odory for univalent analytic maps in \cite{cara12}, then extended by Hejhal to the case of analytic covering maps.

Let $U_1,U_2,\ldots$ and $U_\infty$ be domains and let $z_1,z_2,\ldots$ and $z_\infty$ be points with $z_n \in U_n$ for all $n = 1,2,\ldots$ and $z_\infty \in U_\infty$.  We say that \emph{$\langle U_n, z_n \rangle \to \langle U_\infty, z_\infty \rangle$ in the sense of Carath\'{e}odory kernel convergence} provided that (i) $z_n \to z_\infty$; (ii) for any compact set $K \subset U_\infty$, $K \subset U_n$ for all but finitely many $n$; and (iii) for any domain $U$ containing $z_\infty$, if $U \subseteq U_n$ for infinitely many $n$, then $U \subseteq U_\infty$.

\begin{thm}[\cite{hej74}; see also \cite{Comerford2013}]
\label{caratheodory}
Let $U_1,U_2,\ldots$ and $U_\infty$ be domains and let $z_1,z_2,\ldots$ and $z_\infty$ be points with $z_n \in U_n$ for all $n = 1,2,\ldots$ and $z_\infty \in U_\infty$.  Let $\varphi_\infty: \disk \to U_\infty$ be the analytic covering map such that $\varphi(0) = z_\infty$ and $\varphi'(0) > 0$.  Likewise, for each $n = 1,2,\ldots$, let $\varphi_n: \disk \to U_n$ be the analytic covering map such that $\varphi_n(0) = z_n$ and $\varphi_n'(0) > 0$.  Then $\langle U_n, z_n \rangle \to \langle U_\infty, z_\infty \rangle$ in the sense of Carath\'{e}odory kernel convergence if and only if $\varphi_n \to \varphi_\infty$ uniformly on compact subsets of $\disk$.
\end{thm}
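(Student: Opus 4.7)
The direction $(\Leftarrow)$ is relatively direct. Condition (i) is immediate since $z_n = \varphi_n(0) \to \varphi_\infty(0) = z_\infty$. For (ii), given a compact $K \subset U_\infty$, cover $K$ by finitely many open disks $D_i$, each admitting a preimage disk $\widetilde D_i \subset \disk$ on which $\varphi_\infty$ is a homeomorphism; uniform convergence of $\varphi_n$ on $\overline{\widetilde D_i}$ together with a Rouch\'{e} argument shows that for $n$ large, $\varphi_n(\widetilde D_i)$ contains a slight shrinking of $D_i$, and these shrunken disks still cover $K$, so $K \subset U_n$ eventually. For (iii), let $V \ni z_\infty$ be a domain with $V \subseteq U_n$ infinitely often; fix $w \in V$ and a path $\gamma$ from $z_\infty$ to $w$ in $V$. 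The restriction of $\varphi_n$ to the component of $\varphi_n^{-1}(V)$ containing $0$ is a covering of $V$, so $\gamma$ lifts to a path $\widetilde\gamma_n$ starting at $0$, whose hyperbolic length in $\disk$ equals the hyperbolic length of $\gamma$ in $U_n$, which by Schwarz--Pick (using $V \subseteq U_n$) is at most the fixed hyperbolic length of $\gamma$ in $V$. Thus the endpoints $\widetilde\gamma_n(1)$ lie in a compact subset of $\disk$; a subsequential limit $\widetilde\gamma_n(1) \to \widetilde w$ together with uniform convergence gives $w = \varphi_\infty(\widetilde w) \in U_\infty$.

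The main direction $(\Rightarrow)$ proceeds via a normal family argument followed by a Schwarz lemma identification of each subsequential limit. First, I would show $\{\varphi_n\}$ is locally bounded on $\disk$: for $r < 1$, the set $\varphi_n(\overline{B(0,r)})$ is the closed hyperbolic ball of radius $R = d_\disk(0,r)$ around $z_n$ in $U_n$. Using (ii) to embed a slightly larger compact hyperbolic ball around $z_\infty$ in $U_\infty$ inside $U_n$ for large $n$, and comparing hyperbolic metrics via Schwarz--Pick (a short curve in $U_n$ starting near $z_\infty$ cannot exit that compact ball since the metric on the embedded subdomain is larger), these Euclidean sets stay in a fixed compact subset of $\complex$ uniformly in large $n$. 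Montel then extracts a subsequence $\varphi_{n_k} \to \varphi$ uniformly on compact subsets of $\disk$. By Hurwitz applied to the nowhere-vanishing $\varphi_n'$, $\varphi$ is either constant or locally univalent, with constancy ruled out by the derivative bound below. To show $\varphi(\disk) \subseteq U_\infty$: given $\zeta_0 \in \disk$, cover the compact path $\varphi([0,\zeta_0])$ by finitely many open sets $\varphi(\widetilde V_j)$ on which $\varphi|_{\widetilde V_j}$ is a homeomorphism; Rouch\'{e} on $\varphi_n - \varphi$ gives that $\varphi_n(\widetilde V_j)$ contains a slight shrinking of $\varphi(\widetilde V_j)$ for large $n$. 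The union $W$ of these shrunken images is a domain containing $z_\infty$ with $W \subseteq U_n$ eventually, so by (iii) $W \subseteq U_\infty$, hence $\varphi(\zeta_0) \in U_\infty$.

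To identify $\varphi$ with $\varphi_\infty$ I invoke the Schwarz lemma in both directions. Lifting $\varphi \colon \disk \to U_\infty$ through the covering $\varphi_\infty$ yields $\widetilde\varphi \colon \disk \to \disk$ with $\widetilde\varphi(0) = 0$, so $\varphi'(0) = \varphi_\infty'(0) \, \widetilde\varphi'(0) \leq \varphi_\infty'(0)$. For the reverse, fix $r < 1$; by (ii), $\varphi_\infty(\overline{B(0,r)}) \subset U_n$ for large $n$, and the simple connectedness of $B(0,r)$ lets us lift $\varphi_\infty|_{B(0,r)}$ through $\varphi_n$ to $\widetilde\psi_{n,r} \colon B(0,r) \to \disk$ sending $0$ to some preimage of $z_\infty$ under $\varphi_n$; local boundedness of the hyperbolic density of $U_n$ near $z_\infty$ (from the same comparison used for normality) forces this preimage to $0$ as $n \to \infty$. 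A rescaled Schwarz lemma together with a M\"obius re-centering (whose derivative at the basepoint tends to $1$) then gives $\varphi_n'(0) \geq (r - o(1))\varphi_\infty'(0)$; letting $n_k \to \infty$ and then $r \to 1$ yields $\varphi'(0) \geq \varphi_\infty'(0)$. Combined with the upper bound, $\varphi'(0) = \varphi_\infty'(0)$, so $\widetilde\varphi'(0) = 1$ and by Schwarz's equality case $\widetilde\varphi = \id$, i.e., $\varphi = \varphi_\infty$. Since every subsequence of the normal family $\{\varphi_n\}$ has a further subsequence converging to the same limit $\varphi_\infty$, we conclude $\varphi_n \to \varphi_\infty$ uniformly on compact subsets of $\disk$. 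The principal obstacle is the normality step: since the $U_n$ need not be uniformly bounded, Montel's two-omitted-values criterion does not apply with a uniform pair of omitted values, and one must instead control hyperbolic ball sizes directly using kernel convergence and Schwarz--Pick.
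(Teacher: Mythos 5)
The paper does not prove this theorem; it is cited from Hejhal and Comerford, so there is no internal proof to compare against. Your overall outline (normal family, identify every subsequential limit via two applications of the Schwarz lemma and lifting through coverings, then conclude by uniqueness of the limit) is the standard approach, and your $(\Leftarrow)$ direction and the Schwarz--lemma identification in $(\Rightarrow)$ are essentially sound. However, your argument for normality --- the step you yourself flag as the principal obstacle --- does not work as stated, and this gap propagates.

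You write that $\varphi_n(\overline{B(0,r)})$ is the closed $U_n$-hyperbolic ball of radius $R$ about $z_n$, that a slightly larger $U_\infty$-hyperbolic ball $K$ about $z_\infty$ embeds in $U_n$ for large $n$ by condition (ii), and that ``a short curve in $U_n$ starting near $z_\infty$ cannot exit that compact ball since the metric on the embedded subdomain is larger.'' But Schwarz--Pick gives the inequality in the unhelpful direction: since $K \subset U_n$, the hyperbolic metric of the \emph{subdomain} $K$ dominates that of $U_n$ on $K$, so a curve of small $U_n$-length may have arbitrarily large $K$-length (indeed infinite, since $\lambda_K$ blows up at $\partial K$) and there is nothing stopping it from reaching $\partial K$. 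What you actually need is a \emph{lower} bound on $\lambda_{U_n}$ near $z_\infty$, equivalently an upper bound on $|\varphi_n'(0)|$, and neither the embedding $K \subset U_n$ nor $U_n \supset B(z_\infty,\rho)$ supplies one: those inclusions only yield upper bounds on $\lambda_{U_n}$. For multiply-connected $U_n$ the density $\lambda_{U_n}$ can decay logarithmically faster than $1/d(\cdot,\partial U_n)$, so even knowing $d(z_n,\partial U_n)$ is bounded (which does follow from (iii)) is not enough.

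The missing ingredient is that kernel convergence forces $\partial U_n$ to accumulate on \emph{every} point of $\partial U_\infty$: for any $a \in \partial U_\infty$ and any $\rho>0$, if $B(a,\rho)\subset U_n$ for infinitely many $n$ one can thicken a path in $U_\infty$ from $z_\infty$ to a point of $B(a,\rho)\cap U_\infty$ into a domain $V\ni z_\infty$ with $V\subset U_n$ infinitely often, so (iii) would give $B(a,\rho)\subset U_\infty$, a contradiction. Taking two distinct $a,b\in\partial U_\infty$ (which exist since $U_\infty$ is hyperbolic) produces $a_n,b_n\in\partial U_n$ with $a_n\to a$, $b_n\to b$. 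Then $\lambda_{U_n}(z_n)\ge \lambda_{\complex\setminus\{a_n,b_n\}}(z_n)\to\lambda_{\complex\setminus\{a,b\}}(z_\infty)>0$, which bounds $|\varphi_n'(0)|=2/\lambda_{U_n}(z_n)$ above; and normality now follows either from Schottky's theorem applied to the normalizations $(\varphi_n-a_n)/(b_n-a_n)$, which omit $0$ and $1$ and have bounded value at the origin, or by factoring $\varphi_n$ through the (convergent) universal covers of $\complex\setminus\{a_n,b_n\}$. With this repaired, the remainder of your proof of the forward implication goes through.
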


\subsection{Partitioning plane domains}
\label{sec:partitioning domains}

Let $U$ be a bounded domain in $\complex$.  We next describe a way of partitioning $U$ into simple sets which are either circular arcs or regions whose boundaries are unions of circular arcs.

Let $\mathcal{B}$ be the collection of all open disks $B(c,r) \subset U$ such that $|\partial B(c,r) \cap \partial U| \geq 2$.  Let $\mathcal{C}$ be the collection of all centers of such disks, and for $c \in \mathcal{C}$ let $r(c)$ be the  radius of the corresponding disk in $\mathcal B$.  The set $\mathcal{C}$, called the \emph{skeleton of $U$}, was studied by several authors (see for example \cite{fre97}).  Note that for each $c \in \mathcal{C}$, $B(c,r(c))$ is conformally equivalent with the unit disk $\disk$ and, hence, can be endowed with the hyperbolic metric $\rho_c$.  Let $\hull(c)$ be the convex hull of the set $\partial B(c,r(c)) \cap \partial U$ in $B(c,r(c))$ \emph{using the hyperbolic metric $\rho_c$ on the disk $B(c,r(c))$}.  The following theorem by Kulkarni and Pinkall generalizes an earlier result by Bell \cite{bell76} (see \cite{bfmot13} for a more complete description):

\begin{thm}[\cite{kulkpink94}]
\label{KP}
For each $z \in U$ there exists a unique $c \in \mathcal{C}$ such that $z \in \hull(c)$.
\end{thm}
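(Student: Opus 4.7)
My plan is to prove existence and uniqueness of $c$ separately, both via a variational or geometric argument.

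For existence, fix $z \in U$ and consider the continuous function $\Phi_z \colon U \to \real$ defined by $\Phi_z(c) = d(c,\partial U) - |c-z|$. Observe that $\Phi_z(c) > 0$ is equivalent to the open Euclidean disk $B(c, d(c,\partial U))$ being contained in $U$ and containing $z$ in its interior. Since $\Phi_z(z) = d(z,\partial U) > 0$ and $U$ is bounded, the superlevel set $\{c \in U : \Phi_z(c) \geq \Phi_z(z)\}$ is compact in $\complex$ (any accumulation point on $\partial U$ would force $|c_n - z| \to 0$, contradicting $z \in U$), so $\Phi_z$ attains a positive maximum at some $c^* \in U$, and I set $r^* = d(c^*,\partial U)$. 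A first-order subdifferential analysis at $c^*$, using the Clarke subdifferential of the distance function $d(\cdot,\partial U)$, yields the optimality condition
\[ \frac{c^* - z}{|c^* - z|} \in \mathrm{conv}\left\{ \frac{c^* - p}{r^*} : p \in N(c^*) \right\}, \]
where $N(c^*) = \partial B(c^*, r^*) \cap \partial U$. After sliding $c^*$ along a direction of zero directional derivative (which preserves $\Phi_z$ and eventually acquires new boundary contacts, since $U$ is bounded), I may assume $|N(c^*)| \geq 2$, placing $c^* \in \mathcal{C}$. Combining the above condition with the observation that a hyperbolic geodesic in $B(c^*, r^*)$ issuing from $p \in N(c^*)$ begins tangent to the Euclidean radius $(c^* - p)/r^*$, one concludes that $z$ lies in the hyperbolic convex hull of $N(c^*)$, i.e.\ $z \in \hull(c^*)$.

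For uniqueness, suppose $c_1, c_2 \in \mathcal{C}$ are distinct, and set $B_i = B(c_i, r(c_i))$ and $N_i = \partial B_i \cap \partial U$. If $B_1 \cap B_2 = \emptyset$ then $\hull(c_1) \cap \hull(c_2) = \emptyset$ is immediate, so assume the disks intersect. A short argument (using that $N_i \subset \partial U$ is disjoint from the open disk $B_{3-i}$, and that two disks each in $\mathcal{C}$ cannot be nested because nested circles meet in at most one point while each $|N_i| \geq 2$) shows $\partial B_1 \cap \partial B_2$ consists of exactly two points $a, b$. Let $\gamma_i$ be the hyperbolic geodesic in $B_i$ with endpoints $a, b$. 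Then $N_i$ lies on the arc of $\partial B_i$ outside $B_{3-i}$, which lies on the $c_i$-side of $\gamma_i$; since this hyperbolic half-plane of $B_i$ is hyperbolically convex, it contains $\hull(c_i)$. A direct geometric check (the bulge of each $\gamma_i$ toward $c_i$ inside the lens $B_1 \cap B_2$) shows that these two half-planes are disjoint, so $\hull(c_1) \cap \hull(c_2) = \emptyset$.

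The main obstacle is the existence step, specifically translating the Euclidean first-order optimality condition at $c^*$ into the hyperbolic convex-hull condition $z \in \hull(c^*)$. Euclidean and hyperbolic convex hulls of subsets of $\partial B(c^*, r^*)$ inside $B(c^*, r^*)$ differ in general, and the bridge is that a hyperbolic geodesic terminating at a boundary ideal point $p$ is tangent to the Euclidean radius through $p$, so the subdifferential condition expressed in terms of Euclidean radial unit vectors correctly detects hyperbolic-hull membership. A secondary technical subtlety is the sliding-along-zero-gradient argument used to guarantee $|N(c^*)| \geq 2$, together with the degenerate case $c^* = z$ where the first-order condition formally breaks down but the conclusion still holds.
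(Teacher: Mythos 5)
The paper does not prove \cref{KP}; it cites Kulkarni--Pinkall \cite{kulkpink94} and uses the result as a black box.  So the question is whether your blind proof is a correct one, and unfortunately the existence half has a fatal gap.

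The trouble starts with the functional $\Phi_z(c)=d(c,\partial U)-|c-z|$.  Since $d(\cdot,\partial U)$ is $1$-Lipschitz, one has $\Phi_z(c)\le d(z,\partial U)=\Phi_z(z)$ for \emph{every} $c$, with equality precisely when $c$ lies on a ray emanating from $z$ away from a nearest boundary point (and that boundary point remains nearest to $c$).  Thus the maximum of $\Phi_z$ is always attained at $c^*=z$; the first-order optimality condition you write, which presumes $c^*\neq z$, is never the operative one.  In the case $|N(z)|\ge 2$ (the degenerate case you flag at the end), $c^*=z$ is in fact a \emph{strict} local maximum --- every one-sided directional derivative of $\Phi_z$ at $z$ is $\min_{p\in N(z)}\langle v,(z-p)/r\rangle-1<0$ when $N(z)$ contains two distinct points --- so the ``sliding along a zero-derivative direction'' step cannot be performed, and you are stuck with $c^*=z$.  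But $z\in\hull(z)$ only when $N(z)$ contains a pair of antipodal points of $\partial B(z,r(z))$, which is far from automatic.  Concretely, let $U=B(0,10)\setminus\{1,-1\}$ and $z=i/2$.  Then $N(z)=\{1,-1\}$, so $z\in\mathcal{C}$ and your construction returns $c^*=z$; yet $\hull(z)$ is the hyperbolic geodesic joining $1$ to $-1$ in $B(z,\sqrt{5}/2)$, which does not pass through the centre $z$ because $1$ and $-1$ are not antipodal on $\partial B(z,\sqrt{5}/2)$.  A short computation shows that the unique $c$ with $z\in\hull(c)$ is $c=\tfrac43 i$ (here $r(c)=\tfrac53$ and the geodesic from $1$ to $-1$ in $B(\tfrac43 i,\tfrac53)$ is the arc of the circle of radius $\tfrac54$ centred at $-\tfrac34 i$, which passes through $i/2$).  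So the variational problem you set up simply does not select the right disk.  The same difficulty persists in the $|N(z)|=1$ case: sliding until a second contact appears produces a $c^*$ for which $z$ lies on the Euclidean radius $c^*p$, but the hyperbolic geodesic emanating from $p$ only \emph{starts} tangent to that radius and then curves away, so $z\in\hull(c^*)$ again fails generically.

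The underlying issue is that $\Phi_z$ is a purely Euclidean quantity, while membership in $\hull(c)$ is a conformal (hyperbolic) condition inside $B(c,r(c))$.  The variational characterisation that actually works (and is essentially what Kulkarni--Pinkall use) is to minimise the hyperbolic density of $B(c,r(c))$ at $z$, namely $c\mapsto 2r(c)/\bigl(r(c)^2-|c-z|^2\bigr)$, over all $c$ with $z\in B(c,r(c))$; the minimiser is the desired $c$, and this is the definition of the Kulkarni--Pinkall metric on $U$.  That functional genuinely distinguishes the point $\tfrac43 i$ from $\tfrac12 i$ in the example above, whereas yours does not.  Your uniqueness argument is in better shape --- the observation that two inscribed disks in $\mathcal{C}$ cannot be nested and that each $N_i$ lies on the arc of $\partial B_i$ outside $B_{3-i}$ is sound, modulo the edge case where one of the two intersection points $a,b$ of $\partial B_1\cap\partial B_2$ happens to lie in $\partial U$ --- but the existence half needs to be rebuilt around the conformally natural functional before the proof can be considered correct.
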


\begin{figure}
\begin{center}
\includegraphics{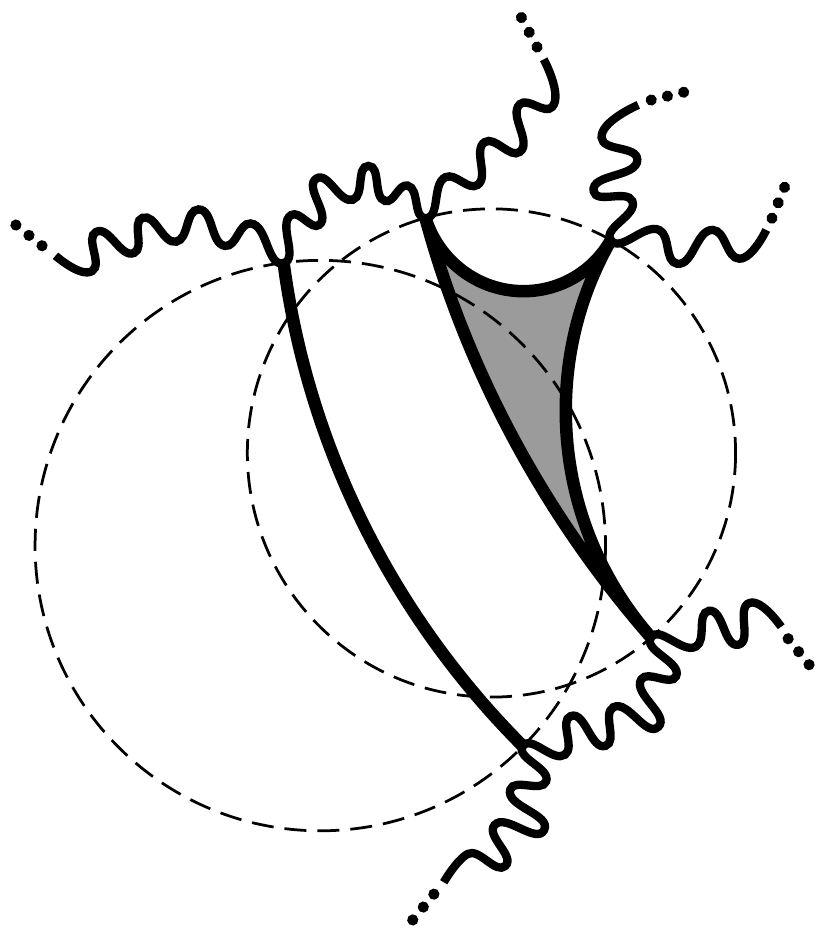}
\end{center}

\caption{Depiction of two examples of the sets $\hull(c)$ from the Kulkarni-Pinkall decomposition of a domain $U$ in $\complex$.  In the picture, $U$ is a component of the complement of the wavy lines.}
\label{fig:KP}
\end{figure}

Let $\mathcal{J}$ be the collection of all crosscuts of $U$ which are contained in the boundaries of the sets $\hull(c)$ for $c \in \mathcal{C}$.  The elements of $\mathcal{J}$ are circular open arcs (called \emph{chords}) whose endpoints are in $\partial U$.  Two such chords do not cross each other inside $U$ (i.e., if $\ell \neq \ell'$ are chords in $\mathcal{J}$, then $\ell \cap \ell' = \0$) and the limit of any convergent sequence of chords in $\mathcal{J}$ is either a chord in $\mathcal{J}$ or a point in $\partial U$.  In particular, the subcollection of chords of diameter greater or equal to $\e$ is compact for each $\e > 0$.  As such, the family $\mathcal{J}$ is close to being a \emph{lamination} of $U$ (see \cref{d:lamination} in \cref{sec:characterization} below).  However, it is possible that uncountably many distinct chords in $\mathcal{J}$ have the same pair of endpoints $x,y \in \partial U$.

\subsection{Equidistant sets}
\label{sec:equi set}

Let $A_1$ and $A_2$ be disjoint closed sets in $\complex$.  The \emph{equidistant set} between $A_1$ and $A_2$ is the set
\[ \Equi(A_1,A_2) = \left\{ z \in \complex: \min_{w \in A_1} |z-w| = \min_{w \in A_2} |z-w| \right\} .\]

The equidistant set is a convenient way to define a set running ``in between'' $A_1$ and $A_2$.  Moreover, it has a very simple local structure in the case that the sets $A_1$ and $A_2$ are not ``entangled'' in the sense of the following definition:

\begin{defn}
We say that $A_1$ and $A_2$ are \emph{non-interlaced} if whenever $B(c,r)$ is an open disk contained in the complement of $A_1 \cup A_2$, there are disjoint arcs $C_1,C_2 \subset \partial B(c,r)$ such that $A_1 \cap \partial B(c,r) \subset C_1$ and $A_2 \cap \partial B(c,r) \subset C_2$.  We allow for the possibility that $C_1 = \0$ in the case that $A_2 \cap \partial B(c,r) = \partial B(c,r)$, and vice versa.
\end{defn}

By a \emph{$1$-manifold} in the plane, we mean a \emph{closed} set $M \subset \complex$ such that each component of $M$ is homeomorphic either to $\real$ or to $\partial \disk$, and these components are all open in $M$.

\begin{thm}[\cite{brou05,aartbrouwover}]
\label{thm:manifold}
Let $A_1$ and $A_2$ be disjoint closed sets in $\complex$.  If $A_1$ and $A_2$ are non-interlaced, then $\Equi(A_1,A_2)$ is a $1$-manifold in the plane.
\end{thm}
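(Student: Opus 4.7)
The strategy is purely local: I will show that every $z_0 \in \Equi(A_1, A_2)$ has a neighborhood in which the equidistant set is a simple arc through $z_0$. Since $\Equi(A_1, A_2)$ is the zero set of the continuous function $f(z) = d(z, A_1) - d(z, A_2)$ and hence closed in $\complex$, this local arc structure forces each connected component of $\Equi(A_1, A_2)$ to be homeomorphic to $\real$ or to $\partial \disk$ and open in $\Equi(A_1, A_2)$, giving the $1$-manifold conclusion.

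Fix $z_0 \in \Equi(A_1, A_2)$ and set $d = d(z_0, A_1) = d(z_0, A_2) > 0$. The open disk $B(z_0, d)$ is disjoint from $A_1 \cup A_2$, so the non-interlacing hypothesis supplies disjoint arcs $C_1, C_2 \subset \partial B(z_0, d)$ with $A_i \cap \partial B(z_0, d) \subset C_i$. Each $C_i$ is nonempty (because $A_i$ attains the distance $d$ from $z_0$) and, being disjoint nonempty closed subsets of a circle, both are proper arcs. Consequently $\partial B(z_0, d) \sm (C_1 \cup C_2)$ has two open ``gap'' components of positive arc length, and I can choose a line $L$ through $z_0$ crossing both gaps so that $C_1$ and $C_2$ lie in opposite open half-planes bounded by $L$, each at angular distance at least some $\theta_0 > 0$ from $L$ as seen from $z_0$. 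Let $v$ be a unit vector normal to $L$ oriented into the half-plane containing $C_1$, and let $\ell$ be a unit vector along $L$.

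The core estimate is uniform monotonicity of $f$ along $v$ on a small disk $B(z_0, \delta)$. The multivalued nearest-point maps $N_i(z) = \{a \in A_i : |z - a| = d(z, A_i)\}$ are upper semi-continuous and satisfy $N_i(z_0) \subset C_i$. By choosing $\delta > 0$ small enough, I can ensure that for every $z \in B(z_0, \delta)$ and every $a \in N_i(z)$, the inner product $\langle v, (a-z)/|a-z|\rangle$ has the same sign as it does at the corresponding points of $C_i$ and has absolute value at least $\tfrac{1}{2}\sin \theta_0$. A standard computation of the one-sided directional derivative of a distance function then yields, uniformly for $z \in B(z_0, \delta)$,
\[ \partial_v^+ f(z) = \sup_{a \in N_2(z)} \langle v, (a-z)/|a-z|\rangle - \sup_{a \in N_1(z)} \langle v, (a-z)/|a-z|\rangle \leq -\sin \theta_0 < 0. \]
The standard Dini-derivative inequality then forces $f$ to strictly decrease along every segment parallel to $v$ inside $B(z_0, \delta)$.

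In coordinates $(t, s) \in (-\delta, \delta)^2$ on $B(z_0, \delta)$ defined by $z(t,s) = z_0 + t\ell + sv$, the function $f \circ z$ is continuous, vanishes at the origin, and is strictly decreasing in $s$. For $|t|$ sufficiently small the intermediate value theorem therefore produces a unique $s = \phi(t)$ with $f(z(t, \phi(t))) = 0$, and continuity of $\phi$ follows from continuity of $f$ combined with the uniform monotonicity. Hence $\Equi(A_1, A_2)$ agrees near $z_0$ with the graph of $\phi$, a simple open arc. I expect the main obstacle to be the quantitative upper semi-continuity of the nearest-point maps that underlies the localization: non-interlacing is precisely what prevents $N_1(z)$ and $N_2(z)$ from spilling across the separating line $L$ as $z$ varies, which would destroy the monotonicity of $f$ along $v$ and allow $\Equi(A_1, A_2)$ to branch.
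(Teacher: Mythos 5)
The paper does not prove \cref{thm:manifold}; it cites \cite{brou05,aartbrouwover}. So I will evaluate your argument on its own terms, and there is a genuine gap.

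The gap is in the step where you ``choose a line $L$ through $z_0$ crossing both gaps so that $C_1$ and $C_2$ lie in opposite open half-planes bounded by $L$.'' Such an $L$ passes through the center $z_0$ of $\partial B(z_0,d)$, so its two intersections with the circle are antipodal; $C_1$ lying in an open half-plane bounded by $L$ forces $C_1$ to fit inside an open semicircle, and likewise for $C_2$. Nothing in the non-interlaced hypothesis guarantees this. For a concrete obstruction, take $z_0=0$, $A_1=\{e^{i\theta}:\theta\in[\pi/8,\,2\pi-\pi/8]\}$ and $A_2=\{1\}$. These are disjoint, closed, non-interlaced, and $z_0\in\Equi(A_1,A_2)$ with $d=1$, but $A_1\cap\partial B(0,1)$ has arc length $7\pi/4>\pi$, so any admissible $C_1$ has length $>\pi$ and cannot be confined to a half-plane. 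In fact the two gaps are the short arcs $(-\pi/8,0)$ and $(0,\pi/8)$, which are not antipodal, so no line through $z_0$ even crosses both gaps. Near $z_0$, the equidistant set is the union of the two rays at angles $\pm\pi/16$ from the origin — a wedge opening toward $A_2$. This \emph{is} locally an arc (so the theorem holds), but it is not the graph of a continuous $\phi$ over any line through $z_0$ that separates the nearest-point directions: a short segment perpendicular to the angle bisector of the wedge, displaced slightly from $z_0$, meets $\Equi(A_1,A_2)$ twice, so your claim of strict monotonicity of $f$ along lines parallel to $v$ fails.

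The deeper issue is your assertion that, after shrinking $\delta$, ``the inner product $\langle v,(a-z)/|a-z|\rangle$ has the same sign as it does at the corresponding points of $C_i$.'' When $C_1$ subtends more than a semicircle as seen from $z_0$, these inner products at points of $C_1$ already take both signs, so there is no single sign to preserve; the estimate $\partial_v^+ f\leq -\sin\theta_0$ breaks down, and the intermediate-value argument no longer produces a graph. To salvage a local-chart argument along these lines you would need to allow the chart direction to depend on the local geometry in a more delicate way (e.g.\ the axis perpendicular to the bisector of the two gap directions, rather than any axis separating $C_1$ from $C_2$), and you would need a separate argument that the two local branches of $\{f=0\}$ join up at $z_0$ into a single arc rather than a triod. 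As written, the monotonicity step is false, and this is not a minor technicality: it is exactly the step where the wedge (rather than smooth arc) structure of $\Equi(A_1,A_2)$ must be confronted.
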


\subsection{Midpoints of paths}
\label{sec:midpoints}

We identify the space of all paths in the plane $\complex$ with the function space $\mathcal{C}([0,1],\complex)$ with the \emph{uniform metric}; that is, the distance between two paths $\gamma_1,\gamma_2 \in \mathcal{C}([0,1],\complex)$ is equal to $\sup \{|\gamma_1(t) - \gamma_2(t)|: t \in [0,1]\}$.

The standard Euclidean length of a path is not a well-behaved function from $\mathcal{C}([0,1],\complex)$ to $[0,\infty)$.  First, it is not defined (i.e., not finite) for all paths in $\mathcal{C}([0,1],\complex)$, but only for rectifiable paths.  Second, paths can be arbitrarily close in the uniform metric and yet have very different Euclidean path lengths.

However, there do exist alternative ``path length'' functions $\len: \mathcal{C}([0,1],\complex) \to [0,\infty)$ such that $\len$ is defined for \emph{all} paths in $\mathcal{C}([0,1],\complex)$, and $\len$ is continuous with respect to the uniform metric on $\mathcal{C}([0,1],\complex)$ and the standard topology on $[0,\infty) \subset \real$, see \cite{Morse1936,Silverman1969,hot13}.  Such an alternative path length function can be used to define a choice of ``midpoint'' of a path which varies continuously with the path.  Specifically, the midpoint of $\gamma$ is defined to be the point $\midpt(\gamma) = \gamma(t_0)$, where $t_0 \in (0,1)$ is chosen such that $\len(\gamma|_{[0,t_0]}) = \len(\gamma|_{[t_0,1]})$.

In this paper, we will not need to know any particulars about the definitions of such path length functions, but only this result about existence of such midpoints, which we state below.

\begin{thm}[see e.g.\ \cite{hot13}]
\label{thm:midpt}
There is a continuous function
\[ \midpt: \mathcal{C}([0,1],\complex) \to \complex \]
such that $\midpt(\gamma) \in \gamma((0,1))$ for all $\gamma \in \mathcal{C}([0,1],\complex)$.

Moreover, if $\gamma_1$ and $\gamma_2$ are both parameterizations of a closed arc $A$ (i.e.\ if $\gamma_1([0,1]) = \gamma_2([0,1]) = A$ and $\gamma_1$ and $\gamma_2$ are homeomorphisms between $[0,1]$ and $A$), then $\midpt(\gamma_1) = \midpt(\gamma_2)$.
\end{thm}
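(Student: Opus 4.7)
The plan is to construct $\midpt(\gamma)$ as the ``equal length'' point of $\gamma$ with respect to a generalized path-length function $\len \colon \mathcal{C}([0,1],\complex) \to [0,\infty)$. The first and hardest step is to produce such a $\len$ with the following properties: (a) continuity in the uniform metric; (b) $\len(\gamma|_{[a,b]}) = 0$ if and only if $\gamma$ is constant on $[a,b]$; (c) monotonicity under containment of subintervals; and (d) invariance under monotone homeomorphic reparameterizations of the domain. Ordinary Euclidean arc length satisfies (b)--(d) but fails (a) catastrophically, since arbitrarily small uniform perturbations can produce enormous length changes, and it is not even finite on general continuous paths. One must instead use a bounded, ``tame'' alternative, for instance a supremum over partitions of sums of concave bounded transforms of chord lengths, or the more elaborate Silverman--Morse construction from the cited references. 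I would invoke the existence of such a $\len$ as a black box, since the paper itself defers these details.

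Given $\len$, for a non-constant path $\gamma$ define $L_\gamma \colon [0,1] \to [0,\len(\gamma)]$ by $L_\gamma(t) = \len(\gamma|_{[0,t]})$. By (a) and (c), $L_\gamma$ is continuous and non-decreasing, with $L_\gamma(0) = 0 < \len(\gamma)/2 < \len(\gamma) = L_\gamma(1)$, so the preimage $L_\gamma^{-1}(\len(\gamma)/2)$ is a non-empty compact interval $[a_\gamma, b_\gamma] \subset (0,1)$. Property (b) forces $\gamma$ to be constant on any subinterval where $L_\gamma$ is constant, so the value $\gamma(a_\gamma) = \gamma(b_\gamma)$ is independent of the choice of representative in the plateau. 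Define $\midpt(\gamma) = \gamma(a_\gamma)$, and for constant $\gamma$ set $\midpt(\gamma) = \gamma(0)$. In both cases $\midpt(\gamma) \in \gamma((0,1))$ is automatic, the constant case because $\gamma((0,1)) = \{\gamma(0)\}$.

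To verify continuity of $\midpt$, suppose $\gamma_n \to \gamma$ uniformly. If $\gamma$ is non-constant, then so are $\gamma_n$ for all large $n$, and (a) gives $\len(\gamma_n) \to \len(\gamma)$ and $L_{\gamma_n} \to L_\gamma$ uniformly on $[0,1]$. Any subsequential limit $t^*$ of $\{a_{\gamma_n}\}$ satisfies $L_\gamma(t^*) = \len(\gamma)/2$, hence $t^* \in [a_\gamma, b_\gamma]$, and by (b) $\gamma(t^*) = \gamma(a_\gamma)$; uniform convergence then gives $\midpt(\gamma_n) = \gamma_n(a_{\gamma_n}) \to \midpt(\gamma)$. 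If $\gamma$ is constant, the images $\gamma_n([0,1])$ shrink to $\{\gamma(0)\}$ and the inclusion $\midpt(\gamma_n) \in \gamma_n([0,1])$ forces the required convergence. Finally, if $\gamma_1$ and $\gamma_2$ both parameterize a closed arc $A$, they differ by a monotone homeomorphism $h$ of $[0,1]$; by (d), $L_{\gamma_2}(t) = L_{\gamma_1}(h(t))$ in the orientation-preserving case and $L_{\gamma_2}(t) = \len(\gamma_1) - L_{\gamma_1}(h(t))$ otherwise, and in either case the equal-length plateaux match under $h$ to the same point of $A$.

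The main obstacle throughout is the unavoidable first step: producing a finite, continuous length function on \emph{all} continuous paths, not only the rectifiable ones, while still retaining monotonicity, reparameterization invariance, and vanishing only on constants. Once such a $\len$ is in hand, the rest of the argument is essentially a single application of the intermediate value theorem.
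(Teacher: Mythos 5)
Your approach is essentially the one the paper takes: the paper likewise black-boxes the existence of a continuous generalized length function $\len$ (citing Morse, Silverman, and \cite{hot13}) and defines $\midpt(\gamma) = \gamma(t_0)$ where $\len(\gamma|_{[0,t_0]}) = \len(\gamma|_{[t_0,1]})$, giving no further proof. One gap in your list of black-box properties: both your claim that $L_\gamma$ being constant on $[a_\gamma, b_\gamma]$ forces $\gamma$ to be constant there via (b), and your formula $L_{\gamma_2}(t) = \len(\gamma_1) - L_{\gamma_1}(h(t))$ in the orientation-reversing case, tacitly use additivity of $\len$ over concatenation, i.e.\ $\len(\gamma|_{[0,s]}) + \len(\gamma|_{[s,1]}) = \len(\gamma)$, which does not follow from (a)--(d). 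Without it, $\len(\gamma|_{[0,a_\gamma]}) = \len(\gamma|_{[0,b_\gamma]})$ does not by itself yield $\len(\gamma|_{[a_\gamma,b_\gamma]}) = 0$, so your ``independent of the representative in the plateau'' step loses its justification; the Morse--Silverman constructions do supply additivity, so you should add it to the black box (or switch to the paper's symmetric balance-point formulation, which still needs it at the same spot).
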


In light of the second part of \cref{thm:midpt}, given an (open or closed) arc $A$, we define the midpoint of $A$ to be $\midpt(A) = \midpt(\gamma)$ where $\gamma$ is any path which parameterizes $A$ ($\overline{A}$ if $A$ is an open arc).

\section{Main Theorem}
\label{sec:characterization}

In this section, we state and prove the main theorem of this paper, which is a characterization of isotopies of uniformly perfect plane compacta which can be extended over the entire plane.  Note that the example of Fabel mentioned in the Introduction can easily be modified to obtain an isotopy $h: X \times [0,1] \to \complex$ so that for each $t$, $X^t = h^t(X)$ is a uniformly perfect Cantor set with the same constant $k$.  Thus, additional assumptions are required to ensure the extension of such an isotopy over the plane.

\begin{thm}
\label{main1}
Suppose that $h: X \times [0,1] \to \complex$ is an isotopy of a compactum $X \subset \complex$ starting at the identity, such that $X^t$ is uniformly perfect with the same constant $k$ for each $t \in [0,1]$.  Then the following are equivalent:
\begin{enumerate}
\item $h$ extends to an isotopy of the entire plane $\complex$;
\item For each $\e > 0$ there exists $\delta > 0$ such that for any crosscut $Q$ of a complementary domain $U$ of $X$ with $\diam(C) < \delta$, there exists a homotopy $h_Q: (X \cup Q) \times [0,1] \to \complex$ starting at the identity which extends $h$ and is such that $h_Q^t(Q) \cap X^t = \0$ and $\diam(h_Q^t(Q)) < \e$ for all $t \in [0,1]$.
\end{enumerate}
\end{thm}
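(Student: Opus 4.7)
The direction $(1) \Rightarrow (2)$ is routine. Given an extension $H: \complex \times [0,1] \to \complex$ with $H^0 = \id$, fix a closed disk $D$ containing $\bigcup_{t \in [0,1]} X^t$; then $H|_{D \times [0,1]}$ is uniformly continuous, so for each $\e > 0$ there is $\delta > 0$ with $\diam(H^t(Q)) < \e$ whenever $Q \subset D$ has $\diam(Q) < \delta$ and $t \in [0,1]$. For a crosscut $Q$ of a complementary domain of $X$, injectivity of $H^t$ together with $Q \cap X = \emptyset$ gives $H^t(Q) \cap X^t = \emptyset$, so $h_Q := H|_{(X \cup Q) \times [0,1]}$ does the job.

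For $(2) \Rightarrow (1)$, the plan is to extend $h$ over each complementary domain $U$ of $X$ via the analytic covering map apparatus combined with the Kulkarni--Pinkall decomposition, and then glue over $X$. Let $U^t$ denote the complementary component of $X^t$ corresponding to $U$ under $h^t$. Using hypothesis (2), I argue that for a continuous selection $t \mapsto z_0^t \in U^t$ with $z_0^0 \in U$, one has $\langle U^t, z_0^t \rangle \to \langle U^{t_0}, z_0^{t_0} \rangle$ in the sense of Carath\'eodory kernel convergence --- (2) forbids both pinching of $U^t$ (which would require a thin crosscut to develop large image diameter) and the sudden appearance of new points in the kernel. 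By \cref{caratheodory}, the normalized analytic covering maps $\varphi_{U^t}: (\disk,0) \to (U^t, z_0^t)$ then converge uniformly on compact subsets of $\disk$.

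Apply the KP decomposition (\cref{KP}) to $U$ to obtain chords $\mathcal{J}$ and hulls $\{\hull(c)\}$. For each chord $\ell \in \mathcal{J}$, hypothesis (2) gives an extension $h_\ell^t$ with $h_\ell^t(\ell) \subset U^t$ of controlled diameter. Define $H^t(\ell)$ to be the hyperbolic geodesic in $U^t$ joining $h^t$ of the endpoints of $\ell$, in the homotopy class (rel endpoints) specified by $h_\ell^t(\ell)$; by Gehring--Hayman (\cref{gehrhay}) the constant $K$ is uniform in $t$, since the perfectness constant $k$ is, so $\diam(H^t(\ell)) \leq K \cdot \diam(h_\ell^t(\ell))$. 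Extend $H^t$ over each hull $\hull(c)$ by a canonical interpolation --- for instance, using $\varphi_U$ and $\varphi_{U^t}$ to pass to $\disk$ and match the boundary geodesics, with midpoints of chords (\cref{thm:midpt}) providing canonical reference points --- and set $H^t = h^t$ on $X$.

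The main obstacle is joint continuity of the resulting $H$ at points of $X$, uniformly in $t$; this is the step that directly consumes hypothesis (2). Given $x \in X$ and $\e > 0$, any KP hull $\hull(c)$ containing a point $z$ close to $x$ is bounded by chords of small diameter, a consequence of the geometry of the skeleton $\mathcal{C}$ together with uniform perfectness of $X^t$; by (2) and Gehring--Hayman those chords have geodesic images under $H^t$ of diameter $\leq K\e$. Hence $H^t(\hull(c))$ lies in a small neighborhood of $x^t$, and $z \to x$ forces $H^t(z) \to x^t$ at a rate uniform in $t$. Continuity of $H$ on $\complex \sm X$ comes from the Carath\'eodory convergence of the $U^t$'s, and injectivity of each $H^t$ follows from injectivity on $X$, on each complementary domain, and the disjointness of complementary domains and their images.
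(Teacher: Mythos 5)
Your $(1)\Rightarrow(2)$ argument is the standard uniform-continuity one and matches what the paper treats as immediate.

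For $(2)\Rightarrow(1)$ your plan starts in the same place as the paper — Carath\'eodory kernel convergence, the Kulkarni--Pinkall decomposition, and Gehring--Hayman to control diameters of geodesics replacing the KP chords — but it diverges at the critical step, and it is precisely there that a genuine gap appears. After defining $H^t$ on the geodesic lamination replacing $\mathcal{J}$, you propose to extend over each hull $\hull(c)$ by a ``canonical interpolation,'' gesturing at passing to $\disk$ and ``matching boundary geodesics'' with midpoints as reference points. This is not an argument: the gaps of a lamination can be infinite-sided, their shapes in $U^t$ change with $t$, and nothing you say produces a well-defined homeomorphism of each gap onto its image, let alone one that is jointly continuous in $(z,t)$ and injective for each $t$. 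The paper deliberately avoids constructing anything in the interiors of the gaps. It extends $h$ only over $Y = X \cup \bigcup \lam^0$ (the lamination itself), observes that $Y$ is a plane continuum since each gap is simply connected, and then invokes the earlier result of \cite{ot10} that \emph{every} isotopy of a plane continuum extends. That reduction is the load-bearing step and your proposal replaces it with a step that would itself need a proof comparable in difficulty to the whole theorem.

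There is also a concrete false claim in your continuity argument at $X$: you assert that any KP hull containing a point near $x \in X$ is bounded by small-diameter chords. This fails already when $x$ is close to an endpoint of a long chord (for instance, when $U$ is a round disk the KP decomposition has a single large hull bounded by no small chords near the boundary). The paper's continuity proof (Claim~\ref{claim:cont}) does not assume this; instead it splits into cases according to whether the nearby lamination leaves $\g_i$ have diameter bounded below, and in the case $\diam(\g_i) \geq \e$ with $x_\infty$ an endpoint of $\lim \g_i$, it uses that the hyperbolic distance $\rho^0(x_i, \midpt(\g_i)) \to \infty$. Separately, showing that the ``moved'' geodesics $\lam^t$ actually form a hyperbolic lamination for $t>0$, with endpoints varying continuously, is not automatic from (2); the paper spends Lemma~\ref{Hlift}, Lemma~\ref{gtarc}, Lemma~\ref{gtconverge}, and Claim~\ref{claim:contangle} on exactly this, and your proposal takes it for granted. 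In short: the scaffolding is right, but the two hard parts (lamination structure for $t>0$, and what to do inside the gaps) are the places where your write-up is silent or incorrect, and the paper's route — extend over the lamination only, get a continuum, cite \cite{ot10} — is what makes the proof close.
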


It is trivial to see that condition (i) implies condition (ii) from \cref{main1}.

To obtain the converse, we will in fact prove a stronger characterization in \cref{main1technical} below.  To state this Theorem, we introduce the following simple condition:

\begin{defn}
Let $X \subset \complex$ be a compact set and let $h: X \times [0,1] \to \complex$ be an isotopy of $X$ starting at the identity.  We say that $X$ is \emph{encircled} if $X$ has a component which is a large circle $\Sigma$ such that $h^t |_\Sigma$ is the identity for all $t \in [0,1]$, and $X^t \sm \Sigma$ is contained in a compact subset of the bounded complementary domain of $\Sigma$ for all $t \in [0,1]$.
\end{defn}

Note that if (ii) from \cref{main1} holds, then we may additionally assume without loss of generality (i.e.\ without falsifying condition (ii) from \cref{main1}) that $X$ is encircled.

\subsection{Tracking bounded complementary domains}
\label{sec:tracking domains}

For the remainder of this section, we assume that $h: X \times [0,1] \to \complex$ is an isotopy of a compact set $X \subset \complex$ starting at the identity, such that $X^t$ is uniformly perfect for all $t \in [0,1]$ with the same constant $k$, and that $X$ is encircled.

Clearly such an isotopy can be extended over the unbounded complementary domain of $X$ as the identity for all $t \in [0,1]$.  Hence we only need to consider bounded complementary domains for the remainder of this section.

Let $U$ be a bounded complementary domain of $X$.  Choose a point $z_U \in U$.  Clearly the isotopy $h$ can be extended to an isotopy $h_U: (X \cup \{z_U\}) \times [0,1] \to \complex$ starting at the identity.  Define $U^t$ to be the complementary domain of $X^t$ which contains the point $h_U^t(z_U) = z_U^t$.  Let $\varphi_U^t: \disk \to U^t$ be the analytic covering map such that $\varphi_U^t(0) = z_U^t$ and $(\varphi_U^t)'(0) > 0$.  It is straightforward to see that if $t_n \to t_\infty$, then the pointed domains $\langle U^{t_n}, z_U^{t_n} \rangle$ converge to $\langle U^{t_\infty}, z_U^{t_\infty} \rangle$ in the sense of Carath\'{e}odory kernel convergence.  Hence, by \cref{caratheodory}, the covering maps $\varphi_U^{t_n}$ converge to $\varphi_U^{t_\infty}$ uniformly on compact subsets of $\disk$.  We will always assume that the complementary domains $U^t$ of $X^t$ and analytic covering maps $\varphi_U^t: \disk \to U^t$ are defined in this way.  It is clear that this definition of $U^t$ does not depend on the choices of $z_U$ and $h_U$.

The following Theorem is a stronger characterization of isotopies of uniformly perfect plane compacta that can be extended over the plane than the one given in \cref{main1}, in the sense that condition (ii) of \cref{main1technical} is weaker than condition (ii) of \cref{main1}.  We will in fact use this stronger characterization in \cref{sec:large components}.

\begin{thm}
\label{main1technical}
Suppose that $h: X \times [0,1] \to \complex$ is an isotopy of a compactum $X \subset \complex$ starting at the identity, such that $X^t$ is uniformly perfect with the same constant $k$ for each $t \in [0,1]$, and that $X$ is encircled.  Then the following are equivalent:
\begin{enumerate}
\item $h$ extends to an isotopy of the entire plane $\complex$;
\item For each bounded complementary domain $U$ of $X$ and each $\e > 0$ there exists $\delta > 0$ with the following property:

For any crosscut $Q$ in $U$ with endpoints $a,b \in \partial U$ and with $\diam(Q) < \delta$, there exists a family $\{\gamma_t: t \in [0,1]\}$ such that (1) $\gamma_t$ is a path in $U^t$ joining $a^t$ and $b^t$ for each $t \in [0,1]$, (2) $\gamma_0$ is homotopic to $Q$ in $U$ with endpoints fixed, (3) $\diam(\gamma_t([0,1])) < \e$ for all $t \in [0,1]$, and (4) there are lifts $\h{\gamma}_t$ of the paths $\gamma_t$ under $\varphi_U^t$ such that the sets $\h{\gamma}_t([0,1])$ vary continuously in $t$ with respect to the Hausdorff metric.
\end{enumerate}
\end{thm}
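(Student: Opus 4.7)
\textbf{The direction (i) $\Rightarrow$ (ii)} is essentially definitional. Given an ambient extension $H: \complex \times [0,1] \to \complex$ with $H^0 = \id_\complex$, set $\gamma_t := H^t \circ Q$ for any crosscut $Q$ of $U$. Uniform continuity of $H$ on compact sets yields the diameter bound (3) for $\delta$ small. For the lift condition (4), fix a base-point lift $\h z_0$ of $z_U \in U$ in $\disk$; standard covering-space theory produces a continuous family $\til H^t : \disk \to \disk$ with $\til H^0 = \id$ and $\varphi_U^t \circ \til H^t = H^t \circ \varphi_U^0$, and then $\h\gamma_t := \til H^t \circ \h\gamma_0$ is a lift of $\gamma_t$ varying continuously in the Hausdorff metric.

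\textbf{The substance is the direction (ii) $\Rightarrow$ (i)}, which I plan to prove complementary-domain-by-complementary-domain, extending by the identity outside the encircling circle $\Sigma$ and gluing together along $X$. Fix a bounded complementary domain $U$. The strategy is to transfer the problem into $\disk$ via the covering maps $\varphi_U^t$, build an ambient isotopy there, and push it back down. First, apply the Kulkarni--Pinkall decomposition (\cref{KP}) to partition $U$ into hyperbolically-convex hulls $\hull(c)$ with boundaries in the chord family $\mathcal J$. For each chord $\ell \in \mathcal J$, invoke condition (ii) with tolerance tuned to $\diam(\ell)$ to obtain paths $\gamma_t^\ell$ in $U^t$ joining the time-$t$ images of the endpoints of $\ell$, together with Hausdorff-continuous lifts $\h\gamma_t^\ell \subset \disk$ under $\varphi_U^t$. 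After replacing each $\h\gamma_t^\ell$ with the hyperbolic geodesic in $\disk$ having the same endpoints (the canonical representative of its relative homotopy class), the resulting family is a continuously-varying geodesic lamination of $\disk$ whose complementary regions correspond to the hulls $\hull(c)$. A canonical conewise interpolation from a consistently-chosen base point of each region (with the midpoint construction of \cref{thm:midpt} used to select base points continuously in $t$) then produces an ambient isotopy $\Psi_t$ of $\overline{\disk}$ starting at $\id$; projection gives $h_U^t = \varphi_U^t \circ \Psi_t \circ (\varphi_U^0)^{-1}$ on $U$, which I then extend continuously to $\overline U$ so as to agree with $h|_{X \cap \partial U}$.

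\textbf{Where uniform perfectness enters} is in verifying continuity at $\partial U$: the Gehring--Hayman estimate (\cref{gehrhay}) gives $\diam(\varphi_U^t(\h\gamma_t^\ell)) \leq K \cdot \diam(\gamma_t^\ell)$, so that a point $z$ in a hull whose bounding chords are small moves a proportionally small amount, uniformly in $t$; combined with \cref{caratheodory} for control of $\varphi_U^t$ as $t$ varies, this yields the boundary continuity. \textbf{The main obstacle} I anticipate is the consistency issue in the chord-motion step: arranging globally compatible choices of lifts $\h\gamma_t^\ell$ for the uncountable family $\mathcal J$, so that chords sharing an endpoint in $\partial U$ have lifts sharing the appropriate endpoint in $\partial \disk$, uniformly and continuously in $t$ and in the chord. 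Since $\mathcal J$ is uncountable and can contain uncountably many chords with the same endpoint pair, this compatibility has to be established by exhausting $\mathcal J$ by the compact subfamilies of chords of diameter $\geq 1/n$ (noted in \cref{sec:partitioning domains}) and a diagonal/limit argument, passing to the limit via \cref{caratheodory} and Gehring--Hayman, and using \cref{lift from bd} together with the uniqueness of lifts valid on perfect boundaries.
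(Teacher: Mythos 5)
Your (i) $\Rightarrow$ (ii) direction is fine; the paper essentially gets it for free from the trivial direction of \cref{main1} plus \cref{Hlift}. The substance is (ii) $\Rightarrow$ (i), and here your plan departs from the paper in a way that creates real gaps.

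First, the paper does \emph{not} build an ambient isotopy of $\overline{\disk}$ by a conewise interpolation over the gaps and then push it down. Instead it extends $h$ only over the one-dimensional lamination $\bigcup \lam^0$ (using the midpoint function of \cref{thm:midpt} applied to each chord, not to the two-dimensional regions), verifies that this extension $h_\lam$ is continuous (\cref{claim:cont}), observes that after doing this in every bounded complementary domain the resulting set $Y = X \cup (\text{all laminations})$ is a \emph{continuum} (because the gaps are simply connected), and then invokes the earlier theorem from \cite{ot10} that every isotopy of a plane continuum extends. Your ``canonical conewise interpolation from a base point of each region'' is in effect an attempt to re-prove \cite{ot10} from scratch; it is underspecified — gaps are arbitrary simply-connected regions, possibly infinite-sided, possibly accumulating on $\partial\disk$ — and the midpoint function of \cref{thm:midpt} applies to paths, not to planar regions, so it does not give you a base point of a gap. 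Reducing to the continuum case is a genuine shortcut you are missing.

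Second, the formula $h_U^t = \varphi_U^t \circ \Psi_t \circ (\varphi_U^0)^{-1}$ is not well defined as written: $\varphi_U^0$ is a (typically infinite-to-one) covering map, not a conformal isomorphism, so $(\varphi_U^0)^{-1}$ is multivalued. For the pushforward to make sense one must establish that $\Psi_t$ is equivariant with respect to the deck transformation group of $\varphi_U^0$ (and that these deck groups fit together as $t$ varies). You do not address this, and it is nontrivial.

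Third, the real technical heart of the paper's argument is the claim that the induced motion of $\partial\disk$ — the map sending $\h a \in \partial\disk$ (where $\varphi_U$ is defined) to $\h a^t$ — extends to an isotopy $\alpha^t:\partial\disk\to\partial\disk$ (\cref{claim:contangle}). This is proved by a crossing-parity argument with finitely many small simple closed curves enclosing $X$, using \cref{Riesz} and \cref{smallup}, and it is what guarantees that the time-$t$ geodesic families $\lam^t$ really are laminations (\cref{gtarc}, \cref{gtconverge}). Your proposal simply asserts that after replacing the lifts $\h\gamma_t^\ell$ by geodesics ``the resulting family is a continuously-varying geodesic lamination,'' and the ``diagonal/limit argument'' you sketch for the compatibility of lifts does not supply this; passing through compact subfamilies of $\mathcal{J}$ will not by itself show that the endpoint motion extends continuously and injectively over all of $\partial\disk$. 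Without that claim the lamination $\lam^t$ could degenerate (chords crossing, or endpoints colliding), and the extension would fail to be an embedding.
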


We have deliberately chosen to use subscripts in the notation for $\gamma_t$ (instead of superscripts like $\gamma^t$) to emphasize the point that the paths $\gamma_t$ are \emph{not} required to change continuously in the sense of an isotopy or homotopy -- we only require the weaker condition that the images of the lifts $\h{\gamma}_t$ vary continuously with respect to the Hausdorff metric.  Even though condition (ii) of \cref{main1technical} is more cumbersome to state, we demonstrate in \cref{sec:large components} that it is easier to apply.

The proofs of \cref{main1} and \cref{main1technical} will be completed in \cref{sec:proof main1} below.

\subsection{Lifts in moving domains}
\label{sec:lifts moving}

As in \cref{sec:tracking domains}, we continue to assume that $h: X \times [0,1] \to \complex$ is an isotopy of a compact set $X \subset \complex$ starting at the identity, such that $X^t$ is uniformly perfect for all $t \in [0,1]$ with the same constant $k$, and that $X$ is encircled.

We begin by proving two statements about lifts under the covering maps $\varphi_U^t$, in the spirit of the results from \cref{sec:analytic covering maps} above.

\begin{lem}
\label{smallup}
Let $U$ be a bounded complementary domain of $X$.  For every $\e > 0$ there exists $\delta > 0$ such that for any $t \in [0,1]$ if $\gamma$ is a path in $U^t$ with $\diam(\gamma([0,1])) < \delta$ and $\h{\gamma}$ is any lift of $\gamma$ under $\varphi_U^t$, then $\diam(\h{\gamma}([0,1])) < \e$.
\end{lem}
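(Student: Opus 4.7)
\emph{Approach.} The plan is to argue by contradiction, using the Gehring--Hayman inequality (\cref{gehrhay}) to transfer the diameter bound from $\gamma_n$ in the moving domains $U^{t_n}$ to hyperbolic geodesics in $\disk$, and then invoking Carath\'eodory kernel convergence (\cref{caratheodory}) to reach a contradiction through a limit argument. A key feature that makes this work is that because every $X^t$ is uniformly perfect with the \emph{same} constant $k$, the Gehring--Hayman constant $K$ in \cref{gehrhay} is independent of $t$.

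\emph{Extracting sequences.} Assuming the conclusion fails for some $\e > 0$, I would produce $\delta_n \searrow 0$, $t_n \in [0,1]$, paths $\gamma_n$ in $U^{t_n}$ with $\diam(\gamma_n([0,1])) < \delta_n$, and lifts $\h{\gamma}_n$ under $\varphi_U^{t_n}$ with $\diam(\h{\gamma}_n([0,1])) \geq \e$. Passing to a subsequence I may assume $t_n \to t_\infty$. Choose parameter values $s_n, r_n \in [0,1]$ with $|\h{\gamma}_n(s_n) - \h{\gamma}_n(r_n)| \geq \e$; by compactness of $\overline{\disk}$, after a further subsequence I obtain $\h{u}_n := \h{\gamma}_n(s_n) \to \h{u}_\infty$ and $\h{v}_n := \h{\gamma}_n(r_n) \to \h{v}_\infty$ in $\overline{\disk}$, with $|\h{u}_\infty - \h{v}_\infty| \geq \e$.

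\emph{The geodesic bound and its limit.} Let $\h{g}_n \subset \disk$ denote the hyperbolic geodesic with endpoints $\h{u}_n, \h{v}_n$, and let $\h{\Gamma}_n$ be the sub-path of $\h{\gamma}_n$ joining these two points. Since these curves share endpoints, \cref{gehrhay} applied in $U^{t_n}$ gives
\[ \diam\bigl(\varphi_U^{t_n}(\h{g}_n)\bigr) \leq K \cdot \diam\bigl(\varphi_U^{t_n}(\h{\Gamma}_n)\bigr) \leq K \cdot \diam(\gamma_n([0,1])) < K\delta_n \to 0. \]
The geodesics $\h{g}_n$ are circular arcs (or straight segments) perpendicular to $\partial\disk$, depending continuously on their endpoints in $\overline{\disk}$, so they converge in the Hausdorff metric on $\overline{\disk}$ to the geodesic $\h{g}_\infty$ joining $\h{u}_\infty$ to $\h{v}_\infty$. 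Since $\h{u}_\infty \neq \h{v}_\infty$, the arc $\h{g}_\infty$ is non-degenerate and meets $\disk$ in a sub-arc of positive Euclidean length.

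\emph{Closing via Carath\'eodory.} Because $\varphi_U^{t_\infty}$ is a local homeomorphism, I pick a non-degenerate compact sub-arc $g_0 \subset \h{g}_\infty \cap \disk$ lying in an open set on which $\varphi_U^{t_\infty}$ is injective, so that $\varphi_U^{t_\infty}(g_0)$ is a non-degenerate arc. Fix a compact $N \subset \disk$ containing $g_0$ in its interior; for large $n$, $\h{g}_n \cap N$ contains an arc $g_n$ that converges to $g_0$ in Hausdorff distance. By \cref{caratheodory}, $\varphi_U^{t_n} \to \varphi_U^{t_\infty}$ uniformly on $N$, so $\varphi_U^{t_n}(g_n) \to \varphi_U^{t_\infty}(g_0)$ in the Hausdorff metric. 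On the other hand, $\diam(\varphi_U^{t_n}(g_n)) \leq \diam(\varphi_U^{t_n}(\h{g}_n)) \to 0$, forcing $\diam(\varphi_U^{t_\infty}(g_0)) = 0$ and contradicting the non-degeneracy of $\varphi_U^{t_\infty}(g_0)$. The chief obstacle I anticipate is the case in which $\h{u}_\infty$ or $\h{v}_\infty$ lies on $\partial\disk$: then the $\h{g}_n$ are not eventually contained in any fixed compact subset of $\disk$, so \cref{caratheodory} cannot be applied to $\h{g}_n$ itself. The device of transferring the vanishing-diameter estimate from the full geodesic $\h{g}_n$ to the interior sub-arc $g_n$, where local injectivity of $\varphi_U^{t_\infty}$ is available, is what circumvents this difficulty.
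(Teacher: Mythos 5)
Your proof is correct and follows essentially the same line as the paper's: argue by contradiction, apply Gehring--Hayman (uniformly in $t$, since the uniform perfectness constant $k$ is fixed) to the geodesic joining two far-apart points on the lift, and use Carath\'eodory convergence together with the geometry of geodesics in $\disk$ to conclude that $\varphi_U^{t_\infty}$ would have to collapse a non-degenerate arc in the open disk to a point. The only cosmetic difference is that the paper locates the non-degenerate compact continuum in $\disk$ by observing directly that each geodesic of diameter $\geq \e/2$ passes through a point of modulus $\leq 1-\eta$, whereas you first pass the endpoints to a limit and then extract a compact sub-arc of the limiting geodesic; the two devices accomplish the same thing.
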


\begin{proof}
Suppose the lemma fails.  Then there exists $\e > 0$, a sequence $\gamma_i$ of paths in $U^{t_i}$ and lifts $\h{\gamma}_i$ such that $\lim \diam(\gamma_i([0,1])) = 0$ and $\diam(\h{\gamma}_i([0,1])) \geq \e$ for all $i$.  Choose two points $\h{a}_i, \h{b}_i$ in $\h{\gamma}_i([0,1])$ such that $|\h{a}_i - \h{b}_i| > \frac{\e}{2}$, and let $\h{\g}_i$ be the hyperbolic geodesic with endpoints $\h{a}_i$ and $\h{b}_i$.  Put $\varphi_U^{t_i}(\h{\g}_i) = \g_i$.  By \cref{gehrhay}, $\diam(\g_i) \to 0$.  Since the geodesics $\h{\g}_i$ are pieces of round circles or straight lines which cross $\partial \disk$ perpendicularly and have diameter bigger than $\frac{\e}{2}$, there exist $\eta > 0$ and points $\h{x}_i \in \h{\g}_i$ such that $|\h{x}_i| \leq 1 - \eta$ for all $i$.  By choosing a subsequence we may assume that $t_i \to t_\infty$, $\h{x}_i \to \h{x}_\infty \in \disk$, and $\lim \g_i = z_\infty$ is a point in $\overline{U^{t_\infty}}$.  Let $K_i$ be the component of $\h{\g}_i\cap B(\h{x}_\infty,\frac{\eta}{2})$ containing the point $\h{x}_i$.  We may assume $K_i \to K_\infty$, where $K_\infty$ is a non-degenerate continuum in $\disk$.  Since $\varphi_U^{t_i} \to \varphi_U^{t_\infty}$ uniformly on compact sets in $\disk$, $\varphi_U^{t_\infty}(K_\infty) = z_\infty$, which is a contradiction since $\varphi_U^{t_\infty}$ is a covering map.
\end{proof}

Given a homotopy $\Gamma: [0,1] \times [0,1] \to \complex$ we denote for each $t \in [0,1]$ $\Gamma^t = \Gamma|_{[0,1]\times\{t\}}: [0,1] \to \complex$.

\begin{lem}
\label{Hlift}
Let $U$ be a bounded complementary domain of $X$.  Suppose that $\Gamma: [0,1] \times [0,1] \to \complex$ is a homotopy with $\Gamma^t(0) = h^t(\Gamma^0(0)) \in \partial U^t$ and $\Gamma^t(s) \in U^t$ for all $s \in (0,1]$ and all $t \in [0,1]$.  Let $\h{z} \in \disk$ be such that $\varphi_U^0(\h{z}) = \Gamma^0(1)$.  Then there exists a homotopy $\h{\Gamma}: [0,1] \times [0,1] \to \overline{\disk}$ lifting $\Gamma$, i.e.\ $\varphi_U^t \circ \h{\Gamma}^t = \Gamma^t$ for all $t \in [0,1]$, and such that $\h{\Gamma}^0(1) = \h{z}$.
\end{lem}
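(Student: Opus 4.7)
The plan is to construct $\h{\Gamma}$ in three stages. First, lift the endpoint path $t \mapsto \Gamma^t(1)$ to a continuous curve $t \mapsto \h{z}^t$ in $\disk$ starting at $\h{z}^0 = \h{z}$. Second, for each fixed $t$, apply \cref{lift} to define $\h{\Gamma}^t$ as the unique lift of $\Gamma^t$ with $\h{\Gamma}^t(1) = \h{z}^t$. Third, verify that the resulting map $\h{\Gamma}(s,t) = \h{\Gamma}^t(s)$ is jointly continuous on $[0,1]^2$.

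For Phase 1, I would use that analytic covering maps have nowhere-vanishing derivative, so \cref{caratheodory} together with the implicit function theorem produces, near any $t_0 \in [0,1]$ and any $\h{w}_0$ with $\varphi_U^{t_0}(\h{w}_0) = \Gamma^{t_0}(1)$, a continuous local inverse branch $\psi_t(w)$ of $\varphi_U^t$ defined for $(t,w)$ in a neighborhood of $(t_0, \Gamma^{t_0}(1))$. A standard clopen argument in $t$ (openness from the local branches, closedness from a compactness bound on $|\h{z}^t|$ coming from the uniform convergence $\varphi_U^t \to \varphi_U^{t_\infty}$ on compact subsets of $\disk$) then propagates the lift to all of $[0,1]$. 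Phase 2 is immediate from \cref{lift}.

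For Phase 3, I would first establish joint continuity on $(0,1] \times [0,1]$, where $\h{\Gamma}$ takes values in the interior $\disk$, by a compactness argument. Fix $t_0 \in [0,1]$ and $\alpha \in (0,1]$. The compact set $\h{\Gamma}^{t_0}([\alpha,1]) \subset \disk$ can be covered by finitely many open disks $D_1, \ldots, D_N$ on each of which $\varphi_U^{t_0}$ is univalent; by \cref{caratheodory} and a Hurwitz-type argument, each $\varphi_U^t$ remains univalent on these disks for $t$ in some neighborhood $I$ of $t_0$, yielding continuous local inverse branches $\psi_i^{(t)}$. Partitioning $[\alpha,1]$ into subintervals whose $t_0$-lifts each lie in a single $D_i$, and piecing the functions $\psi_i^{(t)} \circ \Gamma^t$ together, produces a continuous candidate lift on $[\alpha,1] \times I$; the uniqueness clause of \cref{lift} forces this candidate to coincide with $\h{\Gamma}|_{[\alpha,1] \times I}$. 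Since $\alpha$ and $t_0$ are arbitrary, joint continuity holds on $(0,1] \times [0,1]$.

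Continuity on the remaining boundary $\{0\} \times [0,1]$ is where \cref{smallup} does the essential work. Given $\e > 0$, let $\delta$ be the constant supplied by \cref{smallup}, and use uniform continuity of $\Gamma$ on $[0,1]^2$ to pick $\eta > 0$ with $\diam(\Gamma^t([0,s])) < \delta$ for all $s < \eta$ and all $t$; \cref{smallup} then yields the equi-estimate $|\h{\Gamma}^t(s) - \h{\Gamma}^t(0)| < \e$ uniformly in $t$. Fixing a small interior value of $s$ and combining this with the joint continuity already established on $(0,1]\times[0,1]$ via a routine three-term triangle inequality delivers continuity of $\h{\Gamma}$ at every $(0,t_0)$. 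I expect the main technical obstacle to be Phase 3a: the local analytic inverse branches $\psi_i^{(t)}$ of the varying covering maps $\varphi_U^t$ must be carefully matched with the globally determined lifts $\h{\Gamma}^t$, and the link between them is the uniqueness clause of \cref{lift}, which has to be invoked each time a local branch is used to propagate continuity across the partition of $[\alpha,1]$.
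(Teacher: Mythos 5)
Your argument is correct and lands in the same places as the paper's proof --- fix the endpoint, lift each time slice by \cref{lift}, prove joint continuity on $(0,1] \times [0,1]$, and handle the boundary $\{0\} \times [0,1]$ with \cref{smallup} plus a three-term estimate --- but the paper uses a cleaner organizing device for the first two of these steps. Rather than building local inverse branches, a clopen argument in $t$, and a Hurwitz-type univalence-propagation argument by hand, the paper proves a single claim: the product map $\Psi(z,t) = (\varphi_U^t(z), t)$ is a covering map of $\disk \times [0,1]$ onto $\bigcup_t (U^t \times \{t\})$. After this, your Phase 1 (lifting the path $t \mapsto \Gamma^t(1)$) and your Phase 3a (joint continuity of the restriction to $[s,1] \times [0,1]$ for each $s > 0$) are immediate from standard covering-space theory. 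This also absorbs the one slightly loose spot in your write-up: in Phase 1 you assert that closedness of the clopen set follows from ``a compactness bound on $|\h{z}^t|$ coming from the uniform convergence on compact subsets of $\disk$,'' but locally uniform convergence says nothing about behavior near $\partial\disk$ and does not by itself rule out the lift escaping toward the boundary. The correct justification is either that $\varphi_U^t$ is a local hyperbolic isometry, so the hyperbolic distance from $0$ to $\h{z}^t$ in $\disk$ equals the hyperbolic distance from $z_U^t$ to $\Gamma^t(1)$ in $U^t$ and hence stays bounded, or --- equivalently and more in the spirit of your write-up --- that the sheets of $(\varphi_U^t)^{-1}(V)$ over a fixed evenly-covered neighborhood $V$ of $\Gamma^{t^*}(1)$ vary stably in $t$, which is exactly what the paper's $\Psi$-covering-map claim records once and for all. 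Your Phase 3b matches the paper's boundary continuity argument essentially verbatim.
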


\begin{proof}
Define $\Psi: \disk \times [0,1] \to \bigcup_{t \in [0,1]} (U^t \times \{t\})$ by $\Psi(z,t) = (\varphi_U^t(z),t)$ for $t \in [0,1]$ and $z \in \disk$.

\begin{claiminproof}
\label{claim:Psi covering}
$\Psi$ is a covering map.
\end{claiminproof}

\begin{proof}[Proof of \cref{claim:Psi covering}]
\renewcommand{\qedsymbol}{\textsquare (\cref{claim:Psi covering})}
Let $(y_0,t_0) \in U^{t_0} \times \{t_0\}$.  Choose a small simply connected neighborhood $V$ of $y_0$ and $\delta > 0$ such that $\overline{V} \cap X^t = \0$ and $V$ is evenly covered by $\varphi_U^t$ for all $t$ with $|t - t_0| \leq \delta$.  Hence, $V \times (t_0-\delta, t_0+\delta)$ is a simply connected neighborhood of $(y_0,t_0)$ in $\bigcup_{t \in [0,1]} (U^t \times \{t\})$.

Next let $(x_0,t_0) \in \Psi^{-1}((y_0,t_0))$.  Since the covering maps $\varphi_U^t$ are uniformly convergent on compact sets, it is not difficult to see that there exists a map $g: (t_0-\delta, t_0+\delta) \to \disk \times [0,1]$ such that $g(t_0) = (x_0,t_0)$ and $\Psi \circ g(t) = (y_0,t)$ for all $t$ with $|t - t_0| < \delta$.

For each $t$ with $|t - t_0| < \delta$, let $x \in U^t$ be such that $g(t) = (x,t)$, and let $W^t$ be the component of $(\varphi_U^t)^{-1}(V)$ which contains the point $x$.  Let $W = \bigcup_{t \in (t_0-\delta, t_0+\delta)} (W^t \times \{t\})$.  Then it is not difficult to see that $\Psi |_W: W \to V \times (t_0-\delta, t_0+\delta)$ is a homeomorphism.  Thus $\Psi$ is a covering map.
\end{proof}

Define $\alpha: [0,1] \times [0,1] \to \bigcup_{t \in [0,1]} (U^t \times \{t\})$ by $\alpha(s,t) = (\Gamma^t(s),t)$.  Define the lift $\h{\alpha}$ of $\alpha$ under $\Psi$ as follows: first lift $\alpha |_{\{1\} \times [0,1]}$, using the covering map $\Psi$, to define $\h{\alpha} |_{\{1\} \times [0,1]}$ such that $\h{\alpha}(1,0) = (\h{z},0)$.  Next, for each $t \in [0,1]$, use \cref{lift} to lift $\alpha |_{[0,1] \times \{t\}}$ to define $\h{\alpha} |_{[0,1] \times \{t\}}$, so that this lift coincides with the first lift of $\alpha |_{\{1\} \times [0,1]}$ at $(1,t)$.  Finally, define $\h{\Gamma} = \pi_1 \circ \h{\alpha}$, where $\pi_1$ denotes the first coordinate projection.

Observe that for all $s \in (0,1]$, the function $\h{\alpha} |_{[s,1] \times [0,1]}$ is the unique lift of $\alpha |_{[s,1] \times [0,1]}$ under the covering map $\Psi$ with $\h{\alpha}(1,0) = \h{z}$, hence is continuous by standard covering map theory.  It follows that $\h{\alpha}$, and hence $\h{\Gamma}$, is continuous on $(0,1] \times [0,1]$.  It remains to prove that $\h{\Gamma}$ is continuous at all points of the form $(0,t_0)$.

Fix $t_0 \in [0,1]$ and $\e > 0$.  Choose $\delta > 0$ small enough (using \cref{smallup}) so that for any $t \in [0,1]$ and any open arc $D$ in $U^t$ of diameter less than $\delta$, each lift $\h{D}$ of $D$ under $\varphi_U^t$ has diameter less than $\frac{\e}{3}$.

Choose $\eta_1,\eta_2 > 0$ small enough so that:
\begin{enumerate}
\item $|\h{\Gamma}^{t_0}(0) - \h{\Gamma}^{t_0}(\eta_1)| < \frac{\e}{3}$ (this is possible since the lifted path $\h{\Gamma}^{t_0}$ is continuous);
\item $|\h{\Gamma}^{t}(\eta_1) - \h{\Gamma}^{t_0}(\eta_1)| < \frac{\e}{3}$ for each $t \in [t_0-\eta_2, t_0+\eta_2]$ (this is possible since we already know that $\h{\Gamma}$ is continuous on $(0,1] \times [0,1]$); and
\item $\Gamma([0,\eta_1] \times [t_0-\eta_2, t_0+\eta_2]) \subset B(\Gamma^{t_0}(0),\frac{\delta}{2})$ (this is possible since $\Gamma$ is continuous).
\end{enumerate}

Now for any $t \in [t_0-\eta_2, t_0+\eta_2]$, the image $\Gamma^t([0,\eta_1])$ has diameter less than $\delta$, hence $\h{\Gamma}^t([0,\eta_1])$ has diameter less than $\frac{\e}{3}$.  It follows that $\h{\Gamma}^t([0,\eta_1]) \subset B(\h{\Gamma}^{t_0}(0), \e)$.  So $[0,\eta_1) \times (t_0-\eta_2, t_0+\eta_2)$ is a neighborhood of $(0,t_0)$ which is mapped by $\h{\Gamma}$ into $B(\h{\Gamma}^{t_0}(0), \e)$.  Thus $\h{\Gamma}$ is continuous at $(0,t_0)$.
\end{proof}

Observe that in light of \cref{Hlift}, condition (ii) of \cref{main1} is stronger than condition (ii) of \cref{main1technical}.  Therefore to complete the proofs of both \cref{main1} and \cref{main1technical}, we must prove that if condition (ii) of \cref{main1technical} holds then the isotopy $h$ extends to the entire plane $\complex$.  Hence we will assume for the remainder of this section that condition (ii) of \cref{main1technical} holds.

\begin{notation}[$\h{a}^t$]
Let $\h{a} \in \partial \disk$ be any point at which $\varphi_U$ is defined (i.e.\ at which the radial limit of $\varphi_U$ exists).  Using any sufficiently small crosscut $Q$ in $U$ which has one endpoint equal to $a = \varphi_U(\h{a})$ and which is the image of a crosscut of $\disk$ having one endpoint equal to $\h{a}$, we obtain from condition (ii) of \cref{main1technical} a family of paths $\{\gamma_t: t \in [0,1]\}$ and lifts $\h{\gamma}_t$ with the properties listed there, and such that $\gamma_t(0) = a^t$ for each $t \in [0,1]$, and $\h{\gamma}_0(0) = \h{a}$.  Because the sets $\h{\gamma}_t([0,1])$ vary continuously in $t$ with respect to the Hausdorff metric, the endpoint $\h{\gamma}_t(0)$ moves continuously in $t$.  Now we define $\h{a}^t = \h{\gamma}_t(0)$ for each $t \in [0,1]$.  Then $\h{a}^0 = \h{a}$ and $\varphi_U(\h{a}^t) = a^t$ for all $t \in [0,1]$.  It is straightforward to see that this definition of $\h{a}^t$ is independent of the choice of crosscut $Q$ and of the paths $\gamma_t$ and lifts $\h{\gamma}_t$ afforded by condition (ii) of \cref{main1technical}.  Thus, in the presence of condition (ii) of \cref{main1technical}, we can extend the superscript $t$ notation to points in $\partial \disk$ at which $\varphi_U$ is defined.  We will assume this is done for all such points $\h{a} \in \partial \disk$ for the remainder of this section.
\end{notation}

\subsection{Hyperbolic laminations}
\label{sec:laminations}

The following condition on a set of hyperbolic geodesics $\lam$ is inspired by a similar notion introduced by Thurston (cf.\ \cite{thur85}).

\begin{defn}
\label{d:lamination}
A \emph{hyperbolic lamination} $\lam$ in a bounded domain $U \subset \complex$ is a closed set of pairwise disjoint hyperbolic geodesic crosscuts in $U$ such that two distinct crosscuts in $\lam$ are disjoint and have \emph{at most one} common endpoint in the boundary of $U$ and the family of crosscuts in $\lam$ of diameter greater or equal $\e$ is compact for any $\e > 0$.

We denote by $\bigcup \lam$ the union of all the crosscuts in $\lam$.  A \emph{gap} of $\lam$ is the closure of a component of $U \sm \bigcup \lam$.
\end{defn}

The compactness condition in \cref{d:lamination} is equivalent to the following statement: if $\langle \g_n \rangle_{n=1}^\infty$ is a sequence of elements of $\lam$, then either $\diam(\g_n) \to 0$, or there is a convergent subsequence whose limit is also an element of $\lam$.

Fix a bounded complementary domain $U$ of $X$.  Recall the Kulkarni-Pinkall construction described in \cref{sec:partitioning domains}: we consider the collection $\mathcal{B}$ of all open disks $B(c,r) \subset U$ such that $|\partial B(c,r) \cap \partial U| \geq 2$.  For each such disk $B(c,r)$, $\hull(c)$ denotes the convex hull of the set $\partial B(c,r(c)) \cap \partial U$ in $B(c,r(c))$ \emph{using the hyperbolic metric $\rho_c$ on the disk $B(c,r(c))$}.  Let $\mathcal{J}$ be the collection of all crosscuts of $U$ which are contained in the boundaries of the sets $\hull(c)$ for $B(c,r) \in \mathcal{B}$.

Let
\[ \h{\mathcal{J}} = \{\h{Q}: \h{Q} \textrm{ is a component of } \varphi_U^{-1}(Q) \textrm{ for some } Q \in \mathcal{J}\} .\]
For any $Q \in \mathcal{J}$, it is straightforward to see that each component $\h{Q}$ of $\varphi_U^{-1}(Q)$ is an open arc whose closure is mapped homeomorphically onto $\overline{Q}$ by $\varphi_U$.

Given an (open) arc $A$, we denote the set of endpoints of $A$ by $\arcends(A)$; that is, $\arcends(A) = \{a,b\}$ means that $a$ and $b$ are the endpoints of (the closure of) $A$.  Let $\mathcal{J}_\arcends = \{\arcends(Q): Q \in \mathcal{J}\}$, and let $\h{\mathcal{J}}_\arcends = \{\arcends(\h{Q}): \h{Q} \in \h{\mathcal{J}}\}$.  These are sets of (unordered) pairs.

For each $t \in [0,1]$, let
\begin{align*}
\h{\lam}^t = \{\h{\g}^t: & \h{\g}^t \textrm{ is the hyperbolic geodesic in } \disk \\
& \textrm{ joining } \h{a}^t,\h{b}^t, \textrm{ where } \{\h{a},\h{b}\} \in \h{\mathcal{J}}_\arcends\}
\end{align*}
and let
\[ \lam^t = \{\varphi_{U}^t(\h{\g}^t): \h{\g}^t \in \h{\lam}^t\} .\]

Observe that $\lam^0$ is the collection of all hyperbolic geodesic crosscuts of $U^0 = U$ which are homotopic (with endpoints fixed) to some crosscut in $\mathcal{J}$.  For $t > 0$, the collection $\lam^t$ is obtained from $\lam^0$ by following the motion of the endpoints of the arcs in $\lam^0$ under the isotopy and joining the resulting points in $\partial U^t$ by the hyperbolic geodesic crosscut $\g^t = \varphi_{U}^t(\h{\g}^t)$ in $U^t$ using the hyperbolic metric induced by $\varphi_U^t$.  We do \emph{not} consider a Kulkarni-Pinkall style partition of the domain $U^t$ for $t > 0$.

We shall prove that $\lam^t$ is a hyperbolic lamination in $U^t$ for each $t \in [0,1]$.  We start with the following lemma.

\begin{lem}
\label{gtarc}
For any $t \in [0,1]$ and any $\h{\g}^t \in \h{\lam}^t$, the map $\varphi_U^t$ is one-to-one on $\h{\g}^t$ and, hence, the corresponding element $\g^t = \varphi_U^t(\h{\g}^t) \in \lam^t$ is a crosscut in $U^t$.  Moreover, if $\g_1^t,\g_2^t$ are two distinct elements of $\lam^t$, then $\g_1^t \cap \g_2^t = \0$ (though their closures may have at most one common endpoint in $\partial U^t$).
\end{lem}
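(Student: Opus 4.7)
The plan is to use a standard dichotomy for hyperbolic geodesics in $\disk$: two distinct complete hyperbolic geodesics in $\disk$ with endpoints in $\partial \disk$ are disjoint inside $\disk$ exactly when their endpoint pairs are not strictly interlinked on $\partial \disk$ (they may share at most one common endpoint, touching only on $\partial \disk$). I will reduce both assertions to the fact that distinct components of $\varphi_U^{-1}(Q)$ for a KP chord $Q \in \mathcal{J}$ are disjoint crosscuts of $\disk$, so their endpoint pairs on $\partial \disk$ are non-interlinked, and likewise for lifts of distinct $Q_1, Q_2 \in \mathcal{J}$.

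\textbf{The case $t = 0$.} Suppose $\varphi_U$ is not one-to-one on $\h{\g}^0$. Then some nontrivial deck transformation $T$ of $\varphi_U$ satisfies $T(\h{\g}^0) \cap \h{\g}^0 \neq \0$ in $\disk$. Provided $T(\h{\g}^0) \neq \h{\g}^0$ (the degenerate case is addressed in the Obstacle below), the intersection forces transverse crossing inside $\disk$, hence interlinked endpoint pairs. But the endpoints of $\h{\g}^0$ and $T(\h{\g}^0)$ coincide with those of $\h{Q}$ and $T(\h{Q})$, and since $T$ permutes the components of $\varphi_U^{-1}(Q)$ freely, $T(\h{Q}) \neq \h{Q}$ is a disjoint component, so its endpoint pair cannot interlink with that of $\h{Q}$ -- contradiction. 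For the disjointness assertion at $t = 0$: if $\g_1^0 \neq \g_2^0$ in $\lam^0$ meet in $U$, some $T$ gives $T(\h{\g}_1^0) \cap \h{\g}_2^0 \neq \0$ in $\disk$ with $T(\h{\g}_1^0) \neq \h{\g}_2^0$ (else $\g_1^0 = \g_2^0$), yielding interlinked endpoints of $T(\h{Q}_1)$ and $\h{Q}_2$; but these are disjoint in $\disk$ because $Q_1, Q_2 \in \mathcal{J}$ do not cross in $U$.

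\textbf{Propagation to $t > 0$.} I will use the covering $\Psi$ from the proof of \cref{Hlift} to associate to each deck transformation $T = T^t$ of $\varphi_U^t$ a unique continuous family $\{T^s\}_{s \in [0,1]}$ of deck transformations of $\varphi_U^s$ with $T^t = T$. Applying $T^s$ to a defining lift from condition (ii) of \cref{main1technical} yields the identity $T^s(\h{x}^s) = (T^0(\h{x}))^s$ for every $\h{x} \in \partial \disk$ to which the superscript notation applies (both sides are continuous lifts of $x^s$ agreeing at $s=0$, and uniqueness of such lifts matches them for all $s$). Consequently, the four endpoints $\h{a}^s, \h{b}^s, (T^0(\h{a}))^s, (T^0(\h{b}))^s$ vary continuously on $\partial \disk$, and no two of them can collide without forcing an equality at $s=0$ that would contradict $T^0 \neq \id$, $\h{a} \neq \h{b}$, or an already-present shared endpoint (which is permitted since it corresponds only to touching on $\partial \disk$). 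Hence the non-interlinking of endpoint pairs at $s=0$ persists to $s=t$, giving the same contradiction as in the base case. The disjointness assertion at time $t$ is handled in the same manner, transporting $T(\h{Q}_1)$ and $\h{Q}_2$ to $s=0$ and invoking disjointness in $\disk$.

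\textbf{Main obstacle.} The delicate subcase is the possibility $T(\h{\g}^0) = \h{\g}^0$ for some nontrivial $T$, so that $T$ is a hyperbolic isometry with axis $\h{\g}^0$ fixing both $\h{a}$ and $\h{b}$: only two distinct endpoints are present, so the interlinking argument is vacuous, yet $\varphi_U$ is not injective along $\h{\g}^0$. Excluding this at $t=0$ must leverage the geometry of KP chords -- $\h{Q}$ is a chord of a lift $\h{B}$ of the inscribed disk $B(c,r(c))$, with $\h{a}, \h{b}$ the two points where $\partial \h{B}$ meets $\partial \disk$; if $T$ fixed both, then $T(\h{B})$ would be a second, disjoint lift of the same simply connected region accumulating on $\partial \disk$ at the same two points, which I expect to be incompatible with standard properties of the universal cover near accessible boundary points. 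Once excluded at $t=0$, the continuity argument of the previous paragraph prevents the pathological configuration from arising at any positive time.
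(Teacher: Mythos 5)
Your approach mirrors the paper's: establish pairwise disjointness of the lifted geodesics at $t=0$ from the covering structure (distinct components of $\varphi_U^{-1}(Q)$ are disjoint crosscuts, hence their endpoint pairs on $\partial\disk$ are non-interlinked), and then propagate to $t>0$ by continuity of the endpoints $\h{a}^t,\h{b}^t$ together with the distinctness $a^t\neq b^t$. The paper states the propagation directly in terms of continuously varying endpoints, whereas you reformulate it by continuously tracking deck transformations via the covering $\Psi$ from \cref{Hlift}; the underlying idea is the same.

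The genuine gap is exactly what you call the ``main obstacle'': ruling out a nontrivial deck transformation $T$ with $T(\h{\g}^0)=\h{\g}^0$, which is the content of the assertion that $\varphi_U$ is one-to-one on $\h{\g}^0$ (equivalently, that $\g^0$ is an arc). The paper's proof opens with ``Since $\g^0$ is an arc\ldots'', treating this as the standard fact that a hyperbolic domain with uniformly perfect boundary has geodesic representatives of crosscuts; it does not give the argument you are asking for. Your proposed route via lifts of the Kulkarni--Pinkall inscribed disk is not the clean way to close this (and note $\partial B(c,r(c))\cap\partial U$ may contain more than two points, so $\h{a},\h{b}$ need not be ``the'' two intersection points of $\partial\h{B}$ with $\partial\disk$). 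The standard resolution uses \cref{gehrhay}: since $\h{\g}^0$ and $\h{Q}$ share the same endpoints and $\varphi_U(\h{Q})=Q$ converges to $a$ and $b$ at its two ends, Gehring--Hayman forces $\varphi_U(\h{\g}^0)$ to converge to $a$ and $b$ as well. If a nontrivial $T$ fixed $\h{\g}^0$ setwise, $T$ would be hyperbolic with axis $\h{\g}^0$, so $\varphi_U(\h{\g}^0)$ would be a closed geodesic of $U$, compactly contained in $U$ and bounded away from $\partial U$ --- contradicting the boundary convergence just established. Inserting this closes the gap, after which your propagation argument (and the analogous one for distinct $\g_1^t,\g_2^t$) goes through.
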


\begin{proof}
Let $\g^0$ be an arbitrary hyperbolic crosscut of $\lam^0$ with endpoints, $a$ and $b$.  By the discussion at the end of \cref{sec:lifts moving}, we can lift $\g^0$ to geodesics $\h{\g}^t$ with continuously varying endpoints.  Let $\h{a}^t$ ($\h{b}^t$) be the endpoints of $\h{\g}^t$ corresponding to $a^t$ ($b^t$, respectively).  Since $\g^0$ is an arc, all components $\h{\g}^0$ of $\varphi_U^{-1}(\g^0)$ are pairwise disjoint geodesic crosscuts of $\disk$.  Since the endpoints of all these crosscuts move continuously in $t$ and the points $a^t$ and $b^t$ are distinct, the geodesics $\h{\g}^t$ are also pairwise disjoint open arcs for all $t$.  Hence, $\varphi_U^t$ is one-to-one on each of these crosscuts and their common image is a geodesic arc $\g^t$.  By a similar argument, the lifts $\h{\g}_1^t$ and $\h{\g}_2^t$ of two distinct geodesics $\g_1^t$ and $\g_2^t$ in $\lam^t$ are pairwise disjoint in $\disk$ and, hence, $\g_1^t \cap \g_2^t = \0$.  It follows easily from the construction that two distinct geodesics in $\lam^0$ share at most one common endpoint and, hence, the same is true for $\lam^t$.
\end{proof}

To prove $\lam^t$ is a hyperbolic lamination in $U^t$ for each $t \in [0,1]$, it remains to show that the collection of arcs in $\lam^t$ of diameter at least $\e$ is compact for every $\e > 0$.  This will follow from the next Lemma, which states that even for varying $t$, the limit of a convergent sequence of elements of the corresponding $\lam^t$ collections must belong to the limit $\lam^t$ collection as well.

\begin{lem}
\label{gtconverge}
Let $\{a_1,b_1\}, \{a_2,b_2\}, \ldots$ be a sequence of pairs in $\mathcal{J}_\arcends$ such that $a_n \to a_\infty$ and $b_n \to b_\infty$, where $a_\infty$ and $b_\infty$ are distinct points in $\partial U$.  Then $\{a_\infty,b_\infty\} \in \mathcal{J}_\arcends$.

Furthermore, let $t_1,t_2,\ldots \in [0,1]$ be a sequence such that $t_n \to t_\infty \in [0,1]$.  For each $n \in \{1,2,\ldots\} \cup \{\infty\}$ and each $t \in [0,1]$, let $\g_n^t \in \lam^t$ be the geodesic with endpoints $a_n^t$ and $b_n^t$.  Then $\g_n^{t_n} \to \g_\infty^{t_\infty}$ in the sense that there exist homeomorphisms $\theta_n: \g_\infty^{t_\infty} \to \g_n^{t_n}$ such that $\theta_n \to \id$.
\end{lem}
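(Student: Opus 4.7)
For Part 1, apply the compactness property of $\mathcal{J}$ stated in Section 2.2: the subfamily of chords of diameter at least $\e$ is compact, and any convergent sequence in $\mathcal{J}$ limits to either a chord in $\mathcal{J}$ or a single point of $\partial U$. Choose chords $Q_n \in \mathcal{J}$ with $\arcends(Q_n) = \{a_n, b_n\}$; since $\diam(\overline{Q_n}) \geq |a_n - b_n| \to |a_\infty - b_\infty| > 0$, a subsequence converges in Hausdorff metric, and the limit cannot collapse to a point, so it is a chord $Q_\infty \in \mathcal{J}$ with $\arcends(Q_\infty) = \{a_\infty, b_\infty\}$.

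For Part 2, after passing to the subsequence from Part 1, the plan is to lift the convergence to $\disk$, propagate it through the isotopy to each $t_n$, and then project the resulting geodesic convergence back down to $\complex$. Fix a lift $\h{Q}_\infty \in \h{\mathcal{J}}$ of $Q_\infty$ with endpoints $\h{a}_\infty, \h{b}_\infty$. By \cref{thm:midpt} the midpoints of $Q_n$ converge to that of $Q_\infty$, and since $\varphi_U$ is a local homeomorphism on $\disk$ over compact subsets of $U$, these midpoints lift to $\midpt(\h{Q}_n) \to \midpt(\h{Q}_\infty)$; applying \cref{lift} on each half of the resulting lifted arcs then forces $\overline{\h{Q}_n} \to \overline{\h{Q}_\infty}$ in Hausdorff metric, and in particular $\h{a}_n \to \h{a}_\infty$, $\h{b}_n \to \h{b}_\infty$ in $\overline{\disk}$. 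I would next upgrade the continuity $t \mapsto \h{a}^t$ established at the end of \cref{sec:lifts moving} to joint continuity in $(n,t)$, by applying condition (ii) of \cref{main1technical} to a fixed small crosscut near $a_\infty$ and invoking \cref{Hlift} to lift the associated homotopy: the resulting lifted track depends continuously on both variables simultaneously, so $\h{a}_n^{t_n} \to \h{a}_\infty^{t_\infty}$ (and similarly for $\h{b}_n^{t_n}$).

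With the four endpoints on $\partial \disk$ converging to distinct limits, the hyperbolic geodesics $\h{\g}_n^{t_n}$ (Euclidean circular arcs perpendicular to $\partial \disk$) converge to $\h{\g}_\infty^{t_\infty}$ uniformly on $\overline{\h{\g}_\infty^{t_\infty}}$ under the natural reparameterization $\h{\theta}_n: \h{\g}_\infty^{t_\infty} \to \h{\g}_n^{t_n}$. \cref{caratheodory} then gives $\varphi_U^{t_n} \to \varphi_U^{t_\infty}$ uniformly on each disk $\{|z|\le 1-\eta\}$, which handles the portion of the projected geodesic arising from such a set. \textbf{The main obstacle} is to control the behaviour of $\varphi_U^{t_n}$ on the tails of $\h{\g}_n^{t_n}$ near $\partial \disk$, where these maps are discontinuous; this is where uniform perfectness enters. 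The Gehring--Hayman estimate (\cref{gehrhay}) yields a constant $K$ depending only on the common uniform perfectness constant $k$, so the bound is uniform in $n$. Comparing a short tail of $\h{\g}_n^{t_n}$ near $\h{a}_n^{t_n}$ with a lift of a small-diameter path $\gamma_{t_n}$ furnished by condition (ii) of \cref{main1technical} applied to a small crosscut near $a_n$, we bound the diameter of the projected tail uniformly by $K\e$; letting $\e \downarrow 0$ shows these tails vanish uniformly in $n$, producing the required homeomorphisms $\theta_n: \g_\infty^{t_\infty} \to \g_n^{t_n}$ with $\theta_n \to \id$.
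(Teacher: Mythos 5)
Your outline matches the paper's in its overall shape—Part 1 via compactness of $\mathcal{J}$, then converting the problem to convergence of endpoint lifts on $\partial\disk$, propagating through time, and finally using Carath\'eodory convergence together with Gehring--Hayman to control the tails near $\partial\disk$. Parts 1, (d), and (e) of your sketch are essentially what the paper does. The genuine gap is in step (b), the assertion $\h{a}_n^{t_n} \to \h{a}_\infty^{t_\infty}$.

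You say you would "upgrade the continuity $t\mapsto\h{a}^t$ to joint continuity in $(n,t)$ by applying condition (ii) of \cref{main1technical} to a fixed small crosscut near $a_\infty$ and invoking \cref{Hlift}." This does not close the gap. Applying condition (ii) and \cref{Hlift} to a single fixed crosscut with one endpoint at $a_\infty$ produces one lifted homotopy, whose track at $s=0$ is the single curve $t\mapsto\h{a}_\infty^t$; it tells you nothing about the tracks $t\mapsto\h{a}_n^t$ for $n<\infty$, since the assignment $\h{a}\mapsto\h{a}^t$ is constructed separately for each boundary lift using its own crosscut, and there is no a priori uniformity or equicontinuity across different $\h{a}$. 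What is actually needed here is precisely the content of the paper's \cref{claim:contangle}: that the pointwise-defined boundary correspondence $\alpha^t:\partial\disk\to\partial\disk$, extending $\h{y}\mapsto\h{y}^t$ on the accessible set $\h{\mathcal{A}}$, is a homeomorphism, and that $\alpha$ is a (jointly continuous) isotopy of $\partial\disk$. That claim is established by a nontrivial topological argument (density of $\alpha^t(\h{\mathcal{A}})$ via a parity/odd-domain argument using finitely many separating circles around $X^0$), and it is exactly what makes $\h{a}_n^{t_n}=\alpha(\h{a}_n,t_n)\to\alpha(\h{a}_\infty,t_\infty)=\h{a}_\infty^{t_\infty}$ follow from $\h{a}_n\to\h{a}_\infty$. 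Your sketch neither reproves this claim nor supplies a substitute; an alternative would be to produce, for each large $n$, a \emph{small crosscut joining $a_n$ to $a_\infty$} (as the paper does for its Gehring--Hayman tail estimate), feed it through condition (ii) and \cref{smallup} to get that $\sup_t|\h{a}_n^t-\h{a}_\infty^t|$ is small, but you would then owe a justification that such small crosscuts exist, and in any event this is not what "a fixed small crosscut near $a_\infty$" plus \cref{Hlift} gives.

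A smaller point: in step (a), the claim that lifting the midpoints and "applying \cref{lift} on each half" forces $\overline{\h{Q}_n}\to\overline{\h{Q}_\infty}$ in the Hausdorff metric (in particular that the endpoints on $\partial\disk$ converge) is stated as if automatic; it isn't, and the paper instead obtains this by passing to a subsequence using the compactness of $\h{\mathcal{J}}$ restricted to lifts of diameter bounded below. This is a minor presentational issue compared to the missing \cref{claim:contangle}, but it should be flagged.
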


\begin{proof}
Let $\h{\mathcal{A}} \subset \partial \disk$ be the set of all points in $\partial \disk$ at which $\varphi_U^0$ is defined, and let $\mathcal{A} = \{\varphi_U^0(x): x \in \h{\mathcal{A}}\}$.  This set $\mathcal{A}$ is the set of all accessible points in $\partial U$ by Theorem~\ref{lift}.  The set $\mathcal{A}$ is dense in $\partial U$ and the set $\h{\mathcal{A}}$ of lifts of points in $\mathcal{A}$ under $\varphi_U^0$ is dense in $\partial \disk$ by Theorem~\ref{Fatou}.

\begin{claiminproof}
\label{claim:contangle}
For each $t \in [0,1]$, the function $\alpha^t: \partial \disk \to \partial \disk$ which extends the function that maps each $\h{y} \in \h{\mathcal{A}}$ to $\h{y}^t$, and is defined by $\alpha^t(x) = \lim_{\{\h{y} \to  x \,\mid\, \h{y} \in \h{\mathcal{A}} \}} \h{y}^t$ for each $x \in \partial \disk$, is a homeomorphism.  Moreover $\alpha: \partial \disk \times [0,1] \to \partial \disk$, defined by $\alpha(x,t) = \alpha^t(x)$, is an isotopy starting at the identity.
\end{claiminproof}

\begin{proof}[We sketch the proof of \cref{claim:contangle}]
\renewcommand{\qedsymbol}{\textsquare (\cref{claim:contangle})}
Since the restriction $\alpha^t|_{\h{\mathcal{A}}}$ is one-to-one and preserves circular order, it suffices to show that $\alpha^t(\h{\mathcal{A}})$ is dense for each $t$.
 The proof will make use of the following notion:  Let $\mathbb S$ be the unit circle,  $\gamma:\mathbb S\to \mathbb C$ a continuous function and $O$ a point in the unbounded complementary domain of $\gamma(\mathbb S)$. A complementary domain $U$ of $\gamma(\mathbb S)$ is odd if every arc
$J$ from $O $ to a point in the domain intersects $\gamma(\mathbb S)$  an odd number of times; counting with multiplicity  and assuming that every intersection is transverse and the total number of crossings is  finite; cf. \cite[Lemma 2.1]{OT82}.

Fix $\varepsilon>0$. By Theorem~\ref{Riesz}, $\alpha^0(\h{\mathcal{A}})$ is dense. By condition (ii)
of Theorem~\ref{main1technical} and Lemma~\ref{smallup} there exists $\delta>0$ so that for any crosscut $C$ of $X^0$ all lifts of
the paths $\gamma_t^C$ (whose existence follows from condition (ii)) have diameters less than $\varepsilon$. Since $X$ is uniformly perfect one can choose finitely many simple closed curves $S_i$ which bound disjoint closed disks $D_i$ so that $X^0\subset \bigcup_i D_i$,  $X^0\cap \bigcup \partial D_i$ is finite,  and for all $i$ and every component $C$ of $S_i\setminus X^0$, the diameter of $C$ is less than $\delta$. Moreover we can assume that
 for all $t$  \ $\varphi^t_U(0)$ is contained in the unbounded component of $\bigcup \gamma^C_t([0,1])$. Then all lifts $\widehat\gamma_t^C$ have diameter less than $\varepsilon$.
 Let $F^0=\bigcup_i X^0\cap S_i$ and  $F^t=h^t(F^0)$. Since for all $C$ and all $t$, $\gamma_t^C((0,1))\cap X^t=\emptyset$ and $\h{\gamma}_t$ is continuous in the Hausdorff metric, it follows that every point of $X^t\setminus F^t$ is contained in an odd bounded complementary component of $\bigcup \gamma^C_t([0,1])$.

 Every component $C$ of $S_i\setminus X$ is a crosscut
which defines a collection of paths $\gamma_t^C$ by condition (ii)
of Theorem~\ref{main1technical}. For all $t$ let $\h{\mathcal C}_t$ be the collection of all lifts of all the paths $\gamma_t^C$.

Fix $t$.
Suppose that $r$ is a radius of the unit disk $\mathbb D$  so that $R=\varphi^t_U(r)$ lands on a point in $X^t \setminus F^t$.
Then a terminal segment $B$ of $R$  must be in an odd complementary domain  of $\bigcup \gamma^C_t([0,1])$. Let $A =R \setminus B$ be the initial segment of $R$. Then the subsegment $b$ of $r$ that corresponds to $B$ is disjoint from all crosscuts in $\h{\mathcal C}_t$.  Suppose that that $b$ is not contained in the shadow of one of these crosscuts.
 Then we may assume that the intersection of  $a$ and any member of $\h{\mathcal C}_t$ is finite and even. Since we may also assume that the intersection of $a$ with all lifted crosscuts is finite,
the  intersection of $a$ with the union of all members of $\h{\mathcal C}_t$  in a finite even number.
Since $\varphi^t_U$ is a local homeomorphism,  the number of intersections of $A$ with all crosscuts $\gamma_t^C$ is  also even, a contradiction since $A$ terminates in an odd domain.

\end{proof}

Note that by construction, $\mathcal{J}$ is almost a lamination, except that multiple arcs in $\mathcal{J}$ can share the same two endpoints.  In particular, if $C(a_n b_n)$ are circular arcs in $\mathcal{J}$ joining the points $a_n$ and $b_n$ then, after taking a subseuence if necessary, $\lim C(a_n b_n)$ is a circular arc in $\mathcal{J}$ joining $a_\infty$ to $b_\infty$.  From this it follows easily that $\lam^0$ is a lamination and, if $\g_n \in \lam^0$ is the geodesic joining $a_n$ to $b_n$, then $\lim \g_n = \g_\infty$, where $\g_\infty \in \lam^0$ is the geodesic joining $a_\infty$ to $b_\infty$.  Choose lifts $\h{\g}_n^t$ and $\h{\g}_\infty^t$ under $\varphi_U^t$ for each $t \in [0,1]$ as in the proof of \cref{gtarc}, such that $\lim \h{\g}_n^0 = \h{\g}^0_\infty$.

Fix $k$.  By \cref{claim:contangle}, $\lim \h{\g}_n^{t_k} = \h{\g}_\infty^{t_k}$.  This implies immediately that $\liminf \g_n^{t_k} \supset \g_\infty^{t_k}$.  Since the points $a_n$ and $a_\infty$ can be joined by a small crosscut in $U$, it follows from assumption (ii) of \cref{main1technical} that the points $a_n^{t_k}$ and $a_\infty^{t_k}$ can be joined by a small path.  Hence, points $x_n^{t_k}$ in $\g_n^{t_k}$ close to an endpoint (say $a_n^{t_k}$) can be joined to the endpoint $a_n^{t_k}$ by a small path (first by a small arc to a point in $\g_\infty^{t_k}$ and then by a small arc in $U^{t_k}$ to the endpoint $a_\infty^{t_k}$, followed by a small path in $U^{t_k}$ to $a_n^{t_k}$).  By \cref{gehrhay}, the sub-geodesic of $\g_n^{t_k}$ from $x_n^{t_k}$ to $a_n^{t_k}$ is small and we can conclude that $\lim \g_n^{t_k} = \g_\infty^{t_k}$ for each $k$.  Since the maps $\varphi_U^t$ are uniformly convergent on compact subsets, $\liminf \g_\infty^{t_k} \supset \g_\infty^{t_\infty}$.  Since by the above argument the sub-geodesic from a point close to the endpoint of $\g_\infty^{t_k}$ to this endpoint is small, $\lim \g_\infty^{t_k} = \g_\infty^{t_\infty}$.  It is now easy to see that there exist homeomorphisms $\theta_n: \g_\infty^{t_\infty} \to \g_n^{t_n}$ such that $\theta_n \to \id$.
\end{proof}

For each $t \in [0,1]$, we conclude from \cref{gtarc} and \cref{gtconverge} (using $t_n = t$ for all $n$) that $\lam^t$ is a lamination in $U^t$.

\subsection{Proof of \cref{main1technical}}
\label{sec:proof main1}

In this section we will complete the proof of \cref{main1technical} (and hence of \cref{main1} as well).

We will employ here the path midpoint function $\midpt$ described in \cref{thm:midpt} of \cref{sec:midpoints}.

Let $U$ be any bounded complementary domain of $X$, and consider the hyperbolic laminations $\lam^t$ in $U^t$ as constructed above in \cref{sec:laminations}.

Given any element $\g \in \lam^0$, we extend the isotopy $h$ over $\g$ to $h_\g: (X \cup \g) \times [0,1] \to \complex$ by defining $h_\g^t(\midpt(\g)) = \midpt(\g^t)$ and, if $x \in \g$ is located on the subarc with endpoints $\midpt(\g)$ and $a$ (respectively, $b$), then $h_\g^t(x)$ is the unique point on the subarc of $\g^t$ with endpoints $\midpt(\g^t)$ and $a^t$ (respectively, $b^t$) such that $\rho^0(x, \midpt(\g)) = \rho^t(h_\g^t(x), \midpt(\g^t))$, using the hyperbolic metric $\rho^t$ on $U^t$.

Now extend $h$ to $h_\lam: X \cup \bigcup \lam^0 \to \complex$ by defining
\[ h_\lam(x,t) =
\begin{cases}
h(x,t) & \textrm{if $x \in X$} \\
h_\g(x,t) & \textrm{if $x \in \g \in \lam^0$.}
\end{cases}
\]

Then for each $t \in [0,1]$, $h_\lam^t$ is clearly a bijection from $X \cup \bigcup \lam^0$ to $X^t \cup \bigcup \lam^t$.

\begin{claim}
\label{claim:cont}
$h_\lam$ is continuous.
\end{claim}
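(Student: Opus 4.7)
The plan is to verify sequential continuity at an arbitrary point $(x_\infty, t_\infty) \in (X \cup \bigcup \lam^0) \times [0,1]$: given $(x_n, t_n) \to (x_\infty, t_\infty)$, I want to show $h_\lam(x_n, t_n) \to h_\lam(x_\infty, t_\infty)$. If all $x_n$ lie in $X$ then continuity of $h$ suffices, so I may assume each $x_n$ lies on some geodesic $\g_n \in \lam^0$. Passing to a subsequence, either $\diam(\g_n) \to 0$, or by the compactness statement in \cref{gtconverge} the $\g_n$ converge to some $\g_\infty \in \lam^0$ with $x_\infty \in \overline{\g_\infty}$; in the latter case \cref{gtconverge} also gives $\g_n^{t_n} \to \g_\infty^{t_\infty}$ via homeomorphisms converging to the identity (and analogously at $t = 0$), and \cref{thm:midpt} then yields $\midpt(\g_n) \to \midpt(\g_\infty)$ and $\midpt(\g_n^{t_n}) \to \midpt(\g_\infty^{t_\infty})$.

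When $\g_n \to \g_\infty \in \lam^0$, the argument proceeds in the universal cover. I would fix compatible lifts $\h{\g}_n \to \h{\g}_\infty$ in $\disk$ (possible because the corresponding pairs in $\h{\mathcal{J}}_\arcends$ converge, which is how $\lam^t$ was constructed in \cref{sec:laminations}), let $\h{x}_n \in \h{\g}_n$ denote the lift of $x_n$, and let $\h{y}_n$ be the lift of $y_n := h_{\g_n}^{t_n}(x_n)$ in $\h{\g}_n^{t_n}$ on the side of $\h{\midpt}_n^{t_n}$ corresponding to the side of $\h{\midpt}_n$ containing $\h{x}_n$. Since the hyperbolic metric on $U^t$ pulls back to the standard hyperbolic metric on $\disk$, the defining equation for $h_{\g_n}^{t_n}$ reads
\[ \rho^\disk(\h{x}_n, \h{\midpt}_n) = \rho^\disk(\h{y}_n, \h{\midpt}_n^{t_n}). \]
If $x_\infty$ is an interior point of $\g_\infty$, the left-hand side converges to $\rho^\disk(\h{x}_\infty, \h{\midpt}_\infty)$, forcing $\h{y}_n$ to remain in a compact subset of $\disk$ and to converge to the unique $\h{y}_\infty \in \h{\g}_\infty^{t_\infty}$ satisfying the limiting identity; then \cref{caratheodory} delivers $y_n = \varphi_U^{t_n}(\h{y}_n) \to \varphi_U^{t_\infty}(\h{y}_\infty) = h_{\g_\infty}^{t_\infty}(x_\infty)$. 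If instead $x_\infty$ is an endpoint of $\g_\infty$ (necessarily in $X$), then $\h{x}_n \to \h{x}_\infty \in \partial \disk$, so $\rho^\disk(\h{x}_n, \h{\midpt}_n) \to \infty$, which drives $\h{y}_n$ toward $\partial \disk$ along $\h{\g}_n^{t_n} \to \h{\g}_\infty^{t_\infty}$ on the correct side of $\h{\midpt}_n^{t_n}$; combined with the continuity of the extension $(\cdot)^t : \partial \disk \to \partial \disk$ (the notation established after the proof of \cref{Hlift}, which uses the claim in the proof of \cref{gtconverge}), this yields $\h{y}_n \to \h{x}_\infty^{t_\infty}$ and hence $y_n \to x_\infty^{t_\infty} = h(x_\infty, t_\infty)$.

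For the remaining subcase $\diam(\g_n) \to 0$, the endpoints $a_n, b_n$ of $\g_n$ lie in $X$ and both converge to $x_\infty$, so $a_n^{t_n}, b_n^{t_n} \to h(x_\infty, t_\infty)$ by continuity of $h$. The arc $\g_n$ itself is a crosscut of $U$ of small diameter, so condition (ii) of \cref{main1technical} furnishes paths in $U^{t_n}$ joining $a_n^{t_n}$ to $b_n^{t_n}$ of small diameter, and the Gehring-Hayman inequality \cref{gehrhay} (with the uniform constant afforded by the uniform perfectness assumption) then bounds $\diam(\g_n^{t_n})$ by $K$ times this small diameter; hence $y_n \in \g_n^{t_n}$ lies near $h(x_\infty, t_\infty)$.

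The main obstacle I anticipate is the boundary bookkeeping in the two degenerate subcases: making rigorous the assertion that $\h{y}_n$ accumulates only on the \emph{correct} endpoint of $\h{\g}_\infty^{t_\infty}$ and not on the opposite endpoint nor on some spurious point reached via the moving geodesics. The essential ingredients for this are \cref{caratheodory} for uniform convergence of the covering maps on compact subsets of $\disk$, \cref{gehrhay} for Euclidean-to-geodesic control in the shrinking case, the boundary extension machinery from \cref{Hlift} and the claim in the proof of \cref{gtconverge}, and the continuity properties of $\midpt$ from \cref{thm:midpt}; the combinatorial tracking of which side of the midpoint each point sits on is routine but must be handled with care.
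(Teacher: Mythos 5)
Your decomposition — passing to subsequences with either $\diam(\g_n) \to 0$ or $\g_n \to \g_\infty \in \lam^0$, then splitting the non-degenerate case according to whether $x_\infty$ is interior to $\g_\infty$ or an endpoint — and your tool-set (lamination compactness, \cref{thm:midpt}, uniform convergence of the $\varphi_U^t$ on compacta via \cref{caratheodory}, and \cref{gehrhay}) match the paper's proof exactly, differing only in that you carry out the bookkeeping explicitly in the universal cover. One caution on a step you rightly flag as the ``main obstacle'': in the endpoint subcase, $\h{y}_n \to \h{x}_\infty^{t_\infty}$ in $\overline{\disk}$ does \emph{not} by itself yield $y_n = \varphi_U^{t_n}(\h{y}_n) \to x_\infty^{t_\infty}$, since the covering maps are not continuous at $\partial\disk$; the correct passage is that $y_n$ and the moving endpoint $a_n^{t_n}$ are Euclidean-close because the sub-geodesic of $\g_n^{t_n}$ between them has small diameter by \cref{gehrhay}, and $a_n^{t_n} \to h(x_\infty, t_\infty)$ by continuity of $h$ (the same mechanism is also what justifies $\diam(\g_n^{t_n}) \to 0$ in your shrinking case, which the paper leaves implicit).
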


\begin{proof}[Proof of \cref{claim:cont}]
\renewcommand{\qedsymbol}{\textsquare (\cref{claim:cont})}
Suppose that $(x_i,t_i) \to (x_\infty,t_\infty)$ and $x_i \in \g_i \in \lam^0$.  If there exists $\e > 0$ so that $\diam(\g_i) > \e$ for all $i$, then we may assume, by taking a subsequence if necessary, that $\lim \g_i = \g_\infty \in \lam^0$.  If $x_\infty$ is not an endpoint of $\g_\infty$ then, by uniform convergence of $\varphi_U^t$ on compact sets, $\lim h_\lam(x_i,t_i) = h_\lam(x_\infty,t_\infty)$.  If $x_\infty$ is an endpoint of $\g_\infty$ (so $x_\infty \in X$), then $\rho^0(x_i,\midpt(\g_i)) \to \infty$ and again $\lim h_\lam(x_i,t_i) = h_\lam(x_\infty,t_\infty) = h(x_\infty,t_\infty)$.  Hence we may assume that $\lim \diam(\g_i) = 0$.  Then $x_\infty \in X$ and $\lim \diam(h_\lam^{t_i}(\g_i)) = 0$.  Hence, if $a_i$ is an endpoint of $\g_i$, then $\lim h_\lam(x_i,t_i) = \lim h(a_i,t_i) = h(x_\infty,t_\infty)$ as desired.
\end{proof}

Finally, we repeat the above procedure on each bounded complementary domain $U$ of $X$ to extend $h$ over the hyperbolic lamination obtained from the Kulkarni-Pinkall construction as in \cref{sec:laminations} on each such $U$.  The result is a function $H: Y \times [0,1] \to \complex$ which is defined on the union $Y$ of $X$ with all the hyperbolic laminations of all bounded complementary domains of $X$.  Note that for any $\e > 0$, there are only finitely many bounded complementary domains of $X$ which contain a disk of diameter at least $\e$, and hence there are only finitely many such domains whose corresponding hyperbolic lamination contains an arc of diameter at least $\e$.  This implies, as above, that $H$ is continuous.

Note that each bounded complementary domain of $Y$ is a gap of the hyperbolic lamination of one of the bounded complementary domains of $X$.  Since all such gaps are simply connected, $Y$ is a continuum.  Hence by \cite{ot10} the isotopy $H$ of $Y$ can be extended over the entire plane.

This completes the proof of \cref{main1technical}.  By the comments at the end of \cref{sec:lifts moving}, this also completes the proof of \cref{main1}.

\bigskip

In \cref{main1} we assumed that $X^t$ is uniformly perfect for each $t \in [0,1]$.  This assumption allows for the use of the powerful analytic results described in \cref{sec:analytic covering maps}.  It is natural to wonder if this assumption is really needed.  We conjecture that this is not the case.

\begin{conj}
Suppose that $X$ is a plane compactum and $h: X \times [0,1] \to \complex$ is an isotopy starting at the identity.  Then the following are equivalent:
\begin{enumerate}
\item $h$ extends to an isotopy of the entire plane,
\item for each $\e > 0$ there exists $\delta > 0$ such that for every complementary domain $U$ of $X$ and each crosscut $Q$ of $U$ with $\diam(Q) < \delta$, $h$ can be extended to an isotopy $h_Q: (X \cup Q) \times [0,1] \to \complex$ such that for all $t \in [0,1]$, $\diam(h^t(Q)) < \e$.
\end{enumerate}
\end{conj}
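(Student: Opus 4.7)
The plan is to adapt the proof of \cref{main1technical}, replacing the analytic tools that rely on uniform perfectness with direct constructions afforded by the stronger (isotopy-level) condition (ii). The direction (i) $\Rightarrow$ (ii) is immediate: if $H: \complex \times [0,1] \to \complex$ is an isotopy extending $h$, then for any crosscut $Q$ the restriction $H|_{(X \cup Q) \times [0,1]}$ is an isotopy extending $h$, and uniform continuity of $H$ on $\overline{Q} \times [0,1]$ yields the required diameter control once $\diam(Q)$ is sufficiently small.

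For the nontrivial direction (ii) $\Rightarrow$ (i), the framework of the proof of \cref{main1technical} largely carries over. We may assume $X$ is encircled, and we track the bounded complementary domains $U^t$ of $X^t$ together with the covering maps $\varphi_U^t: \disk \to U^t$ using \cref{caratheodory}; this portion of the argument needs no uniform perfectness. The essential uses of uniform perfectness are confined to \cref{smallup}, \cref{Hlift}, and the lamination analysis in \cref{gtarc} and \cref{gtconverge}, all of which go through \cref{gehrhay}. The proposal is to bypass \cref{gehrhay} by extracting the motion of crosscuts directly from the isotopies $h_Q$ supplied by condition (ii), rather than through path-lifting diameter estimates.

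Concretely, for each bounded complementary domain $U$, let $\mathcal{J}_U$ be the family of Kulkarni--Pinkall chords of $U$ from \cref{sec:partitioning domains}; each is a circular arc whose Euclidean diameter is controlled by the radius of the corresponding disk in the Kulkarni--Pinkall construction. Fix $\e > 0$ and let $\delta$ be as in condition (ii). For each chord $Q \in \mathcal{J}_U$ with $\diam(Q) < \delta$, condition (ii) supplies an isotopy $h_Q$ extending $h$ with $\diam(h_Q^t(Q)) < \e$ uniformly in $t$. In place of the geodesic $\g^t = \varphi_U^t(\h{\g}^t)$ defined in \cref{sec:laminations}, we use the arc $h_Q^t(Q)$ itself; the point $\h{a}^t \in \partial \disk$ is then defined by lifting $h_Q^t(Q)$ through $\varphi_U^t$ starting from a prescribed initial lift and taking the terminal boundary value, with continuity in $t$ coming from the continuity of $h_Q$ rather than from \cref{Hlift}. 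Analogues of \cref{gtarc} and \cref{gtconverge} follow from covering space theory applied to the continuously moving endpoints, using that distinct lifts of distinct chords remain pairwise disjoint under continuous endpoint motion. The midpoint extension of \cref{sec:proof main1} then produces an isotopy on the continuum $Y = X \cup \bigcup_U \bigcup_{Q} Q$ (where the inner union is over a suitably chosen dense countable subfamily of $\mathcal{J}_U$ of chords of diameter $< \delta$), and the main result of \cite{ot10} extends the motion of $Y$ to an isotopy of the entire plane.

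The principal obstacle is that the isotopies $h_Q$ are not canonical: the arcs $h_Q^t(Q)$ for distinct chords $Q \in \mathcal{J}_U$ might well cross one another at intermediate times $t$, destroying the lamination structure and preventing the above construction from producing a well-defined isotopy on $Y$. Resolving this requires a simultaneous, compatible selection of the extensions $\{h_Q\}_{Q \in \mathcal{J}_U}$. The natural strategy is inductive: order the chosen chords by decreasing diameter and add them one at a time, at each stage invoking the fact that condition (ii) persists for the enlarged continuum $X \cup Q_1 \cup \cdots \cup Q_n$ so that the next chord can be moved by an isotopy of the whole union. Establishing this persistence --- that extending $h$ over finitely many small crosscuts preserves the small-crosscut-extension property in condition (ii) --- is the main technical hurdle that would need to be worked out, likely by combining the planar isotopy extension theorem of \cite{ot10} with a cut-and-paste argument that confines each successive motion to a small neighborhood of the newly added chord.
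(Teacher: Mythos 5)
Be careful here: the statement you were asked to prove is explicitly labelled a \emph{conjecture} in the paper, and the authors state plainly that they do not know how to prove it without the uniform perfectness hypothesis. There is no ``paper's own proof'' of this statement; the whole point of formulating it as a conjecture is that the argument for \cref{main1} breaks down when the analytic machinery of \cref{sec:analytic covering maps} is withdrawn. Your proposal is therefore necessarily incomplete, and to your credit you acknowledge one gap explicitly. But the gap you flag (making the chosen extensions $h_Q$ for different chords $Q$ mutually compatible, so the moving arcs do not cross) is not the only one, and it is arguably not even the most serious one.

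The deeper obstruction, which your proposal glosses over, is in the sentence claiming that ``the point $\h{a}^t \in \partial \disk$ is then defined by lifting $h_Q^t(Q)$ through $\varphi_U^t$ \ldots with continuity in $t$ coming from the continuity of $h_Q$ rather than from \cref{Hlift}.'' Continuity of $h_Q$ only controls the arcs $h_Q^t(Q)$ in $U^t$; it gives no control on their \emph{lifts} to $\disk$. In the uniformly perfect setting, that control comes from \cref{smallup} (small arcs lift to small arcs, uniformly in $t$), which in turn is a direct consequence of the Gehring--Hayman estimate \cref{gehrhay}. Without uniform perfectness there is no such a priori bound: a crosscut of tiny Euclidean diameter near $\partial U^t$ may lift to an arc of definite size in $\disk$, and the bound need not be uniform in $t$. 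Once this fails, the lift to the boundary of $\disk$ need not depend continuously on $t$, the lamination $\lam^t$ need not be closed (the compactness clause in \cref{d:lamination} is exactly where this bites), and the continuity argument in \cref{claim:cont} collapses, because the step ``$\lim \diam(\g_i) = 0 \Rightarrow \lim \diam(h_\lam^{t_i}(\g_i)) = 0$'' has no justification. So even assuming the compatibility of the family $\{h_Q\}$ you ask for, the construction of the extension on the lamination would still be unproven. In short, both gaps --- the one you name and this one --- are precisely the reasons the authors left the statement as a conjecture, and your outline does not yet supply a substitute for \cref{gehrhay}.
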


\section{Compact sets with large components}
\label{sec:large components}

The remaining part of this paper is devoted to a proof of the following theorem.

\begin{thm}
\label{main2}
Suppose $X \subset \complex$ is a compact set for which there exists $\eta > 0$ such that every component of $X$ has diameter bigger than $\eta$.  Let $h: X \times [0,1] \to \complex$ be an isotopy which starts at the identity.  Then $h$ extends to an isotopy of the entire plane which starts at the identity.
\end{thm}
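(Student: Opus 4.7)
The plan is to apply the characterization \ref{main1technical}: the diameter hypothesis is used first to verify uniform perfectness of $X^t$ with a common constant in $t$, and then to verify the crosscut condition (ii) of \ref{main1technical}. After encircling $X$ by a large fixed circle (itself a component of diameter $> \eta$), I may assume $X$ is encircled.

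\textbf{Uniform perfectness.} I first show there is $\eta^* > 0$ such that every component of every $X^t$ has diameter at least $\eta^*$. Assuming the contrary, extract a sequence $C_n \subset X^{t_n}$ of components with $\diam C_n \to 0$. Writing $C_n = h^{t_n}(C'_n)$ with $C'_n$ a component of $X$ of diameter $\geq \eta$, pass to Hausdorff-convergent subsequences $C'_n \to C'_\infty$ (a subcontinuum of $X$ of diameter $\geq \eta$) and $t_n \to t_\infty$. Uniform continuity of $h$ on $X \times [0,1]$ yields $h^{t_n}(C'_n) \to h^{t_\infty}(C'_\infty)$ in Hausdorff metric, a continuum of positive diameter since $h^{t_\infty}|_X$ is an embedding --- contradicting $\diam C_n \to 0$. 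Combined with $M^* = \sup_t \diam X^t < \infty$, the intermediate-value theorem applied to $z \mapsto |z-x|$ on the component of $X^t$ through $x$ gives that $X^t$ is uniformly perfect with the common constant $k^* := \eta^*/(4 M^*)$.

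\textbf{Verifying condition (ii).} Fix a bounded complementary domain $U$ of $X$ and $\e > 0$. Let $K$ be the Gehring-Hayman constant of \ref{gehrhay} for $k^*$, and choose $\delta > 0$ much smaller than both $\eta$ and $\e/K$. Given a crosscut $Q$ in $U$ with endpoints $a, b \in \partial U$ and $\diam Q < \delta$, let $C_a, C_b$ be the components of $X$ containing $a, b$ (each of diameter $> \eta$). The plan is to produce an isotopy $H: \complex \times [0,1] \to \complex$ starting at the identity and agreeing with $h$ on $C_a \cup C_b$, and to set $\gamma_t := H^t(Q)$. When $C_a = C_b$, the main theorem of \cite{ot10} applied to the continuum $C_a$ gives such an $H$ directly. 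When $C_a \neq C_b$, I apply \cite{ot10} separately to $C_a$ and $C_b$ to obtain extensions $H_a, H_b$, and stitch them via a cutoff function supported in a region of $\complex \sm X$ lying between the two continua. Properties (1), (2) of \ref{main1technical}(ii) are immediate; (3) follows from uniform continuity of $H$ together with $\diam Q < \delta \ll \e/K$; (4) follows by lifting the continuous homotopy $(s,t) \mapsto H^t(Q(s))$ via \ref{Hlift}, which produces lifts varying continuously in Hausdorff metric.

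\textbf{Main obstacle.} The technical crux is the stitching in the case $C_a \neq C_b$: one must ensure the resulting $H^t$ is a homeomorphism for each $t$, that $\gamma_t = H^t(Q)$ remains strictly inside $U^t$ (not crossing any other component of $X^t$), and that the lifts $\h{\gamma}_t$ under $\varphi_U^t$ vary continuously in Hausdorff metric. The uniform diameter bound $\eta^*$ keeps other components of $X^t$ from being squeezed near $\gamma_t$ at small scale and prevents tangling; together with the Carath\'eodory kernel convergence of $\varphi_U^t$ (\ref{caratheodory}) and the Gehring-Hayman inequality, this should yield the required continuity, but a careful compactness argument on the geometry of the $U^t$ is needed to carry the stitching construction through uniformly in $t$ and across different crosscuts.
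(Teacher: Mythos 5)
There is a genuine gap, and it sits exactly where you flag the ``main obstacle'': the stitching step does not work, and the paper's proof takes a fundamentally different route to produce the paths $\gamma_t$.

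Concretely, the stitching idea fails for several reasons. First, the most natural reading of ``stitch via a cutoff function'' is a pointwise blend $H^t(z) = \chi(z) H_a^t(z) + (1-\chi(z)) H_b^t(z)$, but a convex combination of two homeomorphisms of the plane is in general not injective, let alone a homeomorphism; the two extensions $H_a$, $H_b$ produced by applying \cite{ot10} separately to $C_a$ and $C_b$ have no compatibility with each other, so there is no reason the blended map is even locally injective in the transition region. Second, and more seriously, even if some stitched isotopy $H$ agreeing with $h$ on $C_a \cup C_b$ existed, the path $\gamma_t = H^t(Q)$ has to lie in $U^t$, i.e.\ avoid \emph{all} of $X^t$, not just $C_a^t \cup C_b^t$. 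Since $H$ has no relation to $h$ on the other components of $X$, nothing prevents $H^t(Q)$ from crossing some $D^t$ for a third component $D$ of $X$; this already breaks the argument in your ``simple'' case $C_a = C_b$. Third, condition (ii) of \cref{main1technical} requires a single $\delta$ that works for \emph{every} crosscut $Q$ of $U$ of diameter $< \delta$, while your construction depends on $Q$ through $C_a, C_b$ and the stitching region, and no uniformity is established. Finally, a small point: \cref{Hlift} is stated for homotopies with one end on $\partial U^t$ and the rest inside $U^t$, whereas your $\Gamma^t(1) = b^t$ also lies on $\partial U^t$, so it cannot be applied as stated; the paper instead reduces condition (ii)(4) to Hausdorff-continuity of the paths $\gamma_t$ themselves via \cref{liftexist}, a lemma you do not use.

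The paper's actual proof constructs $\gamma_t$ directly, avoiding any plane-isotopy extension at this intermediate stage. It normalizes the crosscut $Q$ to the segment $(0,1)$, passes to the ``exponential plane'' via the covering map $\wE(z) = e^z/(e^z+1)$, and takes $\gamma_t$ to be (essentially) the $\wE$-image of the equidistant set $\bol{M}_t = \Equi(\bol{A}^t, \bol{B}^t)$, where $\bol{A}^t$ and $\bol{B}^t$ are the lifts of $X^t$ above and below the real axis. The large-component hypothesis enters twice: in \cref{liftexist}, to replace the lift-continuity requirement by continuity of the paths themselves; and in \cref{uniqueB_n}--\cref{lem:dichotomy stable}, to show the lifted components are unbounded to one side and correctly ordered, which makes $\bol{A}^t$ and $\bol{B}^t$ non-interlaced and forces $\bol{M}_t$ to be a single line projecting onto $\real$. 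This gives, uniformly in $t$ and in $Q$, a path that automatically lies in $U^t$, has controlled diameter (via \cref{ball-like} and \cref{M-disjoint}), and varies Hausdorff-continuously (via the stability of equidistant sets in \cref{paths cont}). None of these properties are obtainable from the blended-isotopy approach without essentially reproving the whole construction.

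Your uniform-perfectness paragraph is fine (the paper treats it as immediate), and your choice of applying \cref{main1technical} rather than \cref{main1} is the right move. But the construction of the $\gamma_t$, which is the real content of the proof, is missing.
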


Suppose $X \subset \complex$ is a compact set for which there exists $\eta > 0$ such that every component of $X$ has diameter bigger than $\eta$.  Let $h: X \times [0,1] \to \complex$ be an isotopy which starts at the identity.

Clearly in this case $X^t$ is uniformly perfect with the same constant $k$ for each $t \in [0,1]$, and we may assume that $X$ is encircled.  By scaling, we may also assume that for any $a \in X$ and any component $C$ of $X$, there exists $c \in C$ such that $|a^t - c^t| \geq 1$ for all $t \in [0,1]$.  \emph{We will make these assumptions for the remainder of the paper}.

We will prove \cref{main2} using the characterization from \cref{main1technical}.  To this end, we fix (again for the remainder of the paper) an arbitrary bounded complementary domain $U$ of $X$.

To satisfy condition (ii) of \cref{main1technical} we must construct, for a sufficiently small crosscut $Q$ of $U$ with endpoints $a$ and $b$, a family of paths $\gamma_t$ in $U^t$ with endpoints $a^t$ and $b^t$, which remain small during the isotopy, such that $\gamma_0$ is homotopic to $Q$ in $U$ with endpoints fixed, and which can be lifted under $\varphi_U^t$ to paths $\h{\gamma}_t$ in $\disk$ which are continuous in the Hausdorff metric.  We will show first that, in the case that $X$ has large components, it suffices to construct the family of paths $\gamma_t$ to be continuous in the Hausdorff metric.

\begin{lem}
\label{liftexist}
Let $a,b \in \partial U$.  Suppose that $\{\gamma_t: t \in [0,1]\}$ is a family such that $\gamma_t$ is a path in $U^t$ joining $a^t$ and $b^t$ with $\diam(\gamma_t([0,1])) < \frac{1}{2}$ for each $t \in [0,1]$, and the sets $\gamma_t([0,1])$ vary continuously in $t$ with respect to the Hausdorff metric.  Then there are lifts $\h{\gamma}_t$ of the paths $\gamma_t$ under $\varphi_U^t$ such that the sets $\h{\gamma}_t([0,1])$ also vary continuously in $t$ with respect to the Hausdorff metric.
\end{lem}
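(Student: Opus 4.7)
The plan is to define $\h{\gamma}_t$ as the unique lift of $\gamma_t$ under $\varphi_U^t$ starting at the continuous boundary point $\h{a}^t$ (set up in the notation at the end of \cref{sec:lifts moving}), and then to show that the images $\h{\gamma}_t([0,1])$ vary continuously in the Hausdorff metric. Existence of such a lift follows from \cref{lift} applied piecewise after selecting an interior lift (e.g.\ a lift of $\gamma_t(\tfrac{1}{2}) \in U^t$). From \cref{smallup} applied with the bound $\diam(\gamma_t([0,1])) < \tfrac{1}{2}$, we obtain a uniform $\e_0 > 0$ with $\diam(\h{\gamma}_t([0,1])) \leq \e_0$ for every $t$, so each lift image lies in $\overline{B(\h{a}^t,\e_0)}$.

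The key topological ingredient is: for any open set $D \subset \complex$ of Euclidean diameter strictly less than $1$ and any $t \in [0,1]$, the restriction of $\varphi_U^t$ to each connected component of $(\varphi_U^t)^{-1}(D \cap U^t)$ is a homeomorphism onto its image. Indeed, any loop $\alpha$ in $D \cap U^t$ has diameter less than $1$ and therefore cannot enclose any component of $X^t$ (each such component has diameter at least $1$ by the scaling set up in \cref{sec:large components}), so $\alpha$ has winding number zero around every component of $X^t$ and is null-homotopic in $U^t$. It follows that for every component $W$ of $D \cap U^t$, the inclusion $W \hookrightarrow U^t$ induces the trivial map on $\pi_1$; thus, since $\varphi_U^t$ is the universal cover, each component of $(\varphi_U^t)^{-1}(W)$ corresponds to the trivial cover of $W$ and is mapped homeomorphically onto $W$ by $\varphi_U^t$.

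To establish Hausdorff continuity at a fixed $t_0 \in [0,1]$, take a sequence $t_n \to t_0$; by compactness, pass to a subsequence along which $\h{\gamma}_{t_n}([0,1]) \to K$ in the Hausdorff metric, where $K \subset \overline{\disk}$ is a continuum containing $\h{a}^{t_0}$ of diameter at most $\e_0$. Choose an open disk $D$ of Euclidean diameter strictly less than $1$ containing $\gamma_{t_0}([0,1])$ (and, by Hausdorff continuity, $\gamma_{t_n}([0,1])$ for large $n$), and let $V_0$ be the component of $(\varphi_U^{t_0})^{-1}(D \cap U^{t_0})$ whose closure in $\overline{\disk}$ contains $\h{a}^{t_0}$; by the key ingredient, $\varphi_U^{t_0}|_{V_0}$ is a homeomorphism onto the component $W_0$ of $D \cap U^{t_0}$ that contains $\gamma_{t_0}((0,1))$. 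Using \cref{caratheodory} together with the continuity of $\h{a}^t$, one argues that $\h{\gamma}_{t_n}([0,1]) \subset \overline{V_0}$ for large $n$, and hence $K \subset \overline{V_0}$. Since $\varphi_U^{t_0}(K \cap \disk)$ and $\varphi_U^{t_0}(\h{\gamma}_{t_0}([0,1]) \cap \disk)$ both equal $\gamma_{t_0}([0,1]) \cap U^{t_0}$, and $\varphi_U^{t_0}|_{V_0}$ is injective, the sets $K \cap \disk$ and $\h{\gamma}_{t_0}([0,1]) \cap \disk$ coincide; taking closures in $\overline{\disk}$ then yields $K = \h{\gamma}_{t_0}([0,1])$. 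The main obstacle will be the ``tracking-in-$V_0$'' step: verifying by Carath\'{e}odory kernel convergence and continuity of $\h{a}^t$ that the lifts $\h{\gamma}_t([0,1])$ remain in the closure of the correct preimage component $V_0$ for $t$ near $t_0$, rather than straying into a different component of $(\varphi_U^{t_0})^{-1}(D \cap U^{t_0})$ near $\h{a}^{t_0}$.
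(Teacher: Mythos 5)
Your plan begins by defining $\h{\gamma}_t$ as the unique lift of $\gamma_t$ anchored at ``the continuous boundary point $\h{a}^t$'' introduced at the end of \cref{sec:lifts moving}. This is circular. That notation is set up explicitly \emph{under the assumption that condition (ii) of \cref{main1technical} holds}: to define $\h{a}^t$, the paper lifts a crosscut using the paths and lifts furnished by condition (ii) and relies on their Hausdorff continuity to conclude that $\h{\gamma}_t(0)$ moves continuously. But the entire point of \cref{liftexist} and \cref{sec:large components} is to \emph{verify} condition (ii) of \cref{main1technical} in the large-components case. Until the lemma is proved, you have no continuous family $t \mapsto \h{a}^t$: you have, at each $t$, a choice of some lift of $a^t$ given by \cref{lift}, with no coherence across $t$. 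So the anchor for your lift is exactly what the lemma is trying to produce.

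Even setting that aside, the remaining substance of the argument --- what you call the ``tracking-in-$V_0$'' step --- is left unresolved, and it is precisely where the work lies. You want $\h{\gamma}_{t_n}([0,1]) \subset \overline{V_0}$ for large $n$, where $V_0$ is a component of $(\varphi_U^{t_0})^{-1}(D \cap U^{t_0})$, but $\h{\gamma}_{t_n}$ is a lift under the \emph{different} covering map $\varphi_U^{t_n}$, and its endpoints sit on $\partial\disk$ where Carath\'{e}odory kernel convergence (\cref{caratheodory}) gives no control: uniform convergence holds only on compact subsets of $\disk$. Near $\partial\disk$, nothing in the convergence $\varphi_U^{t_n} \to \varphi_U^{t_0}$ by itself prevents the lifted arcs from entering a different sheet over $D$. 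Your ``key topological ingredient'' (that $\varphi_U^t$ is a homeomorphism onto its image over each component of the preimage of a small open set, because large components of $X^t$ force loops of diameter less than $1$ in $U^t$ to be null-homotopic) is correct and is essentially the same fact the paper uses; where the paper differs is in how it exploits it. Rather than first choosing lifts and then proving they track, the paper \emph{constructs} the lift for $t$ near $t_0$ directly as $(\varphi_U^t|_{\h{P}_t})^{-1}\circ\gamma_t$, where $\h{P}_t$ is the preimage component determined by the (compactly contained) interior piece $\h{\gamma}_{t_0}([s_a,s_b])$, and separately controls the two ends near $a^{t_0}$, $b^{t_0}$ using \cref{smallup} applied to small boundary crosscuts $B_a$, $B_b$. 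This explicit construction makes the ``tracking'' automatic and confines the boundary behavior to arbitrarily small regions, which is exactly the step your outline does not carry out. To repair your argument, you would need to replace the $\h{a}^t$-anchored definition with such a compact-interior anchor and then do the two-ended boundary estimate; at that point you would essentially be reproducing the paper's proof.
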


\begin{proof}
Suppose that the family $\gamma_t$ is as specified in the statement.  Recall that $d_H$ denotes the Hausdorff distance.  Fix $t_0 \in [0,1]$.  It suffices to show that, given a lift $\h{\gamma}_{t_0}$ of $\gamma_{t_0}$ and $0 < \e < \frac{1}{2}$, there exists $\delta > 0$ and lifts $\h{\gamma}_t$ of $\gamma_t$ for $|t - t_0| < \delta$ such that $d_H(\h{\gamma}_t([0,1]), \h{\gamma}_{t_0}([0,1])) < \e$.

By \cref{smallup} we can choose small disjoint open balls $B_a$ centered at $a^{t_0}$ and $B_b$ centered at $b^{t_0}$ of diameters less than $\frac{1}{4}$ such that for all $t$ and any component $C$ of $\partial B_a \cap U^t$ or $\partial B_b \cap U^t$, the diameter of each component of $(\varphi_U^t)^{-1}(C)$ is less than $\frac{\e}{4}$.

Let $s_a,s_b \in (0,1)$ be the numbers such that $\gamma_{t_0}(s_a) \in \partial B_a$, $\gamma_{t_0}([0,s_a)) \subset B_a$, $\gamma_{t_0}(s_b) \in \partial B_b$, and $\gamma_{t_0}((s_b,1]) \subset B_b$.  Denote $z_a = \gamma_{t_0}(s_a)$ and $z_b = \gamma_{t_0}(s_b)$.  Choose an open set $O \subset \complex$ such that $\gamma_{t_0}([s_a,s_b]) \subset O$, $\overline{O} \subset U^{t_0}$, and the diameter of $O \cup B_a \cup B_b$ is less than $1$.  For $t$ sufficiently close to $t_0$, we have $\overline{O} \subset U^t$ and $\gamma_t([0,1]) \subset O \cup B_a \cup B_b$.  Since each component of $X^t$ has diameter greater than $1$, we have that no bounded complementary component of $O \cup (B_a \cup B_b \sm X^t)$ contains any points of $X^t$.  It follows that there exists a simply connected open set $P_t$ in $U^t$ such that $\gamma_t((0,1)) \cup O \subset P_t$.  This means that the covering map $\varphi_U^t$ maps each component of $(\varphi_U^t)^{-1}(P_t)$ homeomorphically onto $P_t$.

Since the maps $\varphi_U^t$ converge uniformly on compact sets as $t \to t_0$, for $t$ sufficiently close to $t_0$ there exists exactly one component $\h{P}_t$ of $(\varphi_U^t)^{-1}(P_t)$ such that $\h{\gamma}_{t_0}([s_a,s_b]) \subset \h{P}_t$.  For such $t$, define the lift $\h{\gamma}_t$ of $\gamma_t$ by $\h{\gamma}_t = (\varphi_U^t |_{\h{P}_t})^{-1} \circ \gamma_t$.

To see that these lifts are Hausdorff close to $\h{\gamma}_{t_0}$, let $\delta > 0$ be small enough so that for all $t$ with $|t - t_0| < \delta$ we have:

\begin{enumerate}
\item There exists $\nu > 0$ such that $|(\varphi_U^t|_{\h{P}_t})^{-1}(x_1) - (\varphi_U^{t_0}|_{\h{P}_{t_0}})^{-1}(x_2)| < \frac{\e}{2}$ for all $x_1,x_2 \in \complex$ with $|x_1 - x_2| < \nu$ and either $x_1 \in O$ or $x_2 \in O$;
\item $d_H(\gamma_t([0,1]), \gamma_{t_0}([0,1])) < \nu$; and
\item $\gamma_t([0,1]) \cap (\partial B_a \sm O) = \0$ and $\gamma_t([0,1]) \cap (\partial B_b \sm O) = \0$.
\end{enumerate}

Given $t$ with $|t - t_0| < \delta$, let $C_{a,t}$ be the component of $\partial B_a \sm X^t$ which contains $z_a$, and let $C_{b,t}$ be the component of $\partial B_b \sm X^t$ which contains $z_b$.  Let $\h{C}_{a,t}$ and $\h{C}_{b,t}$ be lifts of $C_{a,t}$ and $C_{b,t}$ which contain $(\varphi_U^t |_{\h{P}_t})^{-1}(z_a)$ and $(\varphi_U^t |_{\h{P}_t})^{-1}(z_b)$, respectively.  By the choice of $B_a$ and $B_b$, the diameters of $\h{C}_{a,t}$ and $\h{C}_{t,b}$ are less than $\frac{\e}{4}$.  It follows from (iii) that $\h{\gamma}_t([0,1])$ is contained in $(\varphi_U^t|_{\h{P}_t})^{-1}(O)$ together with the small region  under $\h{C}_{t,a}$ and the small region
 under $\h{C}_{t,b}$.  Note that these small regions have diameters less than $\frac{\e}{2}$.  This means that for every point $\h{p}$ in $\h{\gamma}_t([0,1])$ there is a point $\h{q} \in \h{\gamma}_t([0,1]) \cap (\varphi_U^t|_{\h{P}_t})^{-1}(O)$ such that $|\h{p} - \h{q}| < \frac{\e}{2}$.  Then, since $q = \varphi_U^t(\h{q}) \in O$, by (ii) there is a point $r \in \gamma_{t_0}([0,1])$ such that $|q - r| < \nu$.  If we let $\h{r}$ be the lift $\h{r} = (\varphi_U^{t_0}|_{\h{P}_{t_0}})^{-1}(r) \in \h{\gamma}_{t_0}([0,1])$, then by (i) we have $|\h{q} - \h{r}| < \frac{\e}{2}$.  Then by the triangle inequality, $|\h{p} - \h{r}| < \e$.  Similarly, we can show that for any $\h{r} \in \h{\gamma}_{t_0}([0,1])$ there is a point $\h{p} \in \h{\gamma}_t([0,1])$ with $|\h{p} - \h{r}| < \e$.  Thus $d_H(\h{\gamma}_t([0,1]), \h{\gamma}_{t_0}([0,1])) < \e$.
\end{proof}

\begin{notation}[$\e$, $\nu$]
For the remainder of the paper, we fix an arbitrary $\e > 0$.  For later use, fix $0 < \nu < \frac{1}{3}$ small enough so that $\frac{8 \nu}{1 - \nu} < \frac{\e}{2}$.
\end{notation}

To prove \cref{main2}, it remains to show that there exists $\delta > 0$ such that if $Q$ is a crosscut of $U$ with endpoints $a$ and $b$ with diameter less than $\delta$, there is a family of paths $\gamma_t$ such that (1) $\gamma_t$ is a path in $U^t$ joining $a^t$ and $b^t$ for each $t \in [0,1]$, (2) $\gamma_0$ is homotopic to $Q$ in $U$ with endpoints fixed, (3) $\diam(\gamma_t([0,1])) < \e$ for all $t \in [0,1]$, and (4) the sets $\gamma_t([0,1])$ vary continuously in $t$ with respect to the Hausdorff metric.

In Section 4.1, we will transform the compactum $X$, so that the crosscut $Q$ becomes the straight line segment $[0,1]$ in the plane, to simplify the ensuing constructions and arguments.  We will refer to the transformed plane as the ``normalized plane'', and the image of $X$ will be denoted by $\til{X}$.  In Section 4.2, we will lift the isotopy under an exponential covering map.  The domain of the covering map will be called the ``exponential plane'', and the preimage of $\til{X}$ will be denoted by $\bol{X}$.  In Sections 4.3 and 4.4 we will replace the lift of the crosscut $[0,1]$ of $\til{X}$ by an equidistant set which varies continuously in $t$.  The projection of this equidistant set to the original plane containing $X^t$ will be shown in Section 4.5 to be the desired path $\gamma_t$.

\subsection{The normalized plane}
\label{sec:norm plane}

In the following sections, we will make use of a covering map (which we will refer to as the ``exponential map'') of the plane minus the endpoints of a crosscut $Q$.  In order to simplify the notation and work with a single exponential map below we will normalize the compactum $X$ and the crosscut $Q$ of $X$ with end points $a$ and $b$ so that for all $t$, $a^t = 0$, $b^t = 1$, and $Q$ becomes the straight line segment $(0,1) \subset \real$.

By composing with translations it is easy to see that given a crosscut $Q$ of $X$ with endpoints $a$ and $b$ we can always assume that the point $a$ is the origin $0$ and that this point remains fixed throughout the isotopy (i.e., $a^t = 0$ for all $t$).

Let $Q$ be a crosscut of $U$ with endpoints $0$ and $b$ such that $\diam(Q) < \frac{1}{4}$.  We will impose further restrictions on the diameter of $Q$ later.

Since all arcs in the plane are tame, there exists a homeomorphism $\Theta: \complex \to \complex$ such that $\Theta(Q)$ is the straight line segment joining the points $0$ and $b$, $\Theta(0) = 0$, $\Theta(b) = b$ and $\Theta|_{\complex \sm B(0,2\diam(Q))} = \id_{\complex \sm B(0,2\diam(Q))}$.  Let $L^t: \complex \to \complex$ be the linear map of the complex plane defined by $L^t(z) = \frac{1}{\Theta(b^t)} \,z$.

\begin{notation}[$\til{X}$, $\til{x}^t$]
Define $\til{X} = L^0 \circ \Theta(X)$ and define the isotopy
$\til{h}: \til{X} \times [0,1] \to \complex$ by
\[ \til{h}(\til{x},t) = L^t \circ \Theta \circ h((L^0 \circ \Theta)^{-1}(\til{x}),t) = L^t \circ \Theta(x^t) .\]

Here and below we adopt the notation that $\til{x} = L^0 \circ \Theta(x)$ for all $x \in X$ and, hence, $\til{h}^t(\til{x}) = \til{x}^t = L^t \circ \Theta(x^t)$.  As indicated above, we will use ordinary letters to denote objects in the plane containing $X$ and attach a tilde to the corresponding objects in the normalized plane (the plane containing $\til{X}$).
\end{notation}

In the next lemma we establish some simple properties of the induced isotopy $\til{h}$.

\begin{lem}
\label{sizes}
There exists $\delta > 0$ such that if the crosscut $Q$ of $X$ with endpoints $0$ and $b$ has diameter $\diam(Q) < \delta$, then the induced isotopy $\til{h}: \til{X} \times [0,1] \to \complex$ has the following properties:
\begin{enumerate}
\item $\til{h}^0 = \id_{\til{X}}$, $\til{X}$ contains the points $0$ and $1$, the isotopy $\til{h}$ fixes these points and the segment $(0,1) \subset \real$ in the complex plane is disjoint from $\til{X}$;
\item If $\til{x}^s \in (0,1)$ for some $s \in [0,1]$, then for each $t \in [0,1]$, $|\til{x}^t| < \frac{\nu}{|\Theta(b^t)|}$; and
\item For every component $\til{C}$ of $\til{X}$ there exists a point $\til{c} \in \til{C}$ such that for all $t \in [0,1]$, $|\til{c}^t| \geq \frac{1}{|\Theta(b^t)|}$.
\end{enumerate}
\end{lem}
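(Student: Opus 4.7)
The plan is to verify the three items in turn, choosing $\delta>0$ small enough at the end to accommodate all three. Item (i) is bookkeeping using the specific form of $\Theta$ together with the fact that $h$ fixes $0=a$: since $\Theta(0)=0$ and $\Theta(b)=b$, one computes $\til h^0(\til x)=L^0(\Theta(x))=\til x$, $\til 0^t=L^t(\Theta(0))=0$, and $\til 1^t=\til b^t=\Theta(b^t)/\Theta(b^t)=1$. The points $0,b\in X$ are the endpoints of the crosscut $Q$, hence $0,1\in\til X$. For disjointness of the open segment $(0,1)$ from $\til X$, note that $(L^0\circ\Theta)^{-1}((0,1))=\Theta^{-1}((0,b))=Q$, which is contained in $U$ and hence disjoint from $X$.

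Item (iii) is immediate from the scaling assumption: for a component $C$ of $X$ choose $c\in C$ with $|c^t|\geq 1$ for all $t$; then $\delta<\tfrac12$ gives $|c^t|\geq 2\diam(Q)$, so $\Theta$ acts as the identity at $c^t$ and $|\til c^t|=|c^t|/|\Theta(b^t)|\geq 1/|\Theta(b^t)|$.

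The main work is item (ii). The key geometric observation is that $\Theta$ fixes $\partial B(0,2\diam(Q))$ pointwise, so as a homeomorphism of $\complex$ it sends $\overline{B(0,2\diam(Q))}$ onto itself. This yields the crude bound $|\Theta(w)|\leq\max(|w|,2\diam(Q))$ for every $w\in\complex$, with equality whenever $|w|\geq 2\diam(Q)$. The hypothesis $\til x^s\in(0,1)$ translates to $\Theta(x^s)=\lambda\Theta(b^s)$ for some $\lambda\in(0,1)$, so $|\Theta(x^s)|<|\Theta(b^s)|\leq\max(|b^s|,2\diam(Q))$, and the same maximum also bounds $|x^s|$ (consider separately whether or not $x^s\in B(0,2\diam(Q))$).

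To finish I would invoke two uniform continuity facts for $h$ on the compact set $X\times[0,1]$: (a) forward uniform continuity, which together with $|b|<\delta$ makes $|b^s|$ as small as desired, uniformly in $s$; and (b) a reverse modulus, obtained by noting that $(x,s)\mapsto(x^s,s)$ is a continuous injection between compacta and hence a homeomorphism onto its image, so that there is a modulus $\alpha(\nu)>0$ with the property that $|y^s-0^s|<\alpha(\nu)$ forces $|y^t-0^t|<\nu$ for all $t\in[0,1]$. Choosing $\delta$ small enough that both $|b^s|<\alpha(\nu)$ and $2\diam(Q)<\alpha(\nu)$ gives $|x^s|<\alpha(\nu)$, hence $|x^t|<\nu$ for all $t$, and a final application of the crude $\Theta$-bound delivers $|\Theta(x^t)|\leq\max(|x^t|,2\diam(Q))<\nu$, equivalent to $|\til x^t|<\nu/|\Theta(b^t)|$. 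The main obstacle is (b): transferring smallness at the unspecified intermediate time $s$ to smallness at all times is not forward uniform continuity, but does follow from compactness of $X\times[0,1]$ and the injectivity of each $h^s$.
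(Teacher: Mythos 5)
Your proof is correct and takes essentially the same route as the paper: verify (i) by direct computation from the definitions of $\Theta$ and $L^t$, get (iii) from the standing scaling assumption plus the fact that $\Theta$ is the identity outside $B(0,2\diam(Q))$, and for (ii) combine the geometric bound $|\Theta(w)|\leq\max(|w|,2\diam(Q))$ with a uniform-in-time smallness estimate on $|x^t|$.

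One place where you are more careful than the paper is worth noting. The paper's proof of (ii) asserts ``Suppose $\til{x}^s\in(0,1)$, then $x^s\in Q$,'' which is not literally right: $Q$ is a fixed crosscut while $X^s$ moves, and $\til{x}^s\in(0,1)$ only gives $\Theta(x^s)\in(0,\Theta(b^s))$, a segment whose far endpoint $\Theta(b^s)$ is generally not $b$. Your replacement of that step by the explicit chain $|\Theta(x^s)|<|\Theta(b^s)|\leq\max(|b^s|,2\diam(Q))$, together with the observation that the same maximum bounds $|x^s|$, is the correct repair. Likewise, the paper attributes the implication ``$|x^s|<2\delta$ for some $s$ $\Rightarrow$ $|x^t|<\nu/2$ for all $t$'' simply to ``uniform continuity of $h$''; you are right that this is not forward uniform continuity but rather requires the uniform continuity of the \emph{inverse} of the map $(x,s)\mapsto(h^s(x),s)$, which holds because it is a continuous injection of the compactum $X\times[0,1]$. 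Your two moduli (a) and (b), applied in the order $|b|<\delta\Rightarrow|b^s|$ small for all $s$, then $|\Theta(x^s)|<|\Theta(b^s)|\Rightarrow|x^s|$ small $\Rightarrow|x^t|<\nu$ for all $t$, assemble into a complete argument once $\delta$ is taken small enough to serve all the constraints (including $2\delta<\nu$ so the final $\max$ stays below $\nu$).
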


\begin{proof}
It follows immediately that $\til{h}^0 = \id|_{\til{X}}$, the isotopy $\til{h}$ fixes the points $0$ and $1$ and that the interval $(0,1)$ is disjoint from $\til{X}$.  Hence (i) holds.

Since $h$ is uniformly continuous we can choose $0 < \delta < \frac{\nu}{4}$ so that if $x \in X$ and $|x^s| < 2\delta$ for some $s \in [0,1]$, then $|x^t| < \frac{\nu}{2}$ for all $t$.  Suppose $\til{x}^s \in (0,1)$ for some $s\in [0,1]$, then $x^s \in Q$ and hence $|x^t| < \frac{\nu}{2}$ for all $t$.  Then $|\til{x}\,^t| < \frac{\nu}{2|\Theta(b^t)|} + \frac{2\delta}{|\Theta(b^t)|} \leq \frac{\nu}{|\Theta(b^t)|}$ using that $\Theta|_{B\setminus B(0,2\delta)}=id$ and so (ii) holds.

By the standing assumption on $X$ stated after \cref{main2}, for every component $C$ of $X$ there exists a point $c \in C$ such that for all $t$, $|c^t| > 1$.  Note that $\Theta(c^t) = c^t$ for all $t$. Hence, $|\til{c}\,^t| \geq \frac{|c^t|}{\Theta(b^t)|} \geq \frac{1}{|\Theta(b^t)|}$ for all $t$ and (iii) holds.
\end{proof}

\subsection{The exponential plane}
\label{sec:exp plane}

Define the covering map
\[ \wE: \; \complex \sm \{(2n+1)\pi i: n \in \mathbb{Z}\} \;\to\; \complex \sm \{0,1\} \]
by
\[ \wE(z) = \frac{e^z}{e^z + 1} .\]

The function $\wE$ is periodic with period $2\pi i$, and satisfies
\[ \lim_{\real(z) \rightarrow \infty} \wE(z) = 1, \quad \lim_{\real(z) \rightarrow -\infty} \wE(z) = 0, \quad \wE(\real) = (0,1) ,\]
and has poles at each point $(2n+1)\pi i$, $n \in \mathbb{Z}$.

Note that $\wE$ is the composition of the maps $e^z$ and the M\"{o}bius transformation $f(w) = \frac{w}{w+1}$.  Hence the vertical line through a point $x \in \real$ is first mapped (by the covering map $e^z$) to the circle with center $0$ and radius $e^x$ and, if $x \neq 0$, then mapped by $f$ to the circle with center $\frac{e^{2x}}{e^{2x} - 1}$ and radius $\left| \frac{e^{x}}{e^{2x} - 1} \right|$.  The imaginary axis is mapped to the vertical line through the point $x = \frac{1}{2}$ with the points at the poles $(2n+1)\pi i$ mapped to infinity.

\begin{notation}[$\bol{X}$, $\bol{x}^t$, $\bol{E}_n(r)$]
Denote by boldface $\bol{X}$ the preimage of $\til{X}$ under the covering map $\wE$, and in general we will use boldface letters to represent points and subsets of the exponential plane (the plane containing $\bol{X}$).

The isotopy $\til{h}$ of $\til{X}$ lifts to an isotopy $\bol{h}$ of $\bol{X}$; that is, $\bol{h}: \bol{X} \times [0,1] \to \complex$ is the map satisfying $\bol{h}^0 = \id_{\bol{X}}$ and $\wE(\bol{h}(\bol{x}, t)) = \til{h}(\wE(\bol{x}), t)$ for every $\bol{x} \in \bol{X}$ and all $t \in [0,1]$.  As above, given a point $\bol{x} \in \bol{X}$ (a subset $\bol{A} \subseteq \bol{X}$) and $t \in [0,1]$, denote $\bol{x}^t = \bol{h}(\bol{x}, t)$ (respectively, $\bol{A}^t = \bol{h}(\bol{A}, t)$).

For each $n \in \mathbb{Z}$ and each $r > 0$, let $\bol{E}_n(r) = B((2n+1) \pi i, r)$ be the ball of radius $r$ centered at the point $(2n+1) \pi i$.
\end{notation}

\begin{lem}
\label{ball-like}
There exists $0 < K < \pi$ such that for any $0 < r \leq K$,
\begin{enumerate}
\itemsep=5pt
\item $\displaystyle \wE \left( \bigcup_{n \in \mathbb{Z}} \bol{E}_n(r) \right) \subset \complex \sm B \left( 0,\frac{1}{2r} \right)$;
\item $\displaystyle \wE \left( \complex \sm \bigcup_{n \in \mathbb{Z}} \bol{E}_n(r) \right) \subset B \left( 0,\frac{2}{r} \right)$.
\end{enumerate}
\end{lem}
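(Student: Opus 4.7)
The plan is to use the $2\pi i$-periodicity of $\wE$ to reduce both inclusions to a single fundamental strip, and then to analyze the local Laurent behavior of $\wE$ at a single pole.  For any pole $z_0 = (2n+1)\pi i$, since $e^{z_0} = -1$, substituting $w = z - z_0$ yields
\[ \wE(z) \;=\; \frac{-e^w}{1 - e^w} \;=\; \frac{1}{1 - e^{-w}}, \]
so the function $g(w) := w \cdot \wE(z_0 + w) = w/(1 - e^{-w})$ extends holomorphically across $w = 0$ with $g(0) = 1$.  By continuity of $g$ I would fix $K_1 > 0$, uniform in $n$ by periodicity, so that $|g(w)| \ge 1/2$ whenever $|w| \le K_1$.

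For (i), $\wE$ has no zeros on its domain (since $e^z$ never vanishes), so $w/g(w) = 1/\wE(z_0+w)$ extends holomorphically across $w = 0$ with value $0$ there.  Applying the maximum modulus principle to $1/\wE$ on the closed disk $|w| \le r \le K_1$ gives
\[ \max_{|w|\le r} \frac{|w|}{|g(w)|} \;=\; \max_{|w|=r}\frac{|w|}{|g(w)|} \;\le\; \frac{r}{1/2} \;=\; 2r. \]
Hence $|\wE(z)| \ge 1/(2r)$ at every point of $\bol{E}_n(r)$ where $\wE$ is defined, which proves (i).

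For (ii), I would work on the closed strip $S = \{z \in \complex : 0 \le \imagpart(z) \le 2\pi\}$, whose only pole of $\wE$ is $\pi i$, and set
\[ H(z) \;=\; \wE(z) - \frac{1}{z - \pi i}. \]
The removable singularity of $H$ at $\pi i$ allows $H$ to extend continuously to $S$.  A direct check shows $H$ is bounded on $S$: as $\realpart(z) \to +\infty$, $\wE(z) \to 1$ and $1/(z-\pi i) \to 0$; as $\realpart(z) \to -\infty$, both terms vanish; and $H$ is continuous on any compact portion of $S$.  Setting $M = \sup_{z\in S}|H(z)|$ and $K_2 = 1/(M+1)$, for any $r \le K_2$ and any $z \in S$ with $|z - \pi i| \ge r$,
\[ |\wE(z)| \;\le\; M + \frac{1}{r} \;<\; \frac{2}{r}. \]
By $2\pi i$-periodicity of $\wE$, every point of $\complex \setminus \bigcup_n \bol{E}_n(r)$ translates to such a point of $S \setminus \bol{E}_0(r)$, so (ii) follows.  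Taking $K = \min\{K_1, K_2, \pi/2\}$ yields the required constant (and ensures $K < \pi$, so that the balls $\bol{E}_n(r)$ are disjoint).

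The argument is essentially a routine application of the Laurent expansion and the maximum modulus principle.  The main subtlety I expect is verifying that $H$ is genuinely bounded on the non-compact strip $S$, which reduces to explicit computation of the limits at the two ends of the strip as $\realpart(z) \to \pm\infty$.
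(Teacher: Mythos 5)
Your proof is correct. For part (i) your approach parallels the paper's: you establish a local estimate near each pole $(2n+1)\pi i$ by factoring out the simple pole, which is essentially the paper's inequality $\frac{1}{2|z|} \leq |\wE((2n+1)\pi i + z)| \leq \frac{2}{|z|}$ made precise via the holomorphic function $g(w)=w/(1-e^{-w})$; the invocation of the maximum modulus principle is superfluous, since $|g(w)|\geq 1/2$ on the full disk $|w|\leq r\leq K_1$ already gives $|1-e^{-w}|=|w|/|g(w)|\leq 2r$ directly, but it is harmless. For part (ii), however, your route is genuinely different from the paper's. The paper argues topologically: it shows that $T=\wE\bigl(\bigcup_n \partial \bol{E}_n(r)\bigr)$ is an essential simple closed curve lying in the annulus $\frac{1}{2r}\leq|w|\leq\frac{2}{r}$, and since $\wE$ is a $2\pi i$-periodic covering map the punctured balls and the complement of the balls must map into the unbounded and bounded complementary domains of $T$ respectively, which gives (i) and (ii) simultaneously from the single boundary estimate. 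You instead subtract the principal part $1/(z-\pi i)$ to obtain $H$, verify that $H$ is bounded on a closed fundamental strip (with uniform limits as $\realpart(z)\to\pm\infty$), and conclude by the triangle inequality. Both arguments are valid; the paper's is shorter once the boundary estimate is in hand and avoids any global boundedness check, while yours is more elementary and self-contained, makes the constant $K_2=1/(M+1)$ completely explicit, and proves the two inclusions independently.
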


\begin{proof}
For any $n \in \mathbb{Z}$ and sufficiently small $|z|$, we have
\[ e^{(2n+1)\pi i + z} = -e^z \approx -1 - z \]
and hence $\wE((2n+1)\pi i + z) \approx \frac{1 + z}{z}$.  In particular, there exists $0 < K < \pi$ such that for all $|z| \leq K$
\[ \frac{1}{2|z|} \leq |\wE((2n+1)\pi i + z)| \leq \frac{2}{|z|} .\]
Let $S_n = \partial B((2n+1)\pi i, r)$.  Then, by the above inequality, $T = \wE(\bigcup_n S_n)$ is an essential simple closed curve in the annulus centered around the origin $0$ with inner radius $\frac{1}{2r}$ and outer radius $\frac{2}{r}$.  Since $\wE$ is periodic, all $S_n$ have the same image $T$, and $\wE^{-1}(T) = \bigcup_n S_n$.  It follows that $\wE(\bigcup_n B((2n+1)\pi i, r) \sm (2n+1)\pi i)$ is contained in the unbounded complementary domain of $T$ and $\wE(\complex \sm \bigcup_n B((2n+1)\pi i, r))$ is contained in the bounded complementary domain of $T$.  Hence, $\wE(\bigcup_n B((2n+1)\pi i, r)) \subset \complex \sm B(0,\frac{1}{2r})$ and $\wE(\complex \sm \bigcup_n B((2n+1)\pi i, r)) \subset B(0,\frac{2}{r})$.
\end{proof}

\subsection{Components of $\bol{X}^t$}
\label{sec:exp components}

We say a component $\bol{C}$ of $\bol{X}^t$ ($t \in [0,1]$) is \emph{unbounded to the right} (respectively \emph{left}) if $\proj_\real(\bol{C}) \subseteq \real$ is not bounded from above (respectively from below).

For convenience we denote the horizontal strip $\{x + iy \in \complex: x \in \real,\; 2n\pi < y < 2(n+1)\pi\}$ simply by $\bol{HS}_n$.  Observe that since $\til{X} \cap (0,1) = \emptyset$ and $\wE^{-1}((0,1)) = \bigcup_{n \in \mathbb{Z}} \{x + iy \in \complex: x \in \real,\; y = 2n\pi\}$, we have that $\bol{X} \subset \bigcup_{n \in \mathbb{Z}} \bol{HS}_n$.

\begin{lem}
\label{uniqueB_n}
There exists $\delta > 0$ such that if the crosscut $Q$ of $X$ with endpoints $0$ and $b$ has diameter $\diam(Q) < \delta$, then the following holds for the induced isotopy $\bol{h}$ of $\bol{X}$:

Given a component $\bol{C}$ of $\bol{X}$, let $n \in \mathbb{Z}$ be such that $\bol{C}$ is contained in the horizontal strip $\bol{HS}_n$.  Let $\til{D}$ be the component of $\til{X}$ that contains $\wE(\bol{C})$.  Then:
\begin{enumerate}
\item if $\til{D} \cap \{0,1\} = \0$, then $\bol{C}^t \cap \bol{E}_n \left( \frac{|\Theta(b^t)|}{2} \right) \neq \0$ for all $t \in [0,1]$;
    \item $\bol{C}^t \cap \bol{E}_m \left( \frac{|\Theta(b^t)|}{2\nu} \right) = \0$ for all $m \neq n$ and all $t \in [0,1]$; and
\item if $\til{D} \cap \{0,1\} \neq \0$, then $\bol{C}$ is unbounded to the left, to the right, or both.
\end{enumerate}

Furthermore, there exist for each $k \in \mathbb{Z}$ components $\bol{L}_k$ and $\bol{R}_k$ of $\bol{X} \cap \bol{HS}_k$ such that for all $t \in [0,1]$, $\bol{L}_k^t$ is unbounded to the left and $\bol{R}_k^t$ is unbounded to the right.  Moreover, these may be chosen so that either $\bol{L}_k^t \cap \bol{E}_k \left( \frac{|\Theta(b^t)|}{2} \right) \neq \0$ for all $k \in \mathbb{Z}$ or $\bol{R}_k^t \cap \bol{E}_k \left( \frac{|\Theta(b^t)|}{2} \right) \neq \0$ for all $k \in \mathbb{Z}$.
\end{lem}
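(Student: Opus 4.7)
The plan is to choose $\delta$ small enough so that $|\Theta(b^t)|$ stays uniformly small for $t \in [0,1]$, and then exploit two ingredients: the fact that the horizontal lines $\{\imagpart(z) = 2k\pi\} = \wE^{-1}((0,1))$ separate $\complex$ into strips $\bol{HS}_k$ which no component of $\bol{X}^t$ can cross, and the pole/magnitude estimates of \cref{ball-like}.

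The first key step is to show that $\bol{C}^t \subseteq \bol{HS}_n$ for every $t \in [0,1]$. By \cref{sizes}(i) we have $\til{X}^t \cap (0,1) = \emptyset$, and since $\wE^{-1}((0,1))$ is precisely the union of the horizontal lines bounding the strips, $\bol{X}^t$ is contained in the disjoint union $\bigcup_k \bol{HS}_k$. Connectedness of $\bol{C}$ and continuity of $\bol{h}$ then force $\bol{C}^t$ to remain in the strip $\bol{HS}_n$ it occupied at $t=0$. With this, (ii) follows immediately: for $m \neq n$, the pole $(2m+1)\pi i$ differs in imaginary part from every point of the interior of $\bol{HS}_n$ by at least $\pi$, so after choosing $\delta$ small enough to guarantee $|\Theta(b^t)|/(2\nu) \leq \pi$ for all $t$, we have $\bol{E}_m(|\Theta(b^t)|/(2\nu)) \cap \bol{HS}_n = \emptyset$.

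For (i), \cref{sizes}(iii) applied to the component $D$ of $X$ corresponding to $\til{D}$ yields $\til{c} \in \til{D}$ with $|\til{c}^t| \geq 1/|\Theta(b^t)|$ for all $t$; choosing any preimage $\bol{c} \in \bol{C}$ of $\til{c}$ under $\wE$ gives $|\wE(\bol{c}^t)| \geq 1/|\Theta(b^t)|$. The contrapositive of \cref{ball-like}(ii) places $\bol{c}^t$ within a small radius of a pole, and by (ii) that pole must be $(2n+1)\pi i$; combined with the sharp local expansion $\wE(\bol{w}) = 1/(\bol{w} - (2n+1)\pi i) + 1/2 + O(\bol{w} - (2n+1)\pi i)$ near the pole, this yields the asserted radius $|\Theta(b^t)|/2$. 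For (iii), using the limits $\wE(\bol{w}) \to 0$ as $\realpart \bol{w} \to -\infty$ and $\wE(\bol{w}) \to 1$ as $\realpart \bol{w} \to +\infty$: if $0 \in \til{D}$, then since $\til{D}$ has positive diameter, $\til{D} \setminus \{0\}$ contains sequences tending to $0$ whose preimages in $\bol{HS}_n$ escape to $\realpart \to -\infty$, and $\bol{C}$ must contain such preimages; symmetrically for $1 \in \til{D}$.

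For the existence of $\bol{L}_k, \bol{R}_k$, let $\til{D}_0, \til{D}_1$ be the components of $\til{X}$ containing $0$ and $1$ respectively. Applying (iii), in each strip $\bol{HS}_k$ there are components of $\bol{X} \cap \bol{HS}_k$ unbounded to the left (arising from the preimage of $\til{D}_0 \setminus \{0\}$) and to the right (from $\til{D}_1 \setminus \{1\}$); designate these $\bol{L}_k, \bol{R}_k$. Since $\til{h}^t$ fixes $0$ and $1$, these components stay unbounded in their respective directions throughout the isotopy. The ``either/or'' clause follows by running the argument of (i) on $\til{D}_0$ (or on $\til{D}_1$) in each strip uniformly in $k$. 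The main obstacle is the sharp constant in (i): the direct bound from \cref{ball-like}(ii) only yields $|\bol{c}^t - (2n+1)\pi i| \lesssim 2|\Theta(b^t)|$, and tightening this to $|\Theta(b^t)|/2$ requires careful use of the refined local asymptotics of $\wE$ at its poles; the remaining steps are bookkeeping about strip assignments and their preservation under the isotopy $\bol{h}$.
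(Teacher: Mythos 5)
Your central premise---that $\bol{C}^t \subseteq \bol{HS}_n$ for all $t \in [0,1]$---is false. \cref{sizes}(i) only asserts that the segment $(0,1)$ is disjoint from $\til{X} = \til{X}^0$; it says nothing about $\til{X}^t$ for $t > 0$. The isotopy can carry points of $\til{X}$ across $(0,1)$ (recall the crosscut $Q$ is not required to stay disjoint from $X^t$ as $t$ varies---this is the whole difficulty), so $\til{X}^t \cap (0,1)$ need not be empty, and hence $\bol{X}^t$ can cross the horizontal lines $\wE^{-1}((0,1))$. Connectedness of $\bol{C}$ plus continuity of $\bol{h}$ therefore does not trap $\bol{C}^t$ in the strip, and your derivation of (ii) collapses. (If strip preservation did hold, this lemma would be nearly trivial.) The gap also undermines your proof of (i), which explicitly invokes (ii) to identify which pole $\bol{c}^t$ is near.

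What the paper does instead is subtler and survives strip crossings. Suppose $\bol{x}^s \in \bol{C}^s$ entered $\bol{E}_m\!\left(\frac{|\Theta(b^s)|}{2\nu}\right)$ for some $m \neq n$. Since $\bol{C}^0 \subset \bol{HS}_n$, continuity forces $\bol{x}^u$ to lie on a line $\real \times \{2k\pi i\}$ for some $0 < u < s$, so $\til{x}^u \in (0,1)$. Now \cref{sizes}(ii) propagates this to \emph{every} time: $|\til{x}^t| < \nu/|\Theta(b^t)|$ for all $t \in [0,1]$, in particular at $t = s$. But \cref{ball-like}(i) places $\wE\!\left(\bol{E}_m\!\left(\frac{|\Theta(b^s)|}{2\nu}\right)\right)$ outside $B\!\left(0, \frac{\nu}{|\Theta(b^s)|}\right)$---contradiction. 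The crucial point is that once a lifted point has touched $(0,1)$ even once, the small-magnitude constraint from \cref{sizes}(ii) binds it for all time, so it does not matter how many strip boundaries the point crosses afterwards. For (i), the paper uses the same mechanism in contrapositive form: since $|\til{c}^t| \geq 1/|\Theta(b^t)| > \nu/|\Theta(b^t)|$ for all $t$, the tracked point $\til{c}^t$ never enters $(0,1)$, so $\bol{c}^t$ never leaves $\bol{HS}_n$ and must sit in $\bol{E}_n(\cdot)$ rather than some other $\bol{E}_m(\cdot)$.

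Your observation about the constant in (i) is fair---a direct application of \cref{ball-like}(ii) gives radius roughly $2|\Theta(b^t)|$, and even the sharp pole asymptotic yields approximately $|\Theta(b^t)|$, not $|\Theta(b^t)|/2$. That appears to be a constant-tracking slip that can be absorbed by rescaling in subsequent lemmas, and is not the substantive issue; the strip-preservation claim is.
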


\begin{figure}
\begin{center}

\begin{subfigure}{}
\includegraphics{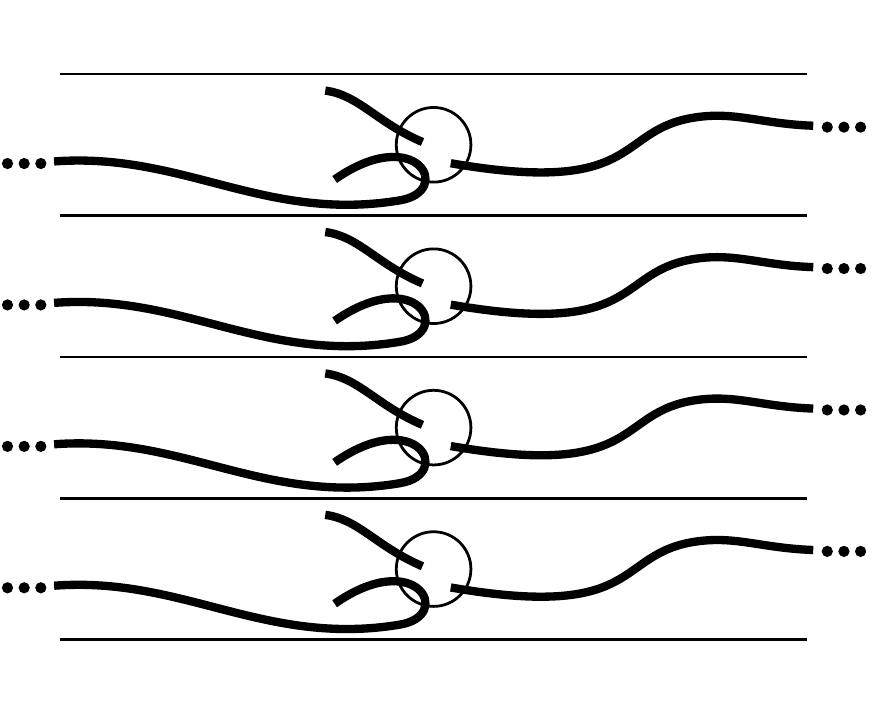}
\end{subfigure}

\vspace{0.15in}

\begin{subfigure}{}
\includegraphics{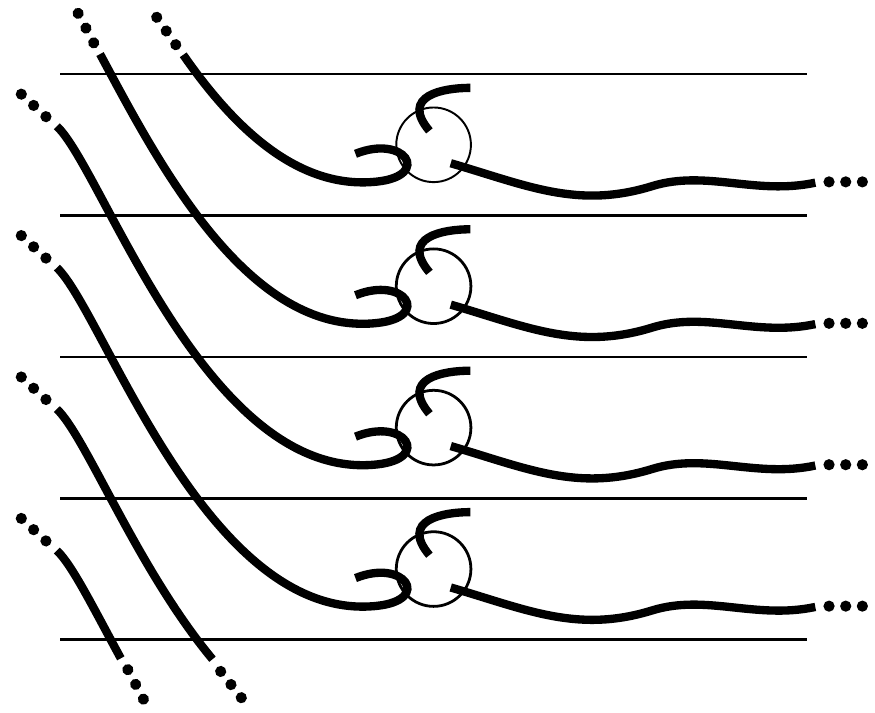}
\end{subfigure}

\end{center}

\caption{An illustration of an example of the set $\bol{X}^t$ at $t = 0$ (above) and at a later moment $t > 0$ (below).  The horizontal lines are the preimages of $(0,1)$ under $\wE$, and the balls depicted are the sets $\bol{E}_n \left( \frac{|\Theta(b^t)|}{2} \right)$.}
\label{fig:exp plane}
\end{figure}

\begin{proof}
Adopt the notation introduced in the Lemma and assume $\bol{C}$ is contained in the horizontal strip $\bol{HS}_n$.  Let $0 < K < \pi$ be as in \cref{ball-like}.  Choose $\delta > 0$ so small that $\frac{|\Theta(b^t)|}{\nu} < K$ for all $t$.

Suppose that $\til{D} \cap \{0,1\} = \0$.  Then $\wE(\bol{C}) = \til{D}$.  By \cref{sizes}(iii), we can choose $\til{c} \in \til{D}$ such that $|\til{c}^t| \geq \frac{1}{|\Theta(b^t)|}$ for all $t$.  By \cref{ball-like}(ii), $\wE \left( \complex \sm \bol{E}_n \left( \frac{|\Theta(b^t)|}{2} \right) \right) \subset B(0,\frac{1}{|\Theta(b^t)|})$.  Hence we can choose $\bol{c}^0 \in \bol{E}_n \left( \frac{|\Theta(b^0)|}{2} \right) \cap \bol{C}$ such that $\wE(\bol{c}^0) = \til{c}^0$, and then $\bol{c}^t \in \bol{C}^t \cap \bol{E}_n \left( \frac{|\Theta(b^t)|}{2} \right)$ for all $t$.  This completes the proof of (i).

Note that for all $n \in \mathbb{Z}$,  $\wE(\real \times \{2n\pi i\}) = (0,1) \subset \real$ and, hence, $\bol{X} \cap (\real \times \{2n\pi i\}) = \0$ for all $n \in \mathbb{Z}$.  To see that $\bol{C}^t \cap \bol{E}_m \left( \frac{|\Theta(b^t)|}{2\nu} \right) = \0$ for $m \neq n$ and all $t$, note first that this is the case at $t = 0$ since $\bol{C}^0 = \bol{C} \subset \bol{HS}_n$.  In order for a point $\bol{x}^s \in \bol{C}^s$ to enter a ball $\bol{E}_m \left( \frac{|\Theta(b^s)|}{2\nu} \right)$ with $n \neq m$ for some $s > 0$, it would first have to cross one of the horizontal boundary lines of $\bol{HS}_n$, say $\bol{x}^u \in \real \times \{2n\pi i\}$ for some $0 < u < s$.  Then $\wE(\bol{x}^u) = \til{x}^u \in (0,1) \subset \real$.  Hence by \cref{sizes}(ii), $|\til{x}\,^t| < \frac{\nu}{|\Theta(b^t)|}$ for all $t$.  Since by \cref{ball-like}(i), $\wE \left( \bol{E}_m \left( \frac{|\Theta(b^t)|}{2\nu} \right) \right) \subset \complex \sm B \left( 0, \frac{\nu}{|\Theta(b^t)|} \right)$ for all $t$, $\bol{x}^s \notin \bol{E}_m \left( \frac{|\Theta(b^s)|}{2\nu} \right)$, a contradiction.  This completes the proof of (ii).

Suppose next that $\til{D} \cap \{0,1\} \neq \0$.  Then $\wE(\bol{C}) = \til{C}$ is a component of $\til{D} \sm \{0,1\}$ such that $\overline{\til{C}} \cap \{0,1\} \neq \0$.  Hence $\bol{C}$ is unbounded to the left or to the right (or both).  This completes the proof of (iii).

There must exist components $\til{L}$ and $\til{R}$ of $\til{X} \sm \{0,1\}$ such that $0$ is in the closure of $\til{L}$ and $1$ is in the closure of $\til{R}$.  For each $k \in \mathbb{Z}$, let $\bol{L}_k$ be the lift of $\til{L}$ under $\wE$ which is contained in the strip $\bol{HS}_k$, and similarly define $\bol{R}_k$.  Then since the closure of $\til{L}\,^t$ contains $0$ and the closure of $\til{R}\,^t$ contains $1$ for all $t \in [0,1]$, we have that for each $k \in \mathbb{Z}$, the lift $\bol{L}_k^t$ is unbounded to the left and the lift $\bol{R}_k^t$ is unbounded to the right for all $t \in [0,1]$.

Finally, by \cref{sizes}(iii), there exists a component $\til{S}$ of $\til{X} \sm \{0,1\}$ whose closure contains $0$ or $1$, which contains a point $\til{c} \in \til{S}$ such that $|\til{c}\,^t| \geq \frac{1}{|\Theta(b^t)|}$.  Then, as in the proof of (ii), the component $\bol{S}_k^t$ of $\wE^{-1}(\til{S}^t)$ which contains the lift $\bol{c}^t_k \in \bol{HS}_k$ of $\til{c}$ under $\wE$ is unbounded to the left or to the right for all $k$ and $t$ and intersects $\bol{E}_k \left( \frac{|\Theta(b^t)|}{2} \right)$ as required.
\end{proof}

\begin{notation}[$\bol{A},\bol{B}$, $\mathfrak{A},\mathfrak{B}$]
Let $\bol{A}$ denote the set of all points of $\bol{X}$ above $\real$ and $\bol{B}$ the set of all points of $\bol{X}$ below $\real$.  Recall that $\bol{X} \cap \real = \0$, so $\bol{X} = \bol{A} \cup \bol{B}$.  For each $t \in [0,1]$, let
\[ \mathfrak{A}^t = \bol{A}^t \cup \bigcup_{n \geq 0} \overline{\bol{E}_n \left( \frac{|\Theta(b^t)|}{2} \right) } \quad\textrm{and}\quad \mathfrak{B}^t = \bol{B}^t \cup \bigcup_{n < 0} \overline{\bol{E}_n \left( \frac{|\Theta(b^t)|}{2} \right) } .\]
Then $\mathfrak{A}^t$ and $\mathfrak{B}^t$ are disjoint closed sets, and by \cref{uniqueB_n}, each component of $\mathfrak{A}^t$ and of $\mathfrak{B}^t$ is either unbounded to the left or to the right.
\end{notation}

\begin{lem}
\label{lem:vertical strip}
For each $r > 0$, there exists a lower bound $\ell \in \real$ (respectively upper bound $u \in \real$) such that for all $t \in [0,1]$, if $c + di \in \bol{A}^t$ (respectively $\bol{B}^t$) and $|c| \leq r$, then $d \geq \ell$ (respectively $d \leq u$).
\end{lem}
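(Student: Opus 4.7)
The plan is to argue by contradiction. Suppose the lower bound fails for $\bol{A}^t$: then there exist sequences $t_k \in [0,1]$ and $\bol{z}_k = c_k + d_k i \in \bol{A}^{t_k}$ with $|c_k| \leq r$ and $d_k \to -\infty$. The upper bound for $\bol{B}^t$ will follow by a symmetric argument, so I focus on $\bol{A}^t$.

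The first step is to reduce to the fundamental horizontal strip $\bol{HS}_0$ using the equivariance of $\bol{h}$ under the deck transformation $z \mapsto z + 2\pi i$. Write $\bol{z}_k = \bol{h}^{t_k}(\bol{w}_k)$ with $\bol{w}_k \in \bol{A}$; since $\bol{X} \subset \bigcup_n \bol{HS}_n$ and $\imagpart \bol{w}_k > 0$, we have $\bol{w}_k \in \bol{HS}_{m_k}$ for some $m_k \geq 0$. Setting $\bol{w}'_k = \bol{w}_k - 2m_k \pi i \in \bol{X} \cap \bol{HS}_0$ (using periodicity of $\bol{X}$), and noting that $\bol{h}$, being the lift of $\til{h}$ with $\bol{h}^0 = \id$, commutes with translation by $2\pi i$ (by uniqueness of path lifts), we obtain $\bol{z}_k = \bol{h}^{t_k}(\bol{w}'_k) + 2m_k\pi i$. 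Consequently $|\realpart \bol{h}^{t_k}(\bol{w}'_k)| = |c_k| \leq r$, while $\imagpart \bol{h}^{t_k}(\bol{w}'_k) = d_k - 2m_k\pi \leq d_k \to -\infty$, so it suffices to derive a contradiction from the behavior of the sequence $\bol{h}^{t_k}(\bol{w}'_k)$.

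Next, pass to a subsequence so that $\til{w}_k := \wE(\bol{w}'_k) \to \til{w}_\infty \in \til{X}$ (using compactness of $\til{X}$) and $t_k \to t_\infty \in [0,1]$. Setting $\til{z}_k := \til{h}^{t_k}(\til{w}_k) = \wE(\bol{h}^{t_k}(\bol{w}'_k))$, joint continuity of $\til{h}$ gives $\til{z}_k \to \til{z}_\infty := \til{h}^{t_\infty}(\til{w}_\infty)$. A direct estimate for $\wE(z) = e^z/(e^z+1)$ on the strip $|\realpart z| \leq r$ shows that $|\wE(z)|$ is bounded below by a positive constant and $|\wE(z) - 1|$ is bounded below by a positive constant; hence $\til{z}_\infty \notin \{0,1\}$. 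Since $\til{h}^{t_\infty}$ fixes $0$ and $1$ by \cref{sizes}(i), also $\til{w}_\infty \notin \{0,1\}$, and combining this with $\til{X} \cap (0,1) = \emptyset$ gives $\til{w}_\infty \in \complex \setminus [0,1]$.

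Finally, the restriction $\wE|_{\bol{HS}_0 \setminus \{\pi i\}} : \bol{HS}_0 \setminus \{\pi i\} \to \complex \setminus [0,1]$ is a conformal homeomorphism, so its inverse is continuous. Therefore $\bol{w}'_k = (\wE|_{\bol{HS}_0})^{-1}(\til{w}_k) \to \bol{u}_0 := (\wE|_{\bol{HS}_0})^{-1}(\til{w}_\infty) \in \bol{HS}_0$, and joint continuity of $\bol{h}$ yields $\bol{h}^{t_k}(\bol{w}'_k) \to \bol{h}^{t_\infty}(\bol{u}_0) \in \complex$. In particular $\imagpart \bol{h}^{t_k}(\bol{w}'_k)$ is bounded, contradicting $\imagpart \bol{h}^{t_k}(\bol{w}'_k) \to -\infty$. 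The main technical point is the $2\pi i$-equivariance reduction to $\bol{HS}_0$; once that is in place, the remainder is a compactness-and-continuity argument using the covering structure of $\wE$.
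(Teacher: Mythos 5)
Your proof is correct and takes a genuinely different route from the paper. The paper's argument is a two-step dynamical/geometric one: first it uses uniform continuity of $\til{h}$ (and of its time-slice inverses) together with the fact that $\til{h}^t$ fixes $0$ and $1$ to show that any point $\bol{x} \in \bol{X}$ whose trajectory $\bol{x}^t$ ever enters the strip $[-r,r] \times \imag$ has its entire trajectory confined to a wider strip $[-r',r'] \times \imag$; it then bounds the vertical displacement of that confined trajectory by a winding argument, observing that each vertical travel of $2\pi$ within the strip forces $\til{x}^t = \wE(\bol{x}^t)$ to travel around a circle of definite radius about $0$ or $1$, which by compactness and uniform continuity can happen only a bounded number of times. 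Your proof instead exploits the $2\pi i$-equivariance of the lifted isotopy $\bol{h}$ to translate the offending points back into the fundamental strip $\bol{HS}_0$, then applies compactness of $\til{X}$ together with the key structural fact that $\wE$ restricts to a conformal homeomorphism $\bol{HS}_0 \setminus \{\pi i\} \to \complex \setminus [0,1]$ to extract a convergent subsequence $\bol{w}'_k \to \bol{u}_0 \in \bol{X} \cap \bol{HS}_0$; joint continuity of $\bol{h}$ then gives a finite limit for $\bol{h}^{t_k}(\bol{w}'_k)$, contradicting the assumed escape to $-\infty$. The one place where you rule out the degenerate case, $\til{w}_\infty \in \{0,1\}$, plays a role analogous to the paper's strip-confinement step, and you handle it cleanly via the elementary estimate that $|\wE|$ and $|\wE - 1|$ are bounded below on $\{|\realpart z| \leq r\}$. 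Both arguments are sound; the paper's makes the dependence of the bound on $r$ somewhat more explicit (via $r'$ and the winding count), while yours is a compactness argument that is shorter and avoids the more informal winding step. It would be worth noting explicitly (as you implicitly use) that the equivariance $\bol{h}^t(\bol{w} + 2n\pi i) = \bol{h}^t(\bol{w}) + 2n\pi i$ follows from uniqueness of path lifts applied to $t \mapsto \til{h}^t(\wE(\bol{w}))$ with the two initial conditions.
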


\begin{proof}
Let $\imag$ denote the imaginary axis, so that $[-r,r] \times \imag$ is the strip in the plane between the vertical lines through $r$ and $-r$.

By uniform continuity of $\til{h}$ and the fact that $\til{h}$ leaves $0$ and $1$ fixed, there must exist for each $r > 0$ an $r' > r$ such that for all $\bol{x} \in \bol{X}$, if $\bol{x}^s \in ((-\infty,-r'] \cup [r',\infty)) \times \imag$ for some $s \in [0,1]$ then for all $t \in [0,1]$, $\bol{x}^t \notin [-r,r] \times \imag$.

Given a point $\bol{x} \in \bol{A} \cap ([-r',r'] \times \imag)$, let $\til{x} = \wE(\bol{x})$ be the corresponding point of $\til{X}$.  Every time $\bol{x}$ travels vertically within the strip $[-r',r'] \times \imag$ a distance $2\pi$, the point $\til{x}$ travels around a disk of fixed radius (depending on $r'$) centered at $0$ or at $1$.  By uniform continuity and compactness of $X$, this can only happen a uniformly bounded number of times.  The result follows.
\end{proof}

\begin{cor}
\label{compact in strip}
Let $\bol{C}$ is any component of $\bol{X}$.  Then for any $r > 0$ and any $t \in [0,1]$, the set $\bol{C}^t \cap \{x + yi: x \in [-r,r]\}$ is compact.
\end{cor}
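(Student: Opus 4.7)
The plan is to reduce the claim to boundedness of imaginary parts, which I then establish by contradiction using the behaviour of the exponential covering map $\wE$ at $\pm \infty$. First, since $\til{X}^t$ is compact and bounded, \cref{ball-like}(i) implies that $\bol{X}^t = \wE^{-1}(\til{X}^t)$ stays uniformly away from every pole $(2n+1)\pi i$ of $\wE$, so $\bol{X}^t$ is closed in all of $\complex$. Its component $\bol{C}^t$ is therefore closed in $\complex$, and so is its intersection with the closed vertical strip $S_r = \{x+yi : x \in [-r,r]\}$. Since real parts are bounded by $r$, the claim reduces to bounding the imaginary parts of $\bol{C}^t \cap S_r$.

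To obtain this bound, I would exploit the fact that at $t = 0$ the component $\bol{C}$ is confined to a single horizontal strip $\bol{HS}_n$ (because $\bol{X}$ avoids the horizontal lines $\wE^{-1}((0,1))$, as used in the proof of \cref{uniqueB_n}), so its imaginary parts lie in the bounded interval $(2n\pi,\,2(n+1)\pi)$. Suppose toward a contradiction that there is a sequence $\bol{z}_i \in \bol{C}^t \cap S_r$ with $|\imagpart(\bol{z}_i)| \to \infty$, and set $\bol{w}_i = (\bol{h}^t)^{-1}(\bol{z}_i) \in \bol{C}$. If the real parts $\realpart(\bol{w}_i)$ are bounded, then $\{\bol{w}_i\}$ lies in a bounded subset of the closed set $\bol{C}$; a convergent subsequence $\bol{w}_i \to \bol{w}_* \in \bol{C}$, together with continuity of $\bol{h}^t$, forces $\bol{z}_i \to \bol{w}_*^t$, contradicting $|\imagpart(\bol{z}_i)|\to\infty$. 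Otherwise, after passing to a subsequence we may assume $\realpart(\bol{w}_i) \to +\infty$ (the case $-\infty$ is symmetric); from $\wE(z) = 1/(1+e^{-z})$ we see $\wE(\bol{w}_i) \to 1$ in $\til{X}$. Since $\til{h}^t$ is continuous and fixes $1$ by \cref{sizes}(i), $\wE(\bol{z}_i) = \til{h}^t(\wE(\bol{w}_i)) \to 1$. But for any $z$ with $|\realpart(z)|\leq r$ we have $|\wE(z)-1| = 1/|e^z+1| \geq 1/(e^r+1)$, contradicting $\wE(\bol{z}_i) \to 1$. The symmetric case $\realpart(\bol{w}_i) \to -\infty$ uses that $\til{h}^t$ fixes $0$ and that $|\wE(z)| \geq e^{-r}/(e^r+1)$ on $S_r$.

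The only step requiring care is the legitimacy of the continuous inverse $(\bol{h}^t)^{-1}$ on the noncompact set $\bol{X}^t$; this is acceptable because $\bol{h}$ is, by construction, the lift of the isotopy $\til{h}$ through the covering map $\wE$, so each $\bol{h}^t$ is an embedding of $\bol{X}$ onto $\bol{X}^t$ with continuous inverse. Once this is granted, every remaining step is either a direct appeal to the formula for $\wE$ or a citation of \cref{sizes} and \cref{ball-like}, with no further geometry required.
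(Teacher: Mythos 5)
Your proof is correct, but it takes a genuinely different route from the paper's. The paper deduces the corollary from \cref{lem:vertical strip} plus a periodicity trick: since $\bol{X}$ is $2\pi i$-periodic, one translates $\bol{C}$ to a copy $\bol{D}$ lying in $\bol{B}$, applies the one-sided bound of \cref{lem:vertical strip} to $\bol{C}^t \subset \bol{A}^t$ (bounded below in the strip) and to $\bol{D}^t \subset \bol{B}^t$ (bounded above), and then transfers the upper bound back to $\bol{C}^t$ by the equivariance of $\bol{h}^t$ under the deck translation. You instead argue directly on the single component, via the explicit formula for $\wE$: if the imaginary parts of $\bol{C}^t$ were unbounded in the strip $S_r$, pulling the offending sequence back through $(\bol{h}^t)^{-1}$ yields either a convergent subsequence in $\bol{C}$ (impossible by continuity of $\bol{h}^t$) or a subsequence with real parts going to $\pm\infty$, whose $\wE$-images converge to $1$ or $0$; continuity of $\til{h}^t$ at its fixed points then forces $\wE(\bol{z}_i) \to 1$ or $0$, which is incompatible with the uniform lower bounds $|\wE(z)-1|\geq 1/(e^r+1)$ and $|\wE(z)|\geq e^{-r}/(e^r+1)$ on $S_r$. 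Your approach is self-contained and does not invoke \cref{lem:vertical strip}; the paper's approach is slightly shorter here because that lemma has already been established (and is reused later, e.g.\ in \cref{connM}), and because the periodicity trick packages the two one-sided bounds efficiently. The supporting observations you supply -- that $\bol{X}^t = \wE^{-1}(\til{X}^t)$ is closed because $\til{X}^t$ is bounded away from $\{0,1\}$ and hence $\bol{X}^t$ avoids the poles, that $\bol{C}$ sits in a single horizontal strip $\bol{HS}_n$, and that $\bol{h}^t$ is a homeomorphism onto $\bol{X}^t$ with continuous inverse by covering-space lifting -- are all correct and adequately justified.
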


\begin{proof}
Because the set $\bol{X}^t$ is periodic with period $2\pi i$, there exists an integer $k$ such that if $\bol{D}$ is the copy of $\bol{C}$ shifted vertically by $2\pi k$, then without loss of generality $\bol{C} \subset \bol{A}$ and $\bol{D} \subset \bol{B}$.  Then by \cref{lem:vertical strip}, $\bol{C}^t$ is bounded below in the strip $\{x + yi: x \in [-r,r]\}$, and $\bol{D}^t$ is bounded above in this strip.  By periodicity, it follows that $\bol{C}^t$ is also bounded above in this strip.
\end{proof}

\begin{defn}
Given two distinct components $\bol{C},\bol{D}$ of $\bol{X}$ which are both unbounded to the right (respectively, to the left), we say that $\bol{C}$ \emph{lies above} $\bol{D}$ if there is some $R > 0$ such that for all $x \in \real$ with $x \geq R$ (respectively, $x \leq -R$), $\max(y \in \real: x + iy \in \bol{C}) > \max(y \in \real: x + iy \in \bol{D})$ and also $\min(y \in \real: x + iy \in \bol{C}) > \min(y \in \real: x + iy \in \bol{D})$.
\end{defn}

Note that it follows immediately from the definition  of $\mathfrak{A}^0$ and $\mathfrak{B}^0$ that if $\bol{C}$ and $\bol{D}$ are components of $\mathfrak{A}^0$ and $\mathfrak{B}^0$, respectively, which are unbounded on the same side, then $\bol{C}$ lies above $\bol{D}$.  The following Lemma follows from this fact.  The proof, which is left to the reader, is very similar to the proof of Lemma 2.5 in \cite{ot10}.

\begin{lem}
\label{lem:dichotomy stable}
There exists $\delta > 0$ such that if the crosscut $Q$ of $X$ with endpoints $0$ and $b$ has diameter $\diam(Q) < \delta$, then the following holds for the induced isotopy $\bol{h}$ of $\bol{X}$:

Let $\bol{C}$ and $\bol{D}$ be components of $\mathfrak{A}^0$ and $\mathfrak{B}^0$, respectively,  which are both unbounded to the same side. Then $\bol{C}^t$ lies above $\bol{D}^t$ for all $t \in [0,1]$.

Consequently, if $\bol{E}$ and $\bol{F}$ are components of $\bol{A}$ and $\bol{B}$, respectively, which are both unbounded to the same side, then $\bol{E}^t$ lies above $\bol{F}^t$ for all $t \in [0,1]$.
\end{lem}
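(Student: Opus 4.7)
The plan is to propagate the trivial ``lies above'' relation at $t=0$ to all $t\in[0,1]$ via a continuity argument combined with the persistent disjointness of the relevant sets. At $t=0$, after shrinking $\delta$ so that $|\Theta(b)|/2 < K < \pi$, the set $\mathfrak{A}^0$ lies in the open upper half-plane and $\mathfrak{B}^0$ in the open lower half-plane (since $\bol{X}\cap\real=\emptyset$ and the balls $\overline{\bol{E}_n(|\Theta(b)|/2)}$ do not cross $\real$), so the base case is immediate. Moreover, $\mathfrak{A}^t\cap\mathfrak{B}^t=\emptyset$ for every $t$: the closed balls contributing to $\mathfrak{A}^t$ and $\mathfrak{B}^t$ are centered at least $2\pi$ apart with radii less than $\pi$, and by \cref{uniqueB_n}(ii) no component of $\bol{A}^t$ lying in $\bol{HS}_n$ meets a closed ball in $\bol{HS}_m$ for $m\neq n$. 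Hence $\bol{C}^t\cap\bol{D}^t=\emptyset$ throughout the isotopy, and both $\bol{C}^t$ and $\bol{D}^t$ remain unbounded to the right by the last part of \cref{uniqueB_n}.

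Next I would fix $x_0$ large (to be chosen uniformly in $t$ below) and consider the two functions
\[ \phi^+(t) = \max_{y}\bigl(\bol{C}^t\cap\{\realpart(z)=x_0\}\bigr) - \max_{y}\bigl(\bol{D}^t\cap\{\realpart(z)=x_0\}\bigr), \]
\[ \phi^-(t) = \min_{y}\bigl(\bol{C}^t\cap\{\realpart(z)=x_0\}\bigr) - \min_{y}\bigl(\bol{D}^t\cap\{\realpart(z)=x_0\}\bigr), \]
where both vertical-line intersections are compact by \cref{compact in strip} and are nonempty because both components are unbounded to the right. If $\phi^+(t_0)=0$ (respectively $\phi^-(t_0)=0$) for some $t_0$, the point realizing the common extremum would belong to both $\bol{C}^{t_0}$ and $\bol{D}^{t_0}$, contradicting disjointness; thus $\phi^{\pm}$ is nowhere zero. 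Since $\phi^{\pm}(0)>0$, once I establish that $\phi^{\pm}$ is continuous in $t$, the intermediate value theorem forces $\phi^{\pm}>0$ for all $t\in[0,1]$, yielding ``$\bol{C}^t$ lies above $\bol{D}^t$'' with common witness $R=x_0$.

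The hard part is verifying this continuity, which reduces to Hausdorff-continuity of $t\mapsto\bol{C}^t\cap\{\realpart(z)=x_0\}$ and $t\mapsto \bol{D}^t\cap\{\realpart(z)=x_0\}$. The delicate step is a uniform estimate restricting the preimage: I must show that $(\bol{h}^t)^{-1}\bigl(\{\realpart(z)\leq x_0+1\}\bigr)\cap\bol{C}$ is contained in a fixed compact set $\bol{C}\cap\{\realpart(z)\leq N\}$ uniformly over $t\in[0,1]$ (with a symmetric bound on the left). This uses the fact that $\til{h}^t$ fixes both $0$ and $1$: by compactness of $\til{X}\times[0,1]$ and continuity of $\til{h}$, for every $\eta>0$ there exists $\delta_0>0$ so that $|\til{x}-1|<\delta_0$ implies $|\til{h}^t(\til{x})-1|<\eta$ uniformly in $t$, and one checks directly from the formula for $\wE$ that $\wE^{-1}(B(1,\eta))$ is contained in a right half-plane $\{\realpart(z)>c(\eta)\}$ with $c(\eta)\to\infty$ as $\eta\to 0$. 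This keeps far-right points of $\bol{C}$ far to the right for all $t$, so only the compact piece $\bol{C}\cap\{\realpart(z)\leq N\}$ ever contributes; on this piece $\bol{h}$ is uniformly continuous in $(\bol{x},t)$, which delivers the required Hausdorff continuity.

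The consequence for components $\bol{E},\bol{F}$ of $\bol{A},\bol{B}$ follows by the same argument applied directly to $\bol{E},\bol{F}$ in place of $\bol{C},\bol{D}$: these are components of $\bol{X}$ (since $\bol{X}\cap\real=\emptyset$ forces each component of $\bol{X}$ to lie entirely above or entirely below $\real$), so $\bol{h}^t$ maps them to disjoint components $\bol{E}^t, \bol{F}^t$ of $\bol{X}^t$, both unbounded to the right for all $t$, and the continuity/IVT argument goes through verbatim.
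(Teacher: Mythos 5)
The high-level plan is reasonable, but the continuity step on which the whole argument rests has a genuine gap, and it is not a minor technicality.

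You define $\phi^\pm(t)$ in terms of $\max$ and $\min$ of the $y$-coordinate over the fixed vertical line $\ell_{x_0}=\{\realpart(z)=x_0\}$, and you justify continuity of $\phi^\pm$ by arguing that a compact truncation of $\bol{C}^t$ varies Hausdorff-continuously. But Hausdorff continuity of $\bol{C}^t$ (truncated) does \emph{not} pass to Hausdorff continuity of the slice $\bol{C}^t\cap\ell_{x_0}$. Intersections with a fixed line are only upper semicontinuous: points of $\bol{C}^t\cap\ell_{x_0}$ can vanish discontinuously. Concretely, if at time $t_0$ the set $\bol{C}^{t_0}$ has a narrow finger whose tip grazes $\ell_{x_0}$ at a high altitude, then for $t$ near $t_0$ the finger may pull away from the line entirely, and $\max_y(\bol{C}^t\cap\ell_{x_0})$ drops discontinuously. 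Nothing in the hypotheses (a lifted plane isotopy of a compactum) rules such fingers out. Therefore $\phi^\pm$ is only a difference of upper semicontinuous functions, which in general is neither upper nor lower semicontinuous, and the intermediate value step fails: $\phi^\pm$ could change sign without ever equalling $0$. Your observation that $\phi^\pm(t)\neq 0$ (a common vertical extremum would force $\bol{C}^{t}\cap\bol{D}^{t}\neq\emptyset$) is correct but does not help once continuity is lost.

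Two secondary remarks. First, even granting continuity, you derive the inequalities only at the single abscissa $x_0$; the definition of ``lies above'' requires them for \emph{all} $x\geq R$. That part is repairable (run the argument for each $x\geq R_0$, since the relation holds at $t=0$ for all such $x$), so it is not the essential obstruction, but the write-up should not claim the conclusion ``with common witness $R=x_0$.'' Second, the fix has to be topological rather than pointwise: one should argue that the disjoint families $\mathfrak{A}^t,\mathfrak{B}^t$ (whose components are all unbounded and have a well-defined vertical order at $t=0$) cannot exchange order without a crossing, e.g.\ by tracking a separating arc or by a Jordan-curve separation argument as in \cite[Lemma~2.5]{ot10}, to which the paper defers. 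The maximum/minimum-on-a-line functions are simply not the right invariants to carry through the isotopy.
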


\subsection{Equidistant set between $\bol{A}^t$ and $\bol{B}^t$}
\label{sec:equi path}

For the remainder of this section, we assume that $\delta > 0$ is chosen so that the conclusions of \cref{uniqueB_n} and \cref{lem:dichotomy stable} hold.  We also assume that the crosscut $Q$ has diameter less than $\delta$.

Recall that disjoint closed sets $A_1$ and $A_2$ in $\complex$  are \emph{non-interlaced} if whenever $B(c,r)$ is an open disk contained in the complement of $A_1 \cup A_2$, there are disjoint arcs $C_1,C_2 \subset \partial B(c,r)$ such that $A_1 \cap \partial B(c,r) \subset C_1$ and $A_2 \cap \partial B(c,r) \subset C_2$.  We allow for the possibility that $C_1 = \0$ in the case that $A_2 \cap \partial B(c,r) = \partial B(c,r)$, and vice versa.

\begin{lem}
\label{lem:noninterlaced}
$\bol{A}^t$ and $\bol{B}^t$ are non-interlaced for all $t \in [0,1]$.
\end{lem}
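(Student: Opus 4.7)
The plan is to prove the lemma by contradiction: suppose some open disk $B(c,r) \subset \complex \setminus (\bol{A}^t \cup \bol{B}^t)$ witnesses an interlacing, providing four points $a_1, b_1, a_2, b_2 \in \partial B(c,r)$ in cyclic order with $a_i \in \bol{A}^t$ and $b_j \in \bol{B}^t$. I would then let $\bol{C}_i \subset \bol{A}^t$ and $\bol{D}_j \subset \bol{B}^t$ denote the components of $\bol{X}^t$ containing these four boundary points, obtaining four pairwise disjoint connected closed subsets of $\complex \setminus B(c,r)$. The case $t = 0$ is immediate, since by definition $\bol{A}^0$ lies in the open upper half-plane and $\bol{B}^0$ in the open lower half-plane, so the real axis $\real$ already separates them, so I would focus on $t > 0$.

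Next, I would split into cases according to whether the four components are bounded or unbounded in $\complex$. By \cref{compact in strip}, any unbounded component of $\bol{X}^t$ must extend horizontally, either to the left or to the right, since its intersection with any vertical strip is compact. If all four of $\bol{C}_1, \bol{C}_2, \bol{D}_1, \bol{D}_2$ are bounded, I would pass to the Riemann sphere $S^2$, where these components all lie in the closed topological disk $S^2 \setminus B(c,r)$. A classical planar topology argument (via Janiszewski's theorem, or a direct Jordan-type separation on the disk) asserts that pairwise disjoint continua in a closed disk cannot realize four boundary points in an alternating cyclic pattern; this would yield an immediate contradiction.

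When some of the four components are unbounded, the asymptotic configuration of their tails becomes the key input. By \cref{lem:dichotomy stable}, any unbounded $\bol{A}^t$-component lies strictly above any unbounded $\bol{B}^t$-component on the same horizontal side; combined with the constraint from \cref{uniqueB_n} that the tails avoid the forbidden balls $\bol{E}_m$, this produces a definite vertical ordering at $\pm\infty$ in which all $\bol{A}^t$-tails sit above all $\bol{B}^t$-tails. Viewed on the sphere, four pairwise disjoint arcs from four cyclically ordered boundary points of a disk to a single interior point (here, $\infty$) must preserve the cyclic order at that interior point; hence the alternating pattern $a_1, b_1, a_2, b_2$ combined with the clumped vertical ordering would force two tails to cross, contradicting their pairwise disjointness. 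The mixed case, in which only some of the four components are unbounded, is handled by the same planar separation reasoning on $S^2$, treating each bounded component as a continuum hanging off $\partial B(c,r)$ and each unbounded one as a continuum escaping to $\infty$.

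The main obstacle I anticipate is making the planar topology rigorous when the components are merely connected and closed, rather than arc-connected or locally connected, since the $\bol{C}_i$ and $\bol{D}_j$ need not be path-like. This will require a careful appeal to classical continuum-theoretic separation results (for example Janiszewski's theorem on the sphere, or the property that a continuum joining two boundary points of a disk separates the disk) in place of the more intuitive Jordan curve theorem for arcs.
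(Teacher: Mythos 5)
Your overall strategy — contradiction via the cyclic-interlacing picture, passing to $S^2$, and invoking \cref{lem:dichotomy stable} on tails — is in the right spirit, but there is a genuine gap in the ``all four components bounded'' case. You assert that ``pairwise disjoint continua in a closed disk cannot realize four boundary points in an alternating cyclic pattern,'' but no such obstruction exists: one can attach four tiny disjoint arcs to four cyclically alternating boundary points of $\partial B(c,r)$ with no contradiction whatsoever. The separation argument only produces a contradiction when the two $\bol{A}$-pieces can be joined into a single continuum and the two $\bol{B}$-pieces likewise, so that a connected set containing $\bol{b}_1$ and $\bol{b}_2$ is forced to cross a disjoint closed set separating them. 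Taking components of $\bol{X}^t$ directly gives you no control over this, so the bounded case (and hence the mixed case) collapses.

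The paper sidesteps this exactly at the point where you ran into trouble: rather than taking components of $\bol{X}^t$, it takes components of the \emph{augmented} sets $\mathfrak{A}^t = \bol{A}^t \cup \bigcup_{n \ge 0} \overline{\bol{E}_n(\cdot)}$ and $\mathfrak{B}^t = \bol{B}^t \cup \bigcup_{n<0} \overline{\bol{E}_n(\cdot)}$. By \cref{uniqueB_n}, \emph{every} component of $\mathfrak{A}^t$ and $\mathfrak{B}^t$ is unbounded to the left or to the right, so there is no bounded case at all. The four components containing $\bol{a}_1,\bol{a}_2,\bol{b}_1,\bol{b}_2$ are then all unbounded, an arc $S$ in $\overline{B}$ joining $\bol{a}_1$ to $\bol{a}_2$ gives a connected set $\bol{A}_1\cup\bol{A}_2\cup S$ separating $\bol{B}_1$ from $\bol{B}_2$, and a short case analysis on the sides of unboundedness forces some $\bol{B}_i$ to lie above some $\bol{A}_j$, contradicting \cref{lem:dichotomy stable}. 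If you replace your components of $\bol{X}^t$ with components of $\mathfrak{A}^t, \mathfrak{B}^t$, your ``unbounded tails'' reasoning becomes the whole argument and your case split disappears.
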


\begin{proof}
Fix $t \in [0,1]$.  Let $B \subset \complex \sm (\bol{A}^t \cup \bol{B}^t)$ be a round open ball, and suppose for a contradiction that there exist points $\bol{a}_1,\bol{a}_2 \in \partial B \cap \bol{A}^t$ and $\bol{b}_1,\bol{b}_2 \in \partial B \cap \bol{B}^t$ such that the straight line segment $\overline{\bol{a}_1 \bol{a}_2}$ separates $\bol{b}_1$ and $\bol{b}_2$ in $\overline{B}$.  Let $\bol{A}_1$ and $\bol{A}_2$ be the components of $\bol{a}_1$ and $\bol{a}_2$, respectively, in $\mathfrak{A}^t$, and let $\bol{B}_1$ and $\bol{B}_2$ be the components of $\bol{b}_1$ and $\bol{b}_2$ in $\mathfrak{B}^t$.  Then $[\bol{A}_1 \cup \bol{A}_2] \cap [\bol{B}_1 \cup \bol{B}_2] = \0$ and by the remarks immediately following the definition of $\mathfrak{A}^t$ and $\mathfrak{B}^t$, each of these four components is either unbounded to the left or unbounded to the right.  Consider an arc $S$ in $\overline{B} \sm (\bol{B}_1 \cup \bol{B}_2)$ joining $\bol{a}_1$ and $\bol{a}_2$.  Then $\bol{A}_1 \cup \bol{A}_2 \cup S$ separates the plane into at least two components, and $\bol{B}_1$ and $\bol{B}_2$ must lie in different components of $\complex \sm (\bol{A}_1 \cup \bol{A}_2 \cup S)$.  It is then straightforward to see by considering cases that there exist $i,j \in \{1,2\}$ such that $\bol{B}_i$ lies above $\bol{A}_j$, a contradiction with \cref{lem:dichotomy stable}.
\end{proof}

For each $t \in [0,1]$, let $\bol{M}_t = \Equi(\bol{A}^t,\bol{B}^t)$.  In light of \cref{lem:noninterlaced}, $\bol{M}_t$ is a  $1$-manifold by \cref{thm:manifold}.

\begin{lem}
\label{M-disjoint}
For each $t \in [0,1]$ and each $n \in \mathbb{Z}$, $\bol{M}_t \cap \bol{E}_n\!\left( \frac{(1-\nu) |\Theta(b^t)|}{4\nu} \right) = \0$.

In particular, $\bol{M}_t \cap \bol{E}_n \!\left( \frac{|\Theta(b^t)|}{2} \right) = \0$.
\end{lem}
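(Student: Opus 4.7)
The plan is to show that every $\bol{z}$ in $\bol{E}_n(r)$, with $r := (1-\nu)|\Theta(b^t)|/(4\nu)$, is strictly closer to one of $\bol{A}^t, \bol{B}^t$ than to the other, so that $\bol{z}$ cannot lie in the equidistant set $\bol{M}_t = \Equi(\bol{A}^t,\bol{B}^t)$. The cases $n \geq 0$ and $n < 0$ are mirror images of one another under interchange of $\bol{A}$ and $\bol{B}$, so I will only treat $n \geq 0$. Writing $\beta = |\Theta(b^t)|$ for brevity, I first observe that each component of $\bol{X}$ lies in a single horizontal strip $\bol{HS}_m$, and since $\bol{HS}_m \subset \{y>0\}$ precisely when $m \geq 0$, the components forming $\bol{A}$ are exactly those in strips with $m \geq 0$, while the components forming $\bol{B}$ are exactly those in strips with $m \leq -1$.

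The argument then invokes \cref{uniqueB_n} in two complementary ways. First, the ``Furthermore'' clause of \cref{uniqueB_n} produces a component (either $\bol{L}_n$ or $\bol{R}_n$, uniformly in the choice) whose $t$-image meets $\bol{E}_n(\beta/2)$; since this component lies in $\bol{HS}_n \subset \{y>0\}$, it is contained in $\bol{A}$, and so picking any $\bol{a}$ in the intersection gives a point of $\bol{A}^t$ with $|\bol{a}-(2n+1)\pi i|<\beta/2$, from which the triangle inequality yields an \emph{upper bound} on the distance from $\bol{z}$ to $\bol{A}^t$ of the form $r+\beta/2$. Second, clause (ii) of \cref{uniqueB_n}, applied to every component of $\bol{X}$ contained in some $\bol{HS}_m$ with $m \neq n$, shows that no such component's $t$-image meets $\bol{E}_n(\beta/(2\nu))$. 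Because all components forming $\bol{B}$ live in strips $\bol{HS}_m$ with $m \leq -1 < n$, this immediately gives $\bol{B}^t \cap \bol{E}_n(\beta/(2\nu)) = \0$, so the triangle inequality produces a \emph{lower bound} on the distance from $\bol{z}$ to $\bol{B}^t$ of the form $\beta/(2\nu)-r$.

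A short algebraic check then confirms that the specific value $r=(1-\nu)\beta/(4\nu)$ is exactly the choice that balances these two bounds, with both sides collapsing to $(1+\nu)\beta/(4\nu)$; since both inequalities are strict (the upper bound because $\bol{z}$ lies in an open ball containing the chosen $\bol{a}$, the lower bound because $\bol{b}\in\bol{B}^t$ lies strictly outside $\bol{E}_n(\beta/(2\nu))$), this forces the distance from $\bol{z}$ to $\bol{A}^t$ to be strictly less than the distance to $\bol{B}^t$, so $\bol{z}\notin\bol{M}_t$. The ``in particular'' assertion follows from the inequality $(1-\nu)/(4\nu) > 1/2$, which is equivalent to $\nu < 1/3$ and hence holds under the standing convention on $\nu$; this gives $\bol{E}_n(\beta/2) \subset \bol{E}_n(r)$. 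I do not anticipate any serious obstacle: all of the geometric input has been packaged into \cref{uniqueB_n}, and what remains is essentially a triangle-inequality bookkeeping combined with the clean observation that the sign of $n$ determines which of $\bol{A},\bol{B}$ is the ``close'' set because horizontal strips do not straddle the real axis.
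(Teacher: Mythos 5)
Your proof is correct and takes essentially the same route as the paper's: fix $n \geq 0$, use the ``Furthermore'' clause of \cref{uniqueB_n} to produce a component of $\bol{A}$ whose $t$-image meets $\bol{E}_n(\beta/2)$, use clause (ii) to show $\bol{B}^t \cap \bol{E}_n(\beta/(2\nu)) = \0$, and balance the two triangle-inequality bounds at the common value $(1+\nu)\beta/(4\nu)$. The one small imprecision is your stated reason for strictness of the lower bound: clause (ii) only guarantees $|\bol{b}-(2n+1)\pi i|\geq\beta/(2\nu)$ (the ball is open, so a point of $\bol{B}^t$ could a priori sit on its boundary); the genuine source of strictness in \emph{both} bounds is that $\bol{z}$ lies in the open ball $\bol{E}_n(r)$, so $|\bol{z}-(2n+1)\pi i|=\rho<r$ strictly, whence $\operatorname{dist}(\bol{z},\bol{A}^t)<\rho+\beta/2$ and $\operatorname{dist}(\bol{z},\bol{B}^t)\geq\beta/(2\nu)-\rho>\beta/(2\nu)-r$. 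With that correction the argument is airtight and matches the paper.
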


\begin{proof}
Let $n \in \mathbb{Z}$ and assume that $n \geq 0$ (the case $n < 0$ proceeds similarly).  Since $0<\nu<\frac{1}{3}$,  $\frac{(1-\nu) |\Theta(b^t)|}{4\nu}>\frac{|\Theta(b^t)|}{2}$, so $\bol{E}_n \!\left( \frac{|\Theta(b^t)|}{2} \right) \subset \bol{E}_n \!\left( \frac{(1-\nu) |\Theta(b^t)|}{4\nu} \right)$.

By \cref{uniqueB_n}, there is a component $\bol{C}$ of $\bol{X}$ such that $\bol{C}^t \cap E_n \!\left( \frac{|\Theta(b^t)|}{2} \right) \neq \0$ for all $t \in [0,1]$.  Since $n \geq 0$, $\bol{C} \subset \bol{A}$.

On the other hand, given any component $\bol{D}$ of $\bol{B}$, we have by \cref{uniqueB_n}(i) that $\bol{D}^t \cap \bol{E}_n \!\left( \frac{|\Theta(b^t)|}{2\nu} \right) = \0$ for all $t \in [0,1]$.  Thus $\bol{B}^t \cap \bol{E}_n \!\left( \frac{|\Theta(b^t)|}{2\nu} \right) = \0$ for all $t \in [0,1]$.  It follows that any point $x \in \bol{E}_n \!\left( \frac{(1-\nu) |\Theta(b^t)|}{4\nu} \right)$, the distance from $x$ to $\bol{A}^t$ is less than $\frac{|\Theta(b^t)|}{2} + \frac{(1-\nu) |\Theta(b^t)|}{4\nu} = \frac{(1+\nu) |\Theta(b^t)|}{4\nu}$, while the distance from $x$ to $\bol{B}^t$ is a greater than $\frac{|\Theta(b^t)|}{2\nu} - \frac{(1-\nu) |\Theta(b^t)|}{4\nu} = \frac{(1+\nu) |\Theta(b^t)|}{4\nu}$.  Thus $\bol{M}_t \cap \bol{E}_n \!\left( \frac{(1-\nu) |\Theta(b^t)|}{4\nu} \right) = \0$ for all $n$.
\end{proof}

\begin{lem}
\label{connM}
For each $t$ the set $\bol{M_t}$ is a connected 1-manifold.  Moreover, the vertical projection of $\bol{M}_t$ to the real axis $\real$ is onto.
\end{lem}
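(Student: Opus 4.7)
The first assertion, that $\bol{M}_t$ is a $1$-manifold, is immediate from \cref{lem:noninterlaced} and \cref{thm:manifold}. The substantive content is the vertical projection onto $\real$ and the connectedness of $\bol{M}_t$.

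For the surjectivity, fix $x_0 \in \real$ and consider the continuous function $f(y) = d(x_0 + iy, \bol{A}^t) - d(x_0 + iy, \bol{B}^t)$. The plan is to show $f(y) \to -\infty$ as $y \to +\infty$ and $f(y) \to +\infty$ as $y \to -\infty$, so the intermediate value theorem gives a zero of $f$. The key input is the $2\pi i$-periodicity of the lifted isotopy: since $\bol{h}^t$ lifts $\til{h}^t$ starting at the identity, the quantity $\bol{h}^t(\bol{x} + 2\pi i) - (\bol{h}^t(\bol{x}) + 2\pi i)$ is a continuous $2\pi i\mathbb{Z}$-valued function of $t$ vanishing at $t = 0$, and so vanishes identically. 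Consequently the components $\bol{L}_n^t$ from \cref{uniqueB_n} satisfy $\bol{L}_n^t = \bol{L}_0^t + 2\pi n i \subseteq \bol{A}^t$ for $n \geq 0$, giving $d(x_0 + 2\pi n i, \bol{A}^t) \leq d(x_0, \bol{L}_0^t) =: d_0 < \infty$; the $1$-Lipschitz property then yields $d(x_0 + iy, \bol{A}^t) \leq d_0 + 2\pi$ for all $y > 0$. For $d(x_0 + iy, \bol{B}^t) \to \infty$, given $R > 0$ pick $R' > R$: by \cref{lem:vertical strip} the imaginary parts of $\bol{B}^t$ in the vertical strip $\{|\realpart(\cdot)| \leq |x_0| + R'\}$ are bounded above by a finite $u$, so for $y > u + R'$ every point of $\bol{B}^t$ lies at distance $> R'$ from $x_0 + iy$. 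The symmetric argument treats $y \to -\infty$.

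For the connectedness, my strategy is to show that $\complex \setminus \bol{M}_t$ has exactly two connected components, namely $U^+ = \{f < 0\}$ and $U^- = \{f > 0\}$. This implies $\bol{M}_t$ is connected, since two disjoint components $\bol{M}_1, \bol{M}_2$ of $\bol{M}_t$ would produce at least three components of $\complex \setminus \bol{M}_t$: the connected set $\bol{M}_2$ lies in one of the (at most two) components of $\complex \setminus \bol{M}_1$ and, as a nonempty $1$-manifold, further subdivides it. So it suffices to establish connectedness of $U^+$ and $U^-$. The surjectivity estimates give $\{y > Y\} \subseteq U^+$ for some $Y > 0$, a connected reservoir inside $U^+$. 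For any $p \in U^+$, the straight segment from $p$ to the nearest $a^* \in \bol{A}^t$ stays in $U^+$ (at $q$ on the segment with $s = |p-q|$, $d(q, \bol{A}^t) \leq d(p, \bol{A}^t) - s$ and $d(q, \bol{B}^t) \geq d(p, \bol{B}^t) - s$, so $f(q) \leq f(p) < 0$). From $a^*$ I route a path through $U^+ \cup \bol{A}^t$ up to the reservoir: the ball $\bol{E}_n(|\Theta(b^t)|/2)$ is connected, avoided by $\bol{M}_t$ by \cref{M-disjoint}, and contained in $U^+ \cup \bol{A}^t$ for $n \geq 0$ (by \cref{uniqueB_n}(ii) it meets only components of $\bol{X}^t$ lying in $\bol{HS}_n$, hence in $\bol{A}^t$); bounded components of $\bol{A}^t$ associated with $\bol{HS}_n$ meet this ball by \cref{uniqueB_n}(i), the distinguished unbounded component meets it by the final clause of \cref{uniqueB_n}, and the other unbounded component can be joined by far-left or far-right vertical $2\pi i$-shifts via periodicity. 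Vertical segments from $\bol{E}_n$ to $\bol{E}_{n+1}$ stay in $U^+$ by the same estimates, and for $n$ sufficiently large $\bol{E}_n \subseteq \{y > Y\}$. The symmetric argument treats $U^-$.

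The main obstacle I foresee is carrying out the connectedness argument rigorously; in particular, the case analysis for the ``second'' unbounded component and for bounded components in atypical positions requires carefully combining the periodicity with \cref{uniqueB_n} (pinning each component to a specific pole) and \cref{M-disjoint} (keeping $\bol{M}_t$ out of a neighborhood of each pole) to guarantee that the joining paths genuinely remain in $U^+$ (respectively $U^-$).
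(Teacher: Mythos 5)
Your proposal takes a genuinely different route from the paper's. The paper argues by contradiction: assuming a component $\bol{M}'$ of $\bol{M}_t$ has both ends unbounded to the same side, it combines \cref{lem:dichotomy stable} with \cref{compact in strip} to exhibit a component of $\bol{B}^t$ lying above a component of $\bol{A}^t$ in a complementary domain of $\bol{M}'$. You instead argue directly that the two open sides $U^\pm$ of the equidistant set are each path-connected, so $\complex \setminus \bol{M}_t$ has exactly two components and $\bol{M}_t$ is therefore connected, with surjectivity of the projection then falling out of the intermediate value theorem on vertical lines. Your key new ingredient is the $2\pi i$-periodicity of $\bol{h}^t$ (via $\wE$ and uniqueness of lifts), which lets the connectedness step bypass \cref{lem:dichotomy stable} and \cref{compact in strip} entirely (the dichotomy still enters once, through \cref{lem:noninterlaced}, to get the $1$-manifold structure).

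That said, as you yourself flag, the connectedness portion is not yet rigorous. First, the reservoir $\{y > Y\} \subseteq U^+$ does not follow from the surjectivity estimate as written: your bound $d(x_0 + iy, \bol{A}^t) \leq d(x_0, \bol{L}_0^t) + 2\pi$ has a constant depending on $x_0$, hence is not uniform along $\real$. Second, the routing through the balls $\bol{E}_n$ leaves the step joining the ``other'' unbounded component ``by far-left or far-right vertical $2\pi i$-shifts'' unjustified.

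Both gaps close with a single sharper appeal to periodicity. Since $\bol{h}^t$ commutes with $z \mapsto z + 2\pi i$, the set $\bol{X}^t$ is $2\pi i$-periodic; each component stays in its strip $\bol{HS}_n$, so $\bol{A}^t = \bol{X}^t \cap \{\imagpart > 0\}$ and $\bol{B}^t = \bol{X}^t \cap \{\imagpart < 0\}$. For $z$ with $\imagpart(z) \geq 2\pi$, let $\bol{b} \in \bol{B}^t$ be nearest $z$; the unique $k \geq 1$ with $\imagpart(\bol{b}) + 2\pi k \in (0, 2\pi)$ gives $\bol{a} = \bol{b} + 2\pi k i \in \bol{A}^t$ with the same real part and $0 < \imagpart(z) - \imagpart(\bol{a}) < \imagpart(z) - \imagpart(\bol{b})$, so $|z - \bol{a}| < |z - \bol{b}|$ and $z \in U^+$. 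Thus $\{\imagpart \geq 2\pi\} \subseteq U^+$, a uniform reservoir with $Y = 2\pi$. Moreover, for any $a^* \in \bol{A}^t$ the entire vertical segment $\{a^* + is : 0 \leq s \leq 2\pi\}$ lies in $U^+$: since $a^*$ and $a^* + 2\pi i$ both lie in $\bol{A}^t$, we have $d(a^* + is, \bol{A}^t) \leq \min(s, 2\pi - s)$, while $d(a^* + is, \bol{B}^t) > \imagpart(a^*) + s$, and $\min(s, 2\pi - s) < \imagpart(a^*) + s$ for all $s \in [0, 2\pi]$. So from any $p \in U^+$ walk by a straight segment to the nearest $a^* \in \bol{A}^t$ and then straight up to $a^* + 2\pi i$ in the reservoir; the balls $\bol{E}_n$ and the attendant case analysis are unnecessary. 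With these repairs your argument is complete, and arguably more transparent than the paper's contradiction.
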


\begin{proof}
Since by \cref{lem:noninterlaced} $\bol{A}^t$ and $\bol{B}^t$ are non-interlaced, by \cref{thm:manifold}, $\bol{M}_t$ is a 1-manifold which separates $\bol{A}^t$ from $\bol{B}^t$.  By \cref{M-disjoint}, $\bol{M}_t$ is disjoint from $\bigcup_n \bol{E}_n \!\left( \frac{|\Theta(b^t)}{2} \right)$ and,  hence, $\bol{M}_t$ separates $\mathfrak{A}^t$ from $\mathfrak{B}^t$ (recall that $\mathfrak{A}^t$ and $\mathfrak{B}^t$ were defined above \cref{lem:vertical strip}).  Since all components of $\mathfrak{A}^t$ and $\mathfrak{B}^t$ are unbounded, no component of $\bol{M}_t$ is a simple closed curve and every component is a copy of $\real$ with both ends converging to infinity.  By \cref{lem:vertical strip} each end of a component of $\bol{M}_t$ either converges to $-\infty$ or $+\infty$. Fix $t$ and let $\bol{M}'$ be a component of $\bol{M}_t$.  Note that for all $\bol{x} \in \bol{M}'$ there exists a set of points $\bol{A}_x^t \subset \bol{A}^t$ closest to $x$ and $\bol{B}_x^t \subset \bol{B}^t$ closest to $x$ and that $\bigcup_{\bol{x} \in \bol{M}'} \bol{A}^t_x$ and $\bigcup_{x \in \bol{M}'} \bol{B}^t_x$ are separated by the line $\bol{M}'$.  For $x \in \bol{M}'$, let $r_x$ denote the distance from $x$ to $\bol{A}_x^t$ (equivalently, to $\bol{B}_x^t$).

If both ends of $\bol{M}'$ are unbounded to the same side, say on the  left side, then $\complex \sm \bol{M}'$ has two complementary components $P$ and $Q$, with $P$ only unbounded to the left (see \cref{fig:Mt projection}).  Assume that $\bigcup_{\bol{x} \in \bol{M}'} \bol{A}^t_x \subset P$ (the case $\bigcup_{\bol{x} \in \bol{M}'} \bol{B}^t_x \subset P$ is similar).  Note that since $P$ contains no components of $\bol{A}^t$ which are unbounded to the right, $P$ must contain components of $\bol{A}^t$ which are unbounded to the left.

\begin{figure}
\begin{center}
\includegraphics{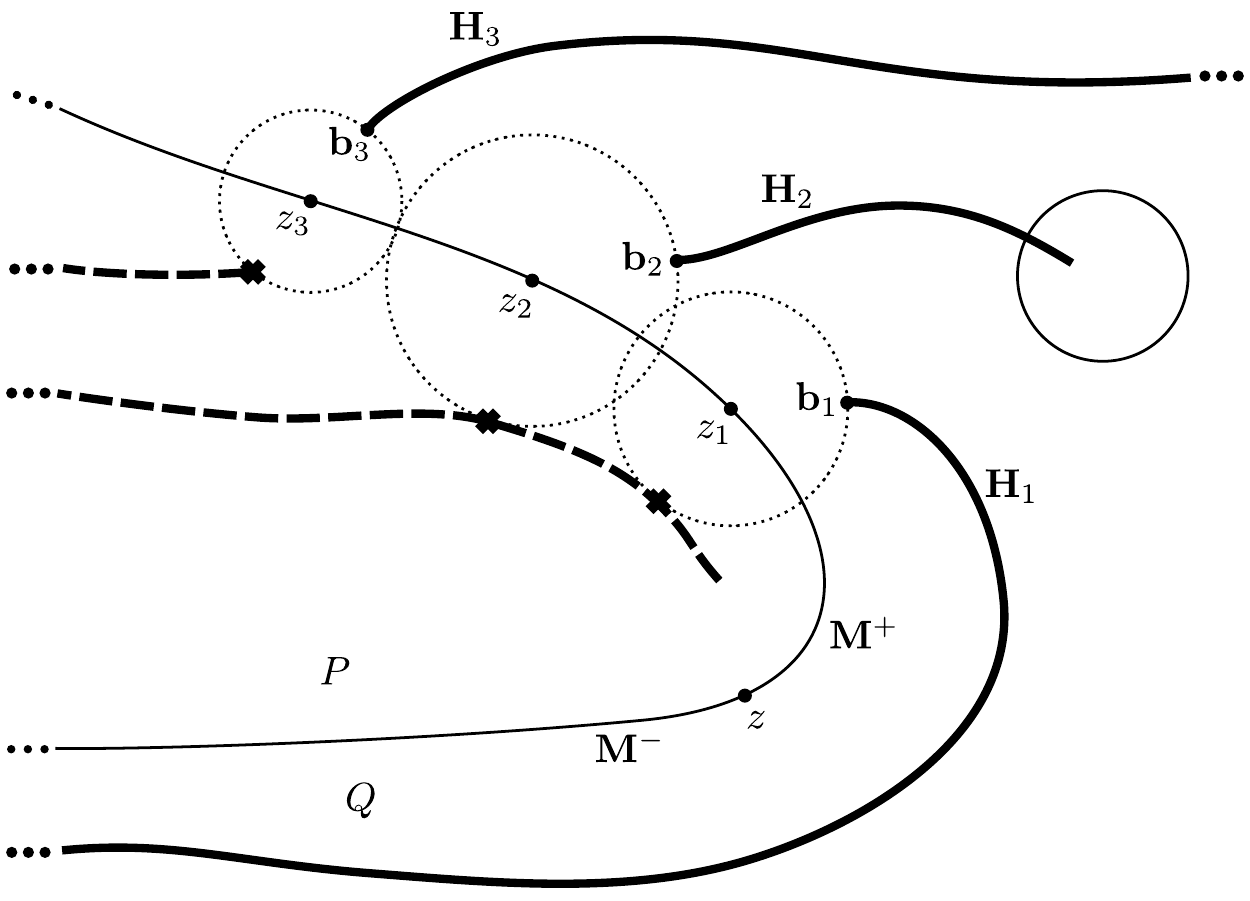}
\end{center}

\caption{An illustration of the situation described in the proof of \cref{connM}.}
\label{fig:Mt projection}
\end{figure}

Let $z \in \bol{M}'$.  Then $\bol{M}' \sm \{z\}$ consists of two rays $\bol{M}^+$ and $\bol{M}^-$ and we may assume that $\bol{M}^+$ lies above $\bol{M}^-$.  Choose $z_n \in \bol{M}^+$ monotonically converging to $-\infty$ and $\bol{b}_n \in \bol{B}^t_{z_n}$.  Since the radii $r_{z_n}$ are uniformly bounded, $\bol{b}_n$ also converges to $-\infty$.  Let $\bol{H}_n$ be the component of $\bol{B}^t$ that contains $\bol{b}_n$.

If $\bol{H}_n$ is unbounded to the left, by \cref{lem:dichotomy stable} it must lie below the unbounded components of $\bol{A}^t$ in $P$ and hence must ``go around'' $\bol{M}'$ as $\bol{H}_1$ does in Figure \cref{fig:Mt projection}.  If $\bol{H}_n$ is not unbounded to the left, then either it intersects some $\bol E_k(\frac{|\Theta(b^t)|}{2})$ for some $k < 0$ (as $\bol{H}_2$ does in \cref{fig:Mt projection}), or it is unbounded to the right (as $\bol{H}_3$ is in \cref{fig:Mt projection}).  In any case it is clear that there exists $c \in \real$ such that every component $\bol{H}_n$ intersects the vertical line $x = c$.

For each $n$ let $d_n$ be such that the point $(c,d_n) \in \bol{H}_n$.  By \cref{lem:vertical strip}, the sequence $d_n$ is bounded and, hence has an accumulation point $d_\infty$.  By \cref{compact in strip}, the component of $\bol{B}^t$ which contains $d_\infty$ is unbounded to the left, and clearly it lies above the unbounded components of $\bol{A}^t$ in $P$, a contradiction with \cref{lem:dichotomy stable}.  Hence, the vertical projection of $\bol{M}'$ to the real axis $\real$ is onto.

The proof that $\bol{M}_t = \bol{M}'$ is connected is similar and is left to the reader.
\end{proof}

\begin{lem}
\label{pathM}
For each $t \in [0,1]$, the set $\wE(\bol{M}_t) \cup \{0,1\}$ is the image of a path $\til{\gamma}_t$ in $\til{U}^t$ joining $0$ and $1$.
\end{lem}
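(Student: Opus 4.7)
The approach is to parameterize $\bol{M}_t$ as a proper embedding of $\real$ in the exponential plane whose real parts tend to $\pm\infty$ at its two ends, compose with $\wE$, and use the asymptotics of $\wE$ to identify the endpoints of the resulting path as $0$ and $1$. By \cref{connM}, $\bol{M}_t$ is a connected $1$-manifold whose vertical projection onto $\real$ is surjective. Since a $1$-manifold component is homeomorphic to $\real$ or to $\partial\disk$ and a circle component has bounded vertical projection, $\bol{M}_t \cong \real$. I would then fix a homeomorphism $\bol{\sigma}\colon \real \to \bol{M}_t$.

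Next I would show that $\bol{M}_t \cap ([-r,r]+i\real)$ is compact for every $r > 0$. By \cref{M-disjoint} the set $\bol{M}_t$ avoids the balls centered at the poles of $\wE$; by \cref{lem:vertical strip}, within the vertical strip $[-r,r]+i\real$ the set $\bol{A}^t$ is bounded below and $\bol{B}^t$ is bounded above. Since $\bol{M}_t$ separates $\mathfrak{A}^t$ from $\mathfrak{B}^t$, it is squeezed between them and its imaginary part is therefore bounded inside the strip, yielding compactness. Hence $|\real(\bol{\sigma}(s))|\to\infty$ as $|s|\to\infty$, and because $\proj_\real(\bol{M}_t) = \real$, the two ends must go to opposite infinities. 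After possibly reversing orientation, I may assume $\real(\bol{\sigma}(s))\to+\infty$ as $s\to+\infty$ and $\real(\bol{\sigma}(s))\to -\infty$ as $s\to -\infty$.

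Because $\bol{M}_t$ avoids all poles, $\wE \circ \bol{\sigma}$ is continuous on $\real$. Using $\lim_{\real(z)\to+\infty}\wE(z) = 1$ and $\lim_{\real(z)\to -\infty}\wE(z) = 0$ from \cref{sec:exp plane}, this map extends continuously to $[-\infty,+\infty]$ by sending $-\infty \mapsto 0$ and $+\infty \mapsto 1$. Reparameterizing $[0,1]\cong [-\infty,+\infty]$ then produces the path $\til{\gamma}_t$ with $\til{\gamma}_t(0) = 0$, $\til{\gamma}_t(1) = 1$, and $\til{\gamma}_t((0,1)) = \wE(\bol{M}_t)$.

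The main obstacle is verifying $\til{\gamma}_t((0,1)) = \wE(\bol{M}_t) \subset \til{U}^t$. The connected set $\wE(\bol{M}_t)$ is disjoint from $\til{X}^t$, so it lies in a single complementary domain of $\til{X}^t$; I must identify this domain as $\til{U}^t$. At $t = 0$, I would let $\bol{V}$ denote the component of $\complex \setminus \bol{X}$ containing the real axis; then $\wE(\bol{V})$ is the complementary domain of $\til{X}$ containing the crosscut $(0,1)$, which is $\til{U}$. A separation argument, exploiting that $\bol{V}$ is an open neighborhood of $\real$ and that $\bol{M}_0$ meets every vertical strip while avoiding $\bol{X}$, shows $\bol{M}_0 \cap \bol{V} \neq \emptyset$; connectedness of $\bol{M}_0$ then gives $\bol{M}_0 \subset \bol{V}$ and hence $\wE(\bol{M}_0) \subset \til{U}$. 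For general $t$, the inclusion propagates by continuity: $\bol{M}_t$ varies Hausdorff-continuously in $t$ (by stability of equidistant sets of non-interlaced families, using \cref{lem:noninterlaced}) and $\til{U}^t$ is tracked continuously via Carath\'eodory kernel convergence as set up in \cref{sec:tracking domains}.
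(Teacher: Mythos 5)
Your proposal matches the paper's proof: both parameterize $\bol{M}_t$ (homeomorphic to $\real$ by \cref{connM}) so that its two ends have $\realpart \to \pm\infty$ (using compactness of $\bol{M}_t$ in vertical strips, which rests on \cref{lem:vertical strip}), then compose with $\wE$ and use $\lim_{\realpart(z)\to\pm\infty}\wE(z)=1,0$ to append the endpoints $0$ and $1$. You additionally flesh out the inclusion $\wE(\bol{M}_t)\subset\til{U}^t$, which the paper leaves implicit; the only small caveat is that bounding $\bol{M}_t$ within a vertical strip really uses the equidistance property together with the $2\pi i$ semi-periodicity of $\bol{A}^t$ and $\bol{B}^t$, not merely that $\bol{M}_t$ separates $\mathfrak{A}^t$ from $\mathfrak{B}^t$, since within each strip those sets are unbounded above and below respectively.
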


\begin{proof}
Let $\imag$ denote the imaginary axis, so that $[-r,r] \times \imag$ is the strip in the plane between the vertical lines through $r$ and $-r$.  By \cref{lem:vertical strip}, for each $r > 0$, $\wE(([-r,r] \times \imag) \cap \bol{M}_t)$ is compact.  Together with \cref{connM}, this implies that we can choose a parameterization $\alpha: (0,1) \to \bol{M}_t$ so that:
\[ \lim_{s \to 0^+} \wE \circ \alpha(s) = \{0\} \]
and
\[ \lim_{s \to 1^-} \wE \circ \alpha(s) = \{1\} .\]
Define the path $\til{\gamma}_t: [0,1] \to \wE(\bol{M}_t) \cup \{0,1\}$ by $\til{\gamma}_t(s) = \wE \circ \alpha(s)$ for $s \in (0,1)$, and $\til{\gamma}_t(0) = 0$ and $\til{\gamma}_t(1) = 1$.  Then $\til{\gamma}_t$ is the required path.
\end{proof}

\subsection{Proof of \cref{main2}}
\label{sec:proof main2}

In this section we complete the proof of \cref{main2}.

Recall that $\e > 0$ is a fixed arbitrary number, and $0 < \nu < \frac{1}{3}$ has been chosen so that $\frac{8 \nu}{1 - \nu} < \frac{\e}{2}$.  Choose $0 < \delta < \frac{\e}{4}$ small enough so that the conclusions of \cref{uniqueB_n} and \cref{lem:dichotomy stable} hold (and therefore the results from \cref{sec:equi path} also hold).

For each $t \in [0,1]$, let $\gamma_t = (L^t \circ \Theta)^{-1} \circ \til{\gamma}_t$.  This $\gamma_t$ is a path in $U^t$ joining $0$ and $b^t$.

\begin{claim}
\label{paths small}
$\diam(\gamma_t([0,1])) < \e$ for all $t \in [0,1]$.
\end{claim}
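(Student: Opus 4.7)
The plan is to bound $\diam(\gamma_t([0,1]))$ by tracking the path $\gamma_t = (L^t \circ \Theta)^{-1} \circ \til{\gamma}_t$ back through the normalization, using the avoidance estimate for $\bol{M}_t$ near the poles (\cref{M-disjoint}) together with the size estimate for $\wE$ (\cref{ball-like}(ii)). Throughout the argument I will need to shrink $\delta$ a few more times; all such choices are compatible with the constraints already imposed on $\delta$ from \cref{uniqueB_n} and \cref{lem:dichotomy stable}.

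First I estimate $\til{\gamma}_t([0,1])$ in the normalized plane. By \cref{M-disjoint}, $\bol{M}_t$ is disjoint from $\bigcup_{n \in \mathbb{Z}} \bol{E}_n\!\left( \frac{(1-\nu)|\Theta(b^t)|}{4\nu} \right)$. Provided $\delta$ has been taken small enough that $\frac{(1-\nu)|\Theta(b^t)|}{4\nu} \leq K$ for all $t$ (with $K$ as in \cref{ball-like}), \cref{ball-like}(ii) gives
\[ \wE(\bol{M}_t) \subset B\!\left( 0, \frac{8\nu}{(1-\nu)|\Theta(b^t)|} \right) .\]
Since $\til{\gamma}_t([0,1]) = \wE(\bol{M}_t) \cup \{0,1\}$ by the construction in \cref{pathM}, this yields
\[ \til{\gamma}_t([0,1]) \subset B\!\left( 0, \max\!\left\{ 1, \tfrac{8\nu}{(1-\nu)|\Theta(b^t)|} \right\} \right) .\]

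Next I denormalize. Applying $(L^t)^{-1}$, which is multiplication by $\Theta(b^t)$, gives $\Theta(\gamma_t([0,1])) \subset B(0, \max\{|\Theta(b^t)|, \tfrac{8\nu}{1-\nu}\})$. Since $\Theta$ is the identity outside $B(0,2\delta)$ and is a homeomorphism of $\complex$, it maps $B(0,2\delta)$ onto itself, and hence so does $\Theta^{-1}$; therefore $\Theta^{-1}(B(0,r)) \subset B(0, \max\{r, 2\delta\})$ for every $r > 0$. Consequently
\[ \gamma_t([0,1]) \subset B\!\left( 0, \max\!\left\{ 2\delta,\ |\Theta(b^t)|,\ \tfrac{8\nu}{1-\nu} \right\} \right) ,\]
and since $\gamma_t(0) = 0$, $\diam(\gamma_t([0,1])) \leq 2\max\{2\delta,\ |\Theta(b^t)|,\ \tfrac{8\nu}{1-\nu}\}$.

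It remains to arrange that each of the three quantities is strictly less than $\e/2$. By the choice of $\nu$, $\tfrac{8\nu}{1-\nu} < \tfrac{\e}{2}$. By the standing assumption $\delta < \tfrac{\e}{4}$, so $2\delta < \tfrac{\e}{2}$. For the bound on $|\Theta(b^t)|$, I use that $h^t(0) = 0$ for all $t$ (the point $a = 0$ is fixed) together with uniform continuity of $h$ on $X \times [0,1]$: shrinking $\delta$ if necessary ensures $|b^t| < \tfrac{\e}{4}$ for all $t \in [0,1]$ whenever $|b| = |b - 0| < \delta$, and since $\Theta$ carries $B(0,2\delta)$ into itself one obtains $|\Theta(b^t)| \leq \max\{2\delta, |b^t|\} < \tfrac{\e}{2}$. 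Combining these gives $\diam(\gamma_t([0,1])) < \e$, as required. The only real bookkeeping obstacle is organizing the nested constraints on $\delta$ (from \cref{uniqueB_n,lem:dichotomy stable,ball-like}, together with the uniform continuity requirement) so that all the estimates fire simultaneously; there is no additional analytic difficulty.
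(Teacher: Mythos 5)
Your proof is correct and follows essentially the same route as the paper's: \cref{M-disjoint} gives disjointness of $\bol{M}_t$ from the balls $\bol{E}_n\!\left(\frac{(1-\nu)|\Theta(b^t)|}{4\nu}\right)$, \cref{ball-like}(ii) bounds $\wE(\bol{M}_t)$, and denormalization through $(L^t)^{-1}$ and $\Theta^{-1}$ finishes the estimate. The extra uniform-continuity step you introduce to bound $|\Theta(b^t)|$ is harmless but redundant: since $1 \in \overline{\wE(\bol{M}_t)}$ (by \cref{pathM}) and you have already established $\wE(\bol{M}_t) \subset B\!\left(0, \frac{8\nu}{(1-\nu)|\Theta(b^t)|}\right)$, it follows directly that $|\Theta(b^t)| \leq \frac{8\nu}{1-\nu} < \frac{\e}{2}$.
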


\begin{proof}[Proof of \cref{paths small}]
\renewcommand{\qedsymbol}{\textsquare (\cref{paths small})}
By \cref{M-disjoint}, for all $t \in [0,1]$ and $n \in \mathbb{Z}$, $\bol{M}_t \cap \bol{E}_n \!\!\left( \frac{(1-\nu) |\Theta(b^t)|}{4\nu} \right) = \0$.

By \cref{ball-like}(ii), we have $\wE(\bol{M}_t) \subset B \!\left( 0, \frac{8\nu}{(1-\nu) |\Theta(b^t)|} \right)$.  Then \\ $(L^t)^{-1}(\wE(\bol{M}_t)) \subset B \!\left( 0, \frac{8\nu}{(1-\nu)} \right)$.  By the choice of $\nu$, and since $\Theta$ is a homeomorphism of $\complex$ which is the identity outside of $B(0, 2\delta) \subset B(0, \frac{\e}{2})$, it then follows that $\gamma_t([0,1]) = (L^t \circ \Theta)^{-1}(\wE(\bol{M}_t)) \subset B(0, \frac{\e}{2})$.
\end{proof}

\begin{claim}
\label{paths cont}
The sets $\gamma_t([0,1])$ vary continuously in the Hausdorff metric, and $\gamma_0$ is homotopic to $Q$ with endpoints fixed.
\end{claim}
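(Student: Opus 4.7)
The plan is to verify the two parts of the claim separately: Hausdorff continuity of $t \mapsto \gamma_t([0,1])$, and homotopy of $\gamma_0$ to $Q$ with endpoints fixed.

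For the Hausdorff continuity, since $\gamma_t = (L^t \circ \Theta)^{-1} \circ \til{\gamma}_t$ and the scaling factor $\Theta(b^t)$ of $L^t$ depends continuously on $t$, it will suffice to show that $\wE(\bol{M}_t) \cup \{0,1\}$ varies Hausdorff-continuously in $t$. I would fix $t_0 \in [0,1]$ and $\eta > 0$. Using \cref{ball-like} and the asymptotic behavior of $\wE$ at horizontal infinity, I would choose $R > 0$ large enough that $\wE(\{z : |\realpart(z)| \geq R\}) \subset B(0,\eta/3) \cup B(1,\eta/3)$; then the tails of $\bol{M}_t$ outside the strip $V_R = \{z : |\realpart(z)| \leq R\}$ contribute at most $\eta/3$ to $\wE(\bol{M}_t) \cup \{0,1\}$ in Hausdorff distance, uniformly in $t$. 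The remaining piece $\bol{M}_t \cap V_R$ is compact by \cref{compact in strip} and \cref{lem:vertical strip}, and is completely determined by $\bol{A}^t$ and $\bol{B}^t$ restricted to a slightly larger bounded region $W$ (since distances from points of $V_R$ to $\bol{A}^t$ and $\bol{B}^t$ are uniformly bounded). I would then argue that $\bol{A}^t \cap W$ and $\bol{B}^t \cap W$ vary Hausdorff-continuously in $t$ using continuity of the lifted isotopy $\bol{h}$ and a uniform bound on how many components of $\bol{X}$ can meet $W$ with appreciable diameter, and apply the continuity of the equidistant-set construction on disjoint non-interlaced compact inputs (valid by \cref{lem:noninterlaced}) to conclude continuity of $\bol{M}_t \cap V_R$; post-composing with the uniformly continuous $\wE$ then finishes the compact part.

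For the homotopy, I would transfer through $L^0 \circ \Theta$ and reduce to showing that $\til{\gamma}_0$ is homotopic to the segment $[0,1]$ in $\til{U} \cup \{0,1\}$ with endpoints fixed. The exponential map $\wE$ lifts $(0,1)$ to the real axis $\real$ and, by construction, lifts the interior of $\til{\gamma}_0$ to the connected $1$-manifold $\bol{M}_0$. Both $\real$ and $\bol{M}_0$ are proper arcs in $\complex \setminus \bol{X}$ whose ends escape to $\pm\infty$ horizontally, each separating $\bol{A}$ from $\bol{B}$ in $\complex$. I would exhibit a proper homotopy between these two arcs inside $\complex \setminus \bol{X}$ with left and right ends paired up, and project it through $\wE$ to obtain the required homotopy in $\til{U}$, extending continuously to fix $0$ and $1$.

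The delicate step will be constructing the proper homotopy in the exponential plane: since $\bol{M}_0$ and $\real$ may cross one another many times, a naive straight-line homotopy can leave $\complex \setminus \bol{X}$. The approach I have in mind is to pass to the Riemann sphere, where $\bol{M}_0 \cup \{\infty\}$ and $\real \cup \{\infty\}$ are Jordan curves each separating $\bol{A}$ from $\bol{B}$, and then apply a Schoenflies-type argument to produce an ambient homeomorphism of $S^2$ carrying one to the other while fixing $\bol{A} \cup \bol{B}$. Isotoping this homeomorphism to the identity through maps supported in $\complex \setminus \bol{X}$ will yield the proper homotopy, which projects through $\wE$ to the desired homotopy rel endpoints.
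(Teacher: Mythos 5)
Your homotopy argument misses a very simple observation that the paper exploits, and the Schoenflies detour you propose has a problematic step. The key point: $\bol{A}^0$ lies strictly in the open upper half-plane and $\bol{B}^0$ strictly in the open lower half-plane (since $\bol{X}\cap\real=\0$). Consequently, for any $(x,y)\in\bol{M}_0$ with, say, $y>0$, the vertical segment from $(x,y)$ down to $(x,0)$ cannot meet $\bol{B}^0$ (it stays in the closed upper half-plane) and cannot meet $\bol{A}^0$ either: a point $(x,y')\in\bol{A}^0$ with $0<y'<y$ would give $\mathrm{dist}((x,y),\bol{A}^0)\le y-y'<y\le\mathrm{dist}((x,y),\bol{B}^0)$, contradicting $(x,y)\in\Equi(\bol{A}^0,\bol{B}^0)$. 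So the vertical compression $k_s(x,y)=(x,(1-s)y)$ is a homotopy from $\bol{M}_0$ to $\real$ inside $\complex\sm\bol{X}^0$, and $\wE\circ k$ is the desired endpoint-fixing homotopy from $\til{\gamma}_0$ to $(0,1)$. Your worry that ``a naive straight-line homotopy can leave $\complex\sm\bol{X}$'' is correct for a homotopy that linearly interpolates matched parameters, but the vertical compression is not that and is protected by the above estimate. By contrast, your Schoenflies route is both heavy and incomplete: producing an ambient homeomorphism of $S^2$ carrying one Jordan curve to the other \emph{while fixing $\bol{A}\cup\bol{B}$ pointwise} is not a standard Schoenflies output, and the step ``isotoping this homeomorphism to the identity through maps supported in $\complex\sm\bol{X}$'' is essentially an isotopy-extension assertion of the type the whole paper is trying to establish — it is not a routine lemma one can invoke here.

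For the Hausdorff continuity, your strip-plus-tails decomposition is plausible but leaves the ``continuity of the equidistant-set construction'' as an unproved ingredient — that is precisely the content one must supply. The paper argues more directly: $\limsup_i\bol{M}_{t_i}\subseteq\bol{M}_{t_\infty}$ is immediate from the definition of equidistant sets, and $\liminf_i\bol{M}_{t_i}\supseteq\bol{M}_{t_\infty}$ follows from \cref{connM}, since each $\bol{M}_t$ is a connected $1$-manifold whose vertical projection to $\real$ is onto; a connected set trapped into the single arc $\bol{M}_{t_\infty}$ while still projecting onto all of $\real$ must fill the whole arc. This makes the lower semicontinuity essentially free once \cref{connM} is in hand, and is the route you should follow.
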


\begin{proof}[Proof of \cref{paths cont}]
\renewcommand{\qedsymbol}{\textsquare (\cref{paths cont})}
By \cref{pathM}, $\til{\gamma}_t$ is a path in $\til{U}^t$ with endpoints $0$ and $1$.  To see that $\til{\gamma}_0$ is homotopic to $\til{Q} = (0,1)$ note first that since $\bol{A}^0$ is above the real axis and $\bol{B}^0$ is below the real axis, for each $(x,y) \in \bol{M}_0$ the vertical segment from $(x,0)$ to $(x,y)$ is disjoint from $\bol{X}^0$.  Hence we can construct a homotopy $k$ between $\bol{M}_0$ and $\real$ which fixes the x-coordinate of each point in $\bol{M}_0$ and decreases the absolute value of the $y$-coordinate to zero.  Then $\wE \circ k$ is the required homotopy between $\til{\gamma}_0$ and $\til{Q}$ with endpoints fixed.  Hence, $\gamma_0 = (L^0 \circ \Theta)^{-1} \circ \til{\gamma}_0$, is homotopic to $Q$ as required.

Suppose $t_i \to t_\infty$.  It is easy to see that $\limsup \bol{M}_{t_i} \subseteq \bol{M}_{t_\infty}$ by the definition of the equidistant sets $\bol{M}_t$.  Since, by \cref{connM}, each $\bol{M}_{t_i}$ and $\bol{M}_{t_\infty}$ is a connected $1$-manifold whose vertical projection to the real axis $\real$ is onto, it follows that $\liminf \bol{M}_{t_i} \supseteq \bol{M}_{t_\infty}$.  Thus $\lim \bol{M}_{t_i} = \bol{M}_{t_\infty}$.  It follows that $\gamma_t([0,1]) = (L^t \circ \Theta)^{-1} \circ \wE(\bol{M}_t)$ is continuous in the Hausdorff metric.
\end{proof}

Combined with \cref{liftexist}, Claims \ref{paths small} and \ref{paths cont} complete the verification of condition (ii) of \cref{main1technical}.  Therefore, by \cref{main1technical}, the isotopy $h$ of the compactum $X$ can be extended to the entire plane $\complex$.  This completes the proof of \cref{main2}.

\medskip
In \cref{main1} we have given necessary and sufficient conditions for an isotopy of a uniformly perfect compact set to extend to an isotopy of the plane.  These conditions involve the existence of an extension of the isotopy over sufficiently small crosscuts while controlling the size of the image.  The following problem remains open.

\bigskip
\begin{prob}
Are there intrinsic properties on $X$ and the isotopy $h$ of $X$, which do not involve the existence of extensions over small crosscuts, that characterize when an isotopy of $X$ can be extended over the plane?
\end{prob}

\bibliographystyle{amsalpha}
\bibliography{Isotopies}

\newcommand{\etalchar}[1]{$^{#1}$}
\providecommand{\bysame}{\leavevmode\hbox to3em{\hrulefill}\thinspace}
\providecommand{\MR}{\relax\ifhmode\unskip\space\fi MR }
\providecommand{\MRhref}[2]{%
  \href{http://www.ams.org/mathscinet-getitem?mr=#1}{#2}
}
\providecommand{\href}[2]{#2}
\begin{thebibliography}{BFM{\etalchar{+}}13}

\bibitem[ABO09]{aartbrouwover}
Jan Aarts, G.~Brouwer, and Lex~G. Oversteegen, \emph{Centerlines of regions in
  the sphere}, Topology Appl. \textbf{156} (2009), no.~10, 1776--1785.
  \MR{2519213}

\bibitem[Ahl73]{Ahlfors1973}
Lars~V. Ahlfors, \emph{Conformal invariants: topics in geometric function
  theory}, McGraw-Hill Book Co., New York-D\"usseldorf-Johannesburg, 1973,
  McGraw-Hill Series in Higher Mathematics. \MR{0357743}

\bibitem[AM01]{astamart01}
K.~Astala and G.~J. Martin, \emph{Holomorphic motions}, Papers on analysis,
  Rep. Univ. Jyv\"askyl\"a Dep. Math. Stat., vol.~83, Univ. Jyv\"askyl\"a,
  Jyv\"askyl\"a, 2001, pp.~27--40. \MR{1886611}

\bibitem[Bel76]{bell76}
Harold Bell, \emph{A correction to my paper: ``{S}ome topological extensions of
  plane geometry'' ({R}ev. {C}olombiana {M}at. {\bf 9} (1975), no. 3-4,
  125--153)}, Rev. Colombiana Mat. \textbf{10} (1976), no.~2, 93. \MR{0467697}

\bibitem[BFM{\etalchar{+}}13]{bfmot13}
Alexander~M. Blokh, Robbert~J. Fokkink, John~C. Mayer, Lex~G. Oversteegen, and
  E.~D. Tymchatyn, \emph{Fixed point theorems for plane continua with
  applications}, Mem. Amer. Math. Soc. \textbf{224} (2013), no.~1053, xiv+97.
  \MR{3087640}

\bibitem[Bro05]{brou05}
Gaston~Alexander Brouwer, \emph{Green's functions from a metric point of view},
  ProQuest LLC, Ann Arbor, MI, 2005, Thesis (Ph.D.)--The University of Alabama
  at Birmingham. \MR{2707805}

\bibitem[Car12]{cara12}
C.~Carath\'eodory, \emph{Untersuchungen \"uber die konformen {A}bbildungen von
  festen und ver\"anderlichen {G}ebieten}, Math. Ann. \textbf{72} (1912),
  no.~1, 107--144. \MR{1511688}

\bibitem[Com13]{Comerford2013}
Mark Comerford, \emph{The {C}arath\'eodory topology for multiply connected
  domains {I}}, Cent. Eur. J. Math. \textbf{11} (2013), no.~2, 322--340.
  \MR{3000648}

\bibitem[Fab05]{fabe05}
Paul Fabel, \emph{Completing {A}rtin's braid group on infinitely many strands},
  J. Knot Theory Ramifications \textbf{14} (2005), no.~8, 979--991.
  \MR{2196643}

\bibitem[Fat06]{Fatou06}
P.~Fatou, \emph{S\'eries trigonom\'etriques et s\'eries de {T}aylor}, Acta
  Math. \textbf{30} (1906), no.~1, 335--400. \MR{1555035}

\bibitem[Fre97]{fre97}
D.~H. Fremlin, \emph{Skeletons and central sets}, Proc. London Math. Soc. (3)
  \textbf{74} (1997), no.~3, 701--720. \MR{1434446}

\bibitem[Hej74]{hej74}
Dennis~A. Hejhal, \emph{Universal covering maps for variable regions}, Math. Z.
  \textbf{137} (1974), 7--20. \MR{0349989}

\bibitem[HOT17]{hot13}
L.~C. Hoehn, L.~G. Oversteegen, and E.~D. Tymchatyn, \emph{A canonical
  parameterization of paths in $\mathbb{R}^n$}, arXiv:1301.6070.

\bibitem[KP94]{kulkpink94}
Ravi~S. Kulkarni and Ulrich Pinkall, \emph{A canonical metric for {M}\"obius
  structures and its applications}, Math. Z. \textbf{216} (1994), no.~1,
  89--129. \MR{1273468}

\bibitem[Mor36]{Morse1936}
Marston Morse, \emph{A special parameterization of curves}, Bull. Amer. Math.
  Soc. \textbf{42} (1936), no.~12, 915--922. \MR{1563464}

\bibitem[OT82]{OT82}
Lex~G. Oversteegen and E.~D. Tymchatyn, \emph{Plane strips and the span of
  continua. {I}}, Houston J. Math. \textbf{8} (1982), no.~1, 129--142.
  \MR{666153}

\bibitem[OT10]{ot10}
Lex~G. Oversteegen and Edward~D. Tymchatyn, \emph{Extending isotopies of planar
  continua}, Ann. of Math. (2) \textbf{172} (2010), no.~3, 2105--2133.
  \MR{2726106}

\bibitem[Pom02]{pomm02}
Ch. Pommerenke, \emph{Conformal maps at the boundary}, Handbook of complex
  analysis: geometric function theory, {V}ol.\ 1, North-Holland, Amsterdam,
  2002, pp.~37--74. \MR{1966189}

\bibitem[PR98]{PR98}
Ch. Pommerenke and S.~Rohde, \emph{The {G}ehring-{H}ayman inequality in
  conformal mapping}, Quasiconformal mappings and analysis ({A}nn {A}rbor,
  {MI}, 1995), Springer, New York, 1998, pp.~309--319. \MR{1488456}

\bibitem[Rie23]{Riesz23}
Freidrich Riesz, \emph{\"uber die {R}andwerte einer analytischen {F}unktion},
  Math. Z. \textbf{18} (1923), no.~1, 87--95. \MR{1544621}

\bibitem[RR16]{Riesz16}
F.~Riesz and M.~Riesz, \emph{\"{U}ber die {R}andwerte einer analytischen
  {F}unction 4}, Cong. Scand. Math. Stockholm (1916), 87--95.

\bibitem[Sil69]{Silverman1969}
Edward Silverman, \emph{Equicontinuity and {$n$}-length}, Proc. Amer. Math.
  Soc. \textbf{20} (1969), 483--486. \MR{0237719}

\bibitem[Slo91]{slod91}
Zbigniew Slodkowski, \emph{Holomorphic motions and polynomial hulls}, Proc.
  Amer. Math. Soc. \textbf{111} (1991), no.~2, 347--355. \MR{1037218}

\bibitem[Thu09]{thur85}
William~P. Thurston, \emph{On the geometry and dynamics of iterated rational
  maps}, Complex dynamics, A K Peters, Wellesley, MA, 2009, Edited by Dierk
  Schleicher and Nikita Selinger and with an appendix by Schleicher,
  pp.~3--137. \MR{2508255}

\end{thebibliography}

\end{document}